\newcounter{Definition}
\theoremstyle{definition}
\newtheorem{definition}[Definition]{Definition}
\newtheorem{example}[Definition]{Example}
\newtheorem{remark}[Definition]{Remark}
\theoremstyle{plain}
\newtheorem{theorem}[Definition]{Theorem}
\newtheorem{proposition}[Definition]{Proposition}
\newtheorem{lemma}[Definition]{Lemma}
\newtheorem{corollary}[Definition]{Corollary}
\newcommand{\bR}{{\mathbb{R}}}
\newcommand{\bE}{{\mathbb{E}}}
\newcommand{\bP}{{\mathbb{P}}}
\newcommand{\bC}{{\mathbb{C}}}
\newcommand{\bN}{{\mathbb{N}}}
\newcommand{\cF}{{\mathcal{F}}}
\newcommand{\cS}{{\mathcal{S}}}
\newcommand{\cB}{{\mathcal{B}}}
\title{Chernoff approximations of Feller semigroups in Riemannian manifolds}
\author{Sonia Mazzucchi,$^{1}$ Valter Moretti,$^1$  Ivan Remizov,$^2$ Oleg Smolyanov$^{3,4}$}
\date{November 2020}
\begin{document}

\maketitle

{\footnotesize
$^1$ Department of Mathematics, University of Trento and TIFPA-INFN,
via Sommarive 14,
I-38123 Povo (Trento) 
Italy

$^2$ National Research University Higher School of Economics, Russian Federation

$^3$ Lomonosov Moscow State University, Faculty of Mechanics and Mathematics,
Chair of Real and Functional Analysis, Laboratory of Infinite-Dimensional Analysis and Mathematical Physics 

$^4$ Moscow Institute of Physics and Technology

Emails: sonia.mazzucchi@unitn.it; valter.moretti@unitn.it; ivremizov@yandex.ru; smolyanov@yandex.ru
}

\begin{abstract}{
Chernoff approximations of Feller semigroups and the associated diffusion processes in Riemannian manifolds are studied.
The manifolds  are assumed to be of bounded geometry, thus including
 all compact manifolds and also a wide range of non-compact manifolds.
Sufficient conditions are established for a class of second order elliptic operators to generate a Feller semigroup on a (generally non-compact) manifold of bounded geometry. A construction of Chernoff approximations is presented for these Feller semigroups in terms of shift operators. This provides approximations of solutions to initial value problems for parabolic equations with variable coefficients on the manifold.  It  also yields weak convergence of a sequence of random walks on the manifolds to the diffusion processes associated with the elliptic generator. For parallelizable  manifolds this result is applied in particular to the representation of Brownian motion on the manifolds as limits of the corresponding random walks.}
\end{abstract}. 

{\small
Keywords: one-parameter operator semigroups, Feynman formulas, Feynman-Kac formulas, Feller semigroups, Chernoff product formula, diffusion processes, evolution equations
\vskip3mm
MSC2010: primary 47D06 secondary 58J65 also related to 58A05, 35K15, 35C99, 41A99 
\tableofcontents
}

\section{Introduction}

The relations between, on the one hand, the evolution equation and semigroup theory and, on the other hand, functional integration and the theory of stochastic processes is an extensively studied topic 
\cite{Bal,EthKur,IkeWat,Kal,Kol,RogWil} with  a long history.  Its roots can be  traced back to the pioneering papers by  Richard Feynman \cite{f-1948, f-1951}, who proposed  an heuristic representation of the solution to the Schr\"odinger equation in terms of limits of integrals over finite Cartesian powers of some spaces. Feynman's ideas inspired Marc Kac \cite{Kac1949}, who rigorously proved a representation of the solution of the heat equation in terms of an integral on the space of continuous paths with respect to the Wiener measure. This formula, which is nowadays known as the celebrated "Feynman-Kac formula", is the first and most famous example of the connections between parabolic equations associated with second order elliptic operators and stochastic processes. 
Remarkably, 
Feynman heuristically presented two mathematical constructions which are now associated with names of Trotter \cite{Tro}
 and Chernoff \cite{Chernoff}, who rigorously    proved them  much later. 
Trotter and Chernoff formulas provide approximations of evolution (semigroups) that, in several cases, pave the way for the proof of representation formulas of Feynman-Kac type. 

In the present paper,  new Chernoff approximations  are established  for  a particular class of Feller semigroups on a type of generally non-compact  Riemannian manifolds.
 In addition,  these  formulas are also proved to have a nice probabilistic interpretation on the said class of manifolds, since they allow  the proof of the weak convergence of a sequence of random walks on the manifold  to
 the diffusion process associated with the elliptic operator generating the said Feller semigroups. \\

 \noindent {\bf Literature on the subject}.  
 From a general perspective, this work refers to the theory of some strongly continuous semigroups of linear operators $(V(t))_{t\in \bR^+}$ on the Banach space $C_0(\mathcal{M})$
 of continuous real-valued functions vanishing at $\infty$ on a locally compact metric space $\mathcal{M}$. Such semigroups are  called  {\em Feller semigroups}. 
They are naturally associated with strong Markov stochastic processes $(X^x(t))_{t\in \bR^+}$ with values in the one-point compactification of $\mathcal{M}$ in such a way 
that the action of the operators $V(t)$ on a function $f\in C_0(\mathcal{M})$ can be represented in terms of the following formula
$$(V(t)f)(x)=\bE[f(X^x(t))], \qquad x\in \mathcal{M},\: t\in \bR^+\:.$$
$\bE$ is the expected value.  This paper considers the  concrete case where $\mathcal{M}$ is a smooth Riemannian manifold $M$ and the generator of the Feller semigroup when restricted to the space
 $C_c^\infty(M)$ of smooth functions with compact support is given by the second-order differential operator
\begin{equation}\label{eqL_0-1}
    (L_0f)(x)=\frac{1}{2}\sum _{k=1}^r(A_kA_kf)(x)+A_0f(x), \qquad x\in M,
\end{equation}  
where $A_k$, $k=0, \ldots, r$ are smooth vector fields. The stochastic processes associated with this particular kind of Feller semigroups are named {\em Feller-Dynkin diffusions}. They have continuous paths and can be constructed in terms of the (martingale) solution of stochastic differential equations of the form  \cite{Elw,Hsu,IkeWat,Wang}
\begin{equation}\label{SPDEi}dX(t)=\sum_{j=1}^r A_j(X(t))\circ dB^j(t) +A_0(X(t))dt.\end{equation}
This work in particular is devoted to the application of the Chernoff theorem (see theorem \ref{FormulaChernova} below) to the construction of an approximation formula for, on the one hand, the Feller semigroup and, on the other hand, the associated diffusion process and solutions to the evolution equation. This technique has been extensively implemented, e.g. in the study of 
Chernoff approximations of
Feller semigroups (and corresponding Feller processes) \cite{Butko-DM2010, Butko-IDAQP2012, Butko-FCAA2018, Butko-SD2018}, in the construction of solutions to evolution equations \cite{BauConGro,BoOrSa,Butko-FPM2006}, and in the  construction of the Wiener measure on  compact manifolds  \cite{Baer,SWW2007}
(see for overviews \cite{Butko-2019,SmHist,SmSchrHist}). Most of the results presented in literature are restricted to the case where either $M=\bR^d$ or $M$ is compact.  More general classes of $C^k$ (with $k=1,2 \ldots, \infty$ depending on the case) Riemannian manifolds were studied in \cite{Jorgensen,Pinsky,Li} (see also \cite{Manton} for an introductory overview of Brownian motion and diffusion processes on manifolds).
 In those papers, generally speaking, conditions are assumed about  (a) the existence of a  specific  cover of open sets with both uniform metric properties and uniform bounds 
on the vector fields $\{A_k\}_{k=0, \ldots, r}$ associated to the dynamical system \eqref{SPDEi} and (b) the validity of specific bounds on some curvatures. Under these conditions  it is possible to prove  the existence of Feller semigroups associated to the differential operator \eqref{eqL_0-1}  as well as the non explosion property of the associated process \cite{Li}. In \cite{Jorgensen,Pinsky} similar conditions allow proving the convergence of geodesic random walks  to the Brownian motion  on the manifold. 
A recent  remarkable book on semigroups on $L^2(M)$ (instead of $C_0(M)$) for generally non-compact manifolds $M$  
and the special case of Schr\"odinger-like  operators 
is \cite{Gu}. There, heat kernels  are extensively studied for Schr\"odinger-like  operators on Hermitian bundles 
on generally non-compact base manifolds, extending many known  results valid in $\bR^n$ to these geometric structures.\\

\noindent {\bf Results of this work}. 
In contrast to the quoted literature, the present work focuses on  continuous semigroups on $C_0(M)$ with generators of the
 form (\ref{eqL_0-1}) for 
the case of  a generic smooth Riemannian  manifolds $(M,g)$ 
{\em of bounded geometry}, also requiring  uniform boundedness properties of the involved vector fields for general 
elliptic operators  (\ref{eqL_0-1}).
Manifolds of bounded geometry are for instance $\bR^d$, compact manifolds, and  a wide class of {\em non-compact} 
manifolds that are also relevant in applications, like Lie groups and homogeneous manifolds.  The main results of this
 work follow.

(a) As the first result,  in  Section \ref{sec3} we show that if the vector fields $\{A_k\}_{k=0,\ldots,r}$ enjoy a property known as
 $C^\infty-$boundedness \cite{shubin}, then an extension of the differential operator $L_0$ in \eqref{eqL_0-1} 
is the generator of a Feller-Dynkin semigroup on $C_0(M)$, and we provide a family of operator cores.  This result paves the way for 
the proof of theorem \ref{teo3.1}, the second result of this paper, where a Chernoff approximation formula (Eq. \eqref{convergenceformula}) for the Feller semigroup in terms of a family of rather simple shift operators is presented. The idea of using shift operators instead of integral operators on $\mathbb{R}^d$ 
goes back to \cite{R-JMP2019, R-AMC2018, RS-MN2018,R-PotAnonlinefirst2018} and is now applied to manifolds for the first time. 
We also extend the described results to more general operators $L_0+c$, where $c\leq 0$ is a   bounded continuous scalar potential.

(b)  The probabilistic interpretation of the approximation formulas \eqref{S(t)} 
and \eqref{convergenceformula} in the case of $c=0$ is discussed in
Section \ref{SecProbabilisticInterpretation}.
 There, as the third main result, we show that it allows us to construct the diffusion process associated to the Feller semigroup in terms of a weak limit of a sequence of random walks on $M$.   
Several interesting convergence results for diffusion processes on manifolds can be found in literature, see e.g. \cite{Mol,DeGaMa, Jorgensen,Pinsky,Li}. It is worth mentioning the approximation schemes for the Wiener measure proposed in \cite{AndDri,Baer}, the proof of convergence of random walks to Brownian motion on sub-Riemannian manifolds \cite{GorLae} and the recent application of the notion of controlled rough path to Riemannian manifolds \cite{DriSem}. 
In contrast to the above mentioned results, in particular \cite{Jorgensen,Pinsky,Li}, where only geodesic paths are used in $M$ so that the 2nd order ODE are relevant,
in this paper we provide three different approximation schemes associated to {\em 1st order} differential equations of curves in $M$. 
These equations are the ones of integral lines of the aforementioned vector fields $\{A_k\}_{k=0,\ldots,r}$.
Indeed, the first approximation scheme involves a sequence of jump processes with random jumps along integral curves of the vector fields $\{A_k\}_{k=0, \ldots, r}$.  
Notice that more than one vector field is necessary to change the direction of the random walk when dealing with vector fields in $M$ instead of geodesics.
The second approximation scheme is a sequence of random walks with continuous piecewise geodesic paths. Finally, the third approximation scheme involves a sequence of random walks with continuous paths where the single steps are integral curves of the vector fields $\{A_k\}_{k=0, \ldots, r}$.

(c) These techniques are eventually applied
 in section \ref{sez5}
 to the Chernoff approximation of the specific case of the heat
 semigroup and the Brownian motion on {\em parallelizable} Riemannian manifolds. In this context we acheive the final results presented in this work.
As noted above, besides the traditional
 approximation of Brownian motion in terms of the weak limit of a sequence of random walks with piecewise geodesic 
paths (theorem \ref{teo43}), we  provide a new approximation result in terms of the limit of random walks with
 paths along the integral curves of a family of parallelizing vector fields (theorem \ref{teo44}).\\

\noindent {\bf Structure, notations, and conventions}.  The paper is organized as follows. Section \ref{sez2} presents 
some basic definitions and results on Feller semigroups, Chernoff approximations  and Riemannian
 geometry notions that are used throughout the paper. Section \ref{sec3}  presents the construction 
of the Feller semigroup and its Chernoff approximation. Section \ref{SecProbabilisticInterpretation} is 
devoted to the probabilistic interpretation of the Chernoff approximation formula and to the construction 
of three different sequences of random walks on $M$ converging weakly to the diffusion process associated to
 the Feller semigroup. Finally, section \ref{sez5} extends these results to the study of approximations of the heat 
semigroup and the Brownian motion on parallelizable manifolds of bounded geometry. The appendix contains the 
proofs of several technical propositions used in the main text.

 From now on the notation  $A\subset B$ includes the case $A=B$ and,
referring to a universe set $\mathcal{M}$,  if $A \subset \mathcal{M}$, then  $A^c := \mathcal{M} \setminus A$. 
Throughout the paper we adopt the definition $\bR^+ := [0,+\infty)$. 
If $M$ is a smooth manifold the symbol $C_c^\infty(M)$ denotes the complex space of smooth {\em compactly supported} complex-valued  functions on $M$.

An operator $A$ is always understood as a {\em linear} operator and its domain, denoted by $D(A)$, is always assumed to be a {\em linear subspace}. 
The symbol $\cB$ denotes
a Banach space over the field $\mathbb{C}$  or $\mathbb{R}$ and  $\mathscr{L}(\cB)$ denotes  the set of all bounded 
linear operators in $A: D(A) \to \cB$ with $D(A)=\cB$. 

If $A:D(A) \to \cB$
and $B: D(B) \to \cB$ are operators with $D(A),D(B) \subset \cB$, then
(i)  the domain of $A+B$ is defined as $D(A+B) := D(A)\cap D(B)$, 
(ii) the domain of $AB$ is defined as $D(AB):= \{x \in D(B) \:|\: Bx \in D(A)\}$, 
(iii) the domain of $aA$, with $a\in \bR$ or $\bC$,  is $D(aA) :=D(A)$ 
except for  $a=0$, where $D(0A)=\cB$; finally, $A \subset B$ means $D(A)\subset D(B)$ and $B|_{D(A)}=A$.

\section{Analytic and Geometric Preliminaries}\label{sez2}

We assume that the  reader is familiar with the theory of $C_0$-semigroups and we recall here just some basic definitions and results in order to fix the notation and the used terminology. 
We also recall some basic facts about the connection of the theory of $C_0$-semigroups and the theory of  random processes with particular emphasis on Feller semigroups and Feller processes.
Generally speaking, we shall focus attention only on the notions and the results which are strictly necessary to state and prove the results in the work. Details appear in the classical monographs \cite{EN1, Kol,EthKur, Bil, RogWil, IkeWat} and references therein.
Section \ref{Chernoffsec} contains some basic notions about {\em Chernoff-functions} \cite{Chernoff} which will be used in this work.
In sections \ref{riemmannian} and  \ref{Riemannian2} we shall remind the reader some basic notions of Riemannian geometry used throughout. Classical reference texts are \cite{Lee,DoCarmo,ONe, KobayashiNomizu}. Section \ref{bgeometry} introduces the basic notions and results on {\em manifolds of bounded geometry}. A recent review on the subject is \cite{DSS}.

\subsection{$C_0$-semigroups and evolution equations}\label{semgreveq}

\begin{definition}\label{semigrdef}
 A  mapping $V:\bR^+\to \mathscr{L}(\cB)$, is called a {\bf $C_0$-semigroup}, or {\bf a strongly continuous one-parameter semigroup} ({\bf of  bounded operators}) if it satisfies the following conditions,
	\begin{itemize}
\item[(1)] $V(0)= I$ the identity operator on $\mathcal{B}$,
	
\item[(2)]   $V(t+s)=V(t)V(s)$ if $t,s \in \bR_+$ ({\bf semigroup law}),
	
\item[(3)]  $\bR_+ \ni t \mapsto V(t)x$ is continuous for every $x\in \mathcal{B}$, i.e., $V$ is continuous in the {\em strong operator topology}.\hfill $\blacksquare$
	\end{itemize}
\end{definition}

As is well known \cite{EN1}, if $(V(t))_{t\geq 0}$ is a $C_0$-semigroup in Banach space $\cB$, then the set
 \begin{equation} D(L) := \left\{\varphi\in \cB \: \left|\: \exists \lim_{t\to +0}\frac{V(t)\varphi-\varphi}{t}\right.\right\}
\label{DL}\end{equation} is a dense linear subspace of $\cB$ invariant under the action of each $V(t)$, $t\geq 0$.
 The operator $L: D(L)\to \mathcal{B}$  $$L\varphi=\lim_{t\to +0}\frac{V(t)\varphi-\varphi}{t}\:, \quad \varphi \in D(L)$$
 is called the ({\bf infinitesimal}) {\bf generator}  of the $C_0$-semigroup $V$.
 The generator turns out to be  a closed linear operator that defines $V$ uniquely which, in turn, is denoted\footnote{As is well
 known, this notation is only formal in the general case even if in some situations it has a rigorous 
meaning in terms of norm-converging series if $L$ is bounded respectively spectral functional calculus in Hilbert spaces when $L$ is normal.} as $V(t)=e^{tL}$.

If $L: D(L) \to \mathcal{B}$ with $D(L) \subset \mathcal{B}$ is an operator, the problem of finding a function $u\colon \bR^+\to \mathcal{B}$ such that 
\begin{equation}\label{ACP1}
\left\{ \begin{array}{ll}
 \frac{d}{d t}u(t)= Lu(t); & t\geq 0,\\
 u(0)=u_0,\\
  \end{array} \right.
\end{equation}
is called the {\bf abstract Cauchy problem} (for the evolution equation) associated
to $L$. A function $u\colon \bR^+\to \mathcal{B}$ is called a {\bf classical solution} to abstract Cauchy problem (\ref{ACP1})
if, for every $t\geq 0$, the function $u$ has a continuous derivative (in the topology of $\cB$) $u'\colon \bR^+\to \mathcal{B}$, it holds 
$u(t)\in D(L)$ for $t\in \bR^+$, and (\ref{ACP1}) holds. The following fact can be found as Proposition 6.2 in \cite{EN1}, p. 145.

\begin{proposition}\label{ACPsol} 
 Let the operator $L: D(L) \to \mathcal{B}$
be the generator of a strongly continuous semigroup $(V(t))_{t\geq 0}$ in the Banach space $\mathcal{B}$.
Then, for every $u_0\in D(L)$ there is a unique classical solution to abstract Cauchy problem (\ref{ACP1}), which is
given by the formula $u(t)=V(t)u_0$.
\end{proposition}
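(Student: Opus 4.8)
The plan is to prove the two assertions --- existence and uniqueness --- separately, in both cases exploiting only the three semigroup axioms together with the invariance of $D(L)$ under each $V(t)$ recalled above.

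For existence, I would verify directly that $u(t) := V(t)u_0$ is a classical solution. That $u(t)\in D(L)$ for every $t\geq 0$ is immediate from the stated invariance of $D(L)$. The substance is to show that $t\mapsto V(t)u_0$ is continuously differentiable with $u'(t)=LV(t)u_0$. First I would treat the right derivative: using the semigroup law $V(t+h)=V(t)V(h)$ and the boundedness of $V(t)$, the forward difference quotient $h^{-1}(V(t+h)u_0-V(t)u_0)=V(t)\,h^{-1}(V(h)u_0-u_0)$ converges to $V(t)Lu_0$ as $h\to 0^+$, precisely because $u_0\in D(L)$; writing instead $V(t+h)=V(h)V(t)$ and using $V(t)u_0\in D(L)$ identifies the same limit with $LV(t)u_0$, so in particular $LV(t)u_0=V(t)Lu_0$. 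Then for the left derivative at $t>0$ I would write $V(t)=V(t-h)V(h)$, giving $h^{-1}(V(t)u_0-V(t-h)u_0)=V(t-h)\,h^{-1}(V(h)u_0-u_0)$, and pass to the limit by splitting this into $V(t-h)\bigl(h^{-1}(V(h)u_0-u_0)-Lu_0\bigr)$ plus $(V(t-h)-V(t))Lu_0$; the first piece vanishes thanks to the local uniform bound $\sup_{0\le s\le t}\|V(s)\|<\infty$, the second by strong continuity. Both one-sided derivatives coincide and equal $V(t)Lu_0$, which is continuous in $t$ by strong continuity, so $u$ is a classical solution.

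For uniqueness I would fix $t>0$, take an arbitrary classical solution $u$, and study the auxiliary curve $w(s):=V(t-s)u(s)$ on $[0,t]$, aiming at $w'\equiv 0$. Splitting the increment of $w$ as in the existence step, one factor contributes $V(t-s)u'(s)=V(t-s)Lu(s)$, while differentiating $s\mapsto V(t-s)$ against the fixed vector $u(s)\in D(L)$ contributes $-V(t-s)Lu(s)$ (the sign coming from $\tfrac{d}{ds}(t-s)=-1$), so the two cancel. From $w'\equiv 0$ on $[0,t]$ I would conclude that $w$ is constant: applying any $\phi\in\mathcal{B}^*$ reduces the claim to a scalar function of vanishing derivative, hence constant, and Hahn--Banach promotes this to constancy of $w$ in $\mathcal{B}$. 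Evaluating $w(0)=w(t)$ then yields $V(t)u_0=u(t)$, which simultaneously identifies the solution and establishes its uniqueness.

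The main obstacle --- indeed essentially the only delicate point --- lies in the limit passages where both the operator argument and the operator itself vary simultaneously (the left derivative in existence and the product rule in uniqueness). These are not simply the continuity of a single fixed bounded operator, and I expect to handle them by the add-and-subtract splitting described above, invoking the local uniform boundedness $\sup_{0\le s\le t}\|V(s)\|<\infty$ of the semigroup together with strong continuity, rather than any stronger (e.g.\ norm-) continuity, which need not hold for a general $C_0$-semigroup.
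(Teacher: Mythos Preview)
Your argument is correct and is essentially the standard textbook proof. The paper does not supply its own proof of this proposition; it simply records it as Proposition 6.2 on p.~145 of Engel--Nagel \cite{EN1}, and the proof given there follows exactly the line you propose (invariance of $D(L)$, forward and backward difference quotients via the semigroup law with the local uniform bound $\sup_{0\le s\le t}\|V(s)\|<\infty$, and the $w(s)=V(t-s)u(s)$ trick for uniqueness).
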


\subsection{Feller semigroups and random processes}
$C_0$-semigroups are of particular interest because of their strong interplay with the theory of evolution equations, on the one hand, and with probability theory, on the other hand; from the probabilistic point of view the  so-called {\em Feller semigroups} \cite{Kol,EthKur} are particularly important. 

Let $\mathcal{M}$ be a locally-compact metric space. With the symbol   
$C(\mathcal{M})$ we denote the space of continuous functions $f: \mathcal{M} \to \bC$.
With $C_0(\mathcal{M})$ we shall denote   the Banach space of continuous functions  {\bf vanishing at $\infty$}, i.e. $$C_0(\mathcal{M}):=\{f \in C(\mathcal{M}) \:|\: \forall \varepsilon>0 \; \exists K \subset \mathcal{M} \hbox{ compact } |f(x)|<\varepsilon \; \forall x\in K^c \},$$  endowed with the $\|  \; \|_{\infty}$-norm.
If $\mathcal{M}$ is compact, it is natural to define $C_0(\mathcal{M}):=C(\mathcal{M})$.

A linear operator $U:C_0(\mathcal{M})\to C_0(\mathcal{M})$ is said to be {\bf positive} if $(Uf)(x)\geq 0$
for $x\in \mathcal{M}$ whenever $f\in C_0(\mathcal{M})$ and  $f(x)\geq 0$ if $x\in \mathcal{M}$. $U$ is said to be a {\bf contraction } if  $\|Uf\|\leq \|f\|$ for $f \in C_0(\mathcal{M})$.
\begin{definition}\label{semFellerdef} If $\mathcal{M}$ is a locally-compact metric space,
a strongly continuous semigroup made  of positive  contractions on $C_0(\mathcal{M})$ is called a {\bf Feller semigroup}. \hfill $\blacksquare$
\end{definition}
A crucial result is the following one  (theorem 2.2  Ch.4 in \cite{EthKur}):

\begin{theorem}\label{theoremFellergenerator} Let $\mathcal{M}$ be a locally compact metric space and $L_1 \colon D \to C_0(\mathcal{M})$ an operator with domain $D\subset C_0(\mathcal{M})$  subspace. 
$L_1$ is closable and its closure $L:= \overline{L_1}$ is the generator of Feller semigroup  if the following conditions are valid.
\begin{itemize}
\item[{\bf (a)}] $D$ is dense in $C_0(\mathcal{M})$,
\item[{\bf (b)}] $L_1$ satisfies the {\bf positive maximum principle}:  \\
\begin{equation}\hbox{for each } f\in D:
\hbox{ if }\sup_{x\in \mathcal{M}}f(x)=f(x_0)\geq 0  \hbox{ 
for } x_0\in \mathcal{M},\hbox{ 
then }(L_1f)(x_0)\leq 0\:,\label{PMP}\end{equation}
\item[{\bf (c)}] $Ran(L_1 - \lambda I)$ is dense in $C_0(\mathcal{M})$ for some $\lambda >0$.
\end{itemize}
\end{theorem}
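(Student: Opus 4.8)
The plan is to read this as the Feller-specific refinement of the classical Lumer--Phillips generation theorem, in which the positive maximum principle does double duty: it supplies the dissipativity needed for generation, and it supplies the positivity that upgrades the resulting contraction semigroup to a Feller semigroup. I would work throughout with the real space $C_0(\mathcal{M};\bR)$, the complex case following by complexification since all hypotheses respect the real structure. The first step is to extract dissipativity from (b), i.e. the estimate $\|(\lambda I - L_1)f\|_\infty \geq \lambda\|f\|_\infty$ for every $\lambda>0$ and $f\in D$. Given $f\neq 0$, either $f$ or $-f$ attains the value $\|f\|_\infty>0$ at some point $x_0\in\mathcal{M}$; this attainment is exactly where local compactness enters, since a real element of $C_0(\mathcal{M})$ with strictly positive supremum realises it on a compact set because it vanishes at infinity. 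Evaluating at $x_0$ and invoking \eqref{PMP} for $f$ (resp. $-f$) gives $(L_1f)(x_0)\leq 0$ (resp. $(L_1f)(x_0)\geq 0$), and in either case $(\lambda f - L_1 f)(x_0)$ has modulus at least $\lambda\|f\|_\infty$, which proves the estimate.

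With dissipativity established, I would combine it with the density of $D$ from (a) and the density of $\mathrm{Ran}(L_1-\lambda I)$ from (c) and invoke the Lumer--Phillips theorem in its dense-range form \cite{EN1}: a densely defined dissipative operator whose range $\mathrm{Ran}(\mu I - L_1)$ is dense for some $\mu>0$ is closable, and its closure $L=\overline{L_1}$ generates a strongly continuous contraction semigroup $(V(t))_{t\geq0}$. Concretely, the estimate of the first step passes to $L$, so $\lambda I - L$ is injective with $\|(\lambda I - L)^{-1}\|\leq 1/\lambda$ and with closed range; the density from (c) then upgrades this to $\mathrm{Ran}(\lambda I - L)=C_0(\mathcal{M})$, which is precisely the range condition required for Hille--Yosida generation.

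It then remains to show that each $V(t)$ is positive, and I would reduce this to positivity of the resolvent $R_\lambda=(\lambda I - L)^{-1}$ through the Yosida approximation. Writing $L_\lambda=\lambda^2 R_\lambda - \lambda I$ one has $e^{tL_\lambda}=e^{-\lambda t}\sum_{k\geq0}\frac{(\lambda^2 t)^k}{k!}R_\lambda^{\,k}$, so that $R_\lambda\geq 0$ forces $e^{tL_\lambda}\geq0$; since $V(t)f=\lim_{\lambda\to\infty}e^{tL_\lambda}f$ and the positive cone is closed, this yields $V(t)\geq 0$. To obtain $R_\lambda\geq0$ I would transfer the maximum principle to the closure in the weak form: for real $f\in D(L)$ whose supremum is attained, at least one maximiser $y_*$ satisfies $(Lf)(y_*)\leq 0$. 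Then, given $g\geq0$ and $f=R_\lambda g$, if $f$ had a negative minimum the function $-f$ would attain a strictly positive maximum, and the weak principle applied to $-f$ would produce a maximiser $y_*$ with $(Lf)(y_*)\geq0$ and $f(y_*)<0$, whence $g(y_*)=\lambda f(y_*)-(Lf)(y_*)<0$, contradicting $g\geq0$; hence $f\geq0$.

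I expect the genuine obstacle to be exactly this transfer of the positive maximum principle from the core $D$ to its closure $L$. The difficulty is that if $f_n\in D$ approximate $f\in D(L)$ with $L_1 f_n\to Lf$, the maximisers of $f_n$ need not converge, and on a non-compact $\mathcal{M}$ they could a priori escape to infinity, so one cannot naively pass to the limit in \eqref{PMP}. The resolution again exploits the $C_0$ structure: because $\sup f_n\to\sup f>0$ while all functions vanish at infinity, the near-maximisers of $f_n$ are eventually trapped in a fixed compact set, a subsequence of them converges to a maximiser $y_*$ of $f$, and passing to the limit in $(L_1 f_n)(y_n)\leq 0$ using uniform convergence and continuity of $Lf$ gives $(Lf)(y_*)\leq0$, which is the weak principle needed above. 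Assembling the three steps then shows that $L=\overline{L_1}$ generates a Feller semigroup.
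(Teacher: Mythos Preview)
The paper does not prove this theorem; it is quoted as a known result, Theorem~2.2 of Chapter~4 in Ethier--Kurtz \cite{EthKur}, and used as a black box throughout. Your argument is correct and is essentially the standard proof found in that reference: the positive maximum principle yields dissipativity, Lumer--Phillips then produces a contraction semigroup from (a)--(c), and positivity of $V(t)$ follows from positivity of the resolvent via the Yosida approximation. The step you flag as delicate---extending the positive maximum principle from $D$ to $D(L)$ by trapping near-maximisers in a fixed compact set using the $C_0$ condition---is handled correctly and is indeed the crux of the argument.
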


\begin{remark}$\null$\\
{\bf (1)} Given a closed operator $L:D(L)\subset \cB\to \cB$ on a Banach space $\cB$, a dense subspace $D\subset D(L)$ is called a  {\bf core} for $L$ if $L|_{D}$ is closable  and $\overline{L|_{D}}=L$.\\
Theorem \ref{theoremFellergenerator} in fact yields the existence of the semigroup as well as a core for its generator.\\
  {\bf (2)} In this paper, $\mathcal{M}$ is a Riemannian manifold $(M,g)$. We will introduce  and use three types of operators: $L_0$ is always a differential operator defined on the whole $C^\infty(M)$, $L_1$ is its restriction to a suitable subspace $D_k$ satisfying the theorem above, $L= \overline{L_1}$ is the generator of the Feller semigroup.  $\hfill \blacksquare$
\end{remark}

By the {\em Riesz-Markov theorem},  it is possible to associate to any Feller semigroup $V$  a family $(p_t(x))_{t\geq 0, x\in \mathcal{M}}$ of positive Borel measures on $\mathcal{M}$  such that, for all $t\geq 0$,
$$(V(t)f)(x)=\int_\mathcal{M} f(y) p_t(x, dy), \qquad x\in \mathcal{M}$$
and, for all  $f\in C_0(\mathcal{M})$,
$$\lim_{x_n\to x}\int_\mathcal{M} f(y) p_t(x_n, dy)=\int_\mathcal{M} f(y) p_t(x, dy).$$
Moreover $p_t(x, \mathcal{M})\leq 1$. 

If all the measures of the family  $(p_t(x))_{t\geq 0, x\in \mathcal{M}}$ are {\em probability measures}, then the Feller semigroup is said {\bf conservative}.  In this case,  from the semigroup law, the family of probability measures satisfies the {\em Chapman-Kolmogorov} equation:
\begin{equation}\label{CK}p_{t+s}(x, A)=\int_\mathcal{M}p_t(y, A)p_s(x, dy), \qquad \hbox{for every Borel set } A\subset \mathcal{M}.\end{equation}
As a consequence,
given an arbitrary probability measure $\mu$ on the Borel $\sigma $-algebra $\cB(\mathcal{M})$ of $\mathcal{M}$, it is possible to construct a Markov process $(X^{\mu}_t)_{t\geq 0}$ with values in $\mathcal{M}$    with finite dimensional distributions 
\begin{equation}\label{fin-dim-distr}\bP(X^{\mu}_{t_1}\in A_1, \dots X^{\mu}_{t_n}\in A_n)=
\int 1_{A_1}(x_1)\cdots 1_{A_n}(x_n)p_{t_n-t_{n-1}}(x_{n-1}, dx_n)\cdots p_{t_1}(x_0, dx_1)d\mu (x_0),
\end{equation}
for $0\leq t_1\leq \dots \leq t_n$ and $A_1, ..., A_n \in \cB(\mathcal{M})$. The existence of the process is guaranteed by the  {\em Kolmogorov existence theorem} \cite{Bil}, the family of  measures \eqref{fin-dim-distr} being consistent due to the  Chapman-Kolmogorov identity \eqref{CK}.
In the general case,  it is still possible to define the associated Markov process $(X^{\mu}_t)_{t\geq 0}$ with values in  the 1-point compactification $\mathcal{M}':=\mathcal{M}\cup \partial$ of $\mathcal{M}$ and 
 the process enjoys the {\em strong Markov property} \cite{RogWil}. If $X_s=\partial $ $\forall s\geq t$ whenever either $X_{t^-}=\partial $ or $X_t=\partial$,  then these processes are called {\em Feller-Dynkin} (FD-) processes. The random variable 
$$\xi:=\inf\{t\in \bR^+ | X_t=\partial\}$$
is called {\em lifetime} or {\em explosion time} of the process.
In fact, if the Feller semigroup is conservative then $\xi=+\infty$ almost surely, hence the   FD-process can be thought as a stochastic process with values in $\mathcal{M}$ instead of $\mathcal{M}'$ and it is called conservative.

By \eqref{fin-dim-distr} the action of the semigroup admits the following probabilistic representation
\begin{equation}(V(t)f)(x)=\bE[f(X_t^x)],\qquad x\in \mathcal{M}, \label{prorep}\end{equation}
where $X_t^x$ is the aforementioned Markov process with initial distribution $\mu=\delta_x$, the Dirac measure concentrated at $x\in \mathcal{M}$.

An important class of FD-processes are the {\em diffusions}, also called Feller-Dynkin diffusions \cite{IkeWat,RogWil}. They are defined as FD-processes with continuous paths up to the explosion time.  The generator $L$ of the associated semigroup is a local operator with a domain that includes the set  of smooth functions with compact support and $L$ satisfies the maximum principle \eqref{PMP} there. If  $x\in \mathcal{M}$ and $(X^x_t)$ is the diffusion process starting at $x$, then its law $P^x$ is a probability measure on the metric space $C(\bR^+, \mathcal{M})$ of continuous paths on $\mathcal{M}$ or, more generally in the case of explosion, on  $C(\bR^+, \mathcal{M}')$. The family $\{P^x\}_{x\in \mathcal{M}}$ is called a {\em system of diffusion measures}.\\ 
 In the case where the state space $\mathcal{M}$ of the Feller-Dynkin diffusion is  $\bR^d$, it is well known (see e.g. \cite{RogWil,Kol}) that
  the restriction of $L$ to $C^\infty_c(\bR^d)$ is a second-order elliptic operator of the form
 \begin{equation}\label{generator1}(L_0f)(x)=\sum_{i,j}a^{ij}(x)\frac{\partial^2 f}{\partial x^i\partial x^j}(x)+\sum_{j}b^{j}(x)\frac{\partial f}{\partial x^j}(x)+ c(x)f(x), \qquad x\in \bR^d,\quad f \in C_c^\infty(\bR^d) \:.\end{equation}
where $a^{ij}, b^j$, $c$,  $i,j=1,\dots,d$,  are real-valued  continuous  functions, $c\leq 0$ and  the matrix  of coefficients  $a^{ij}(x)$ is symmetric and non-negative definite.
The corresponding semigroup $V$  provides a classical solution of the Cauchy problem (in the above semigroup sense) for $u_0\in C_c^\infty(\bR^d)$,
\begin{equation}\label{CPee}
\left\{ \begin{array}{ll}
u'_t(t,x)=Lu(t,x) \ \mathrm{ for }\ t>0, x\in \bR^d\\
u(0,x)=u_0(x)\ \mathrm{ for } \ x\in \bR^d
\end{array} \right.
\end{equation}

Actually, by formula \eqref{prorep}, the function $ u:\bR^+\times \bR^d\to \bR$ admits the probabilistic representation formula $u(t,x)=\bE[u_0(X_t^x)]$. 

Conversely, given globally Lipschitz maps $\sigma _k^i: \bR^d\to \bR$ and $b^i:\bR^d\to \bR$ and setting $a^{ij}=\sum_k\sigma _k^i\sigma _k^j$, it is possible to prove that there exists a Feller semigroup whose generator restricted to $C^\infty_c(\bR^d)$ has the form \eqref{generator1} with $c=0$. The associated diffusion process is constructed in terms of the so called martingale solution of the stochastic differential equation 
\begin{equation}\label{SPDE1}dX^i_t=\sum_{k=1}^d \sigma _k^i(X_t)dB_t^k+b^i(X_t)dt,\end{equation}
where $(B_t)_{t\in \bR^+}$, is a $d$-dimensional Brownian motion. For an extended discussion of this topic see, e.g. \cite{RogWil,IkeWat}.

\subsection{Chernoff approximations for $C_0$-semigroups} \label{Chernoffsec}
Here we recall {\em Chernoff's theorem}   \cite{Chernoff,EN1,BS} which provides approximation method for $C_0$-semigroups 
on Banach space in terms of suitable operator valued functions.

\begin{theorem}[The Chernoff theorem]\label{FormulaChernova}
Let $(e^{tL})_{t\geq 0}$ be a $C_0$-semigroup on a Banach space $\cB$ with generator  $L: D(L) \to \cB$ and let $S:\bR^+\to \mathscr{L}(\cB)$ be a map satisfying the following conditions:
\begin{enumerate}
    \item There exists $\omega\in\mathbb{R}$ such that $\|S(t)\|\leq e^{\omega t}$ for all $t\geq 0$;
    \item The function $S$ is  continuous in the strong topology in  $\mathscr{L}(\cB)$;
    \item $S(0) = I$, i.e., $S(0)f=f$ for every $f \in \cB$;
    \item There exists a linear subspace $\mathcal{D} \subset D(L)$ that is a core for the operator $L : D(L) \to \cB$   and such that    $\lim_{t \to 0}(S(t)f-f-tLf)/t=0$ for each $f \in \mathcal{D}$.
\end{enumerate}
Then the following holds:
\begin{equation}\label{Chest}
\lim_{n\to\infty}\sup_{t\in[0,T]}\left\|S(t/n)^nf - e^{tL}f\right\|=0,\quad \mbox{for every $f\in \cB$ and every $T>0$,}
\end{equation}
where $S(t/n)^n$ is a composition of $n$ copies of the linear bounded operator $S(t/n)$.
\end{theorem}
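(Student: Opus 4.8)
The plan is to prove the Chernoff theorem by first controlling a single factor $S(t/n)$ and then iterating, using the standard telescoping decomposition together with the uniform boundedness supplied by hypothesis (1). The key preliminary reduction is to assume $\omega = 0$, i.e.\ that each $S(t)$ is a genuine contraction. This is achieved by passing to the rescaled family $\tilde S(t) := e^{-\omega t}S(t)$, whose powers $\tilde S(t/n)^n = e^{-\omega t}S(t/n)^n$ differ from the original only by a scalar factor converging to $1$ uniformly on $[0,T]$; correspondingly one replaces $L$ by $L-\omega I$, which generates the semigroup $e^{-\omega t}e^{tL}$. Thus without loss of generality $\|S(t)\|\le 1$ and $\|e^{tL}\|\le 1$ throughout.

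The central estimate is the telescoping (Chernoff) identity. For $f\in\mathcal{B}$ one writes
\begin{equation}
S(t/n)^n f - e^{tL}f = \sum_{k=0}^{n-1} S(t/n)^{\,n-1-k}\bigl(S(t/n) - e^{(t/n)L}\bigr)e^{(kt/n)L}f,
\end{equation}
so that, using $\|S(t/n)^{\,n-1-k}\|\le 1$ and the semigroup property, the difference is bounded by $\sum_{k=0}^{n-1}\bigl\|(S(t/n)-e^{(t/n)L})e^{(kt/n)L}f\bigr\|$. The heart of the matter is therefore to show that the single-factor difference $S(s)-e^{sL}$, applied to the relevant vectors, is $o(s)$ as $s\to 0$, and then to sum the resulting $n$ terms, each of order $o(t/n)$ uniformly, to obtain a total of order $o(1)$. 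First I would establish this for $f\in\mathcal{D}$, the core. By hypothesis (4), $S(s)f = f + sLf + o(s)$, while by the definition of the generator $e^{sL}f = f + sLf + o(s)$ as well (using $Lf \in \mathcal{B}$ and strong continuity), so $S(s)f - e^{sL}f = o(s)$ for each fixed $f\in\mathcal{D}$.

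The main obstacle, and the step requiring the most care, is to upgrade this pointwise-in-$s$ smallness into the \emph{uniform} control over the $n$ summands and over $t\in[0,T]$ that the telescoping sum demands. The delicate point is that in the $k$-th term the operator $S(s)-e^{sL}$ is applied not to the fixed $f$ but to the moving vectors $e^{(kt/n)L}f$; one must ensure the $o(s)/s$ estimate holds uniformly as these vectors vary. The clean way to handle this is to first prove convergence for $f\in\mathcal{D}$ by combining the contraction bounds with a uniform version of hypothesis (4) obtained from the continuity of $s\mapsto S(s)$ and $s\mapsto e^{sL}$ on the compact orbit $\{e^{(kt/n)L}f : 0\le kt/n\le T\}$, which is itself compact by strong continuity of $e^{tL}$. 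Once \eqref{Chest} is verified on the core $\mathcal{D}$, the final step extends it to all $f\in\mathcal{B}$ by a standard $3\varepsilon$-density argument: since $\mathcal{D}$ is a core, it is dense in $\mathcal{B}$, and the approximating operators $S(t/n)^n$ together with the limit $e^{tL}$ are all contractions uniformly in $n$ and $t\in[0,T]$, so the uniform convergence on the dense set $\mathcal{D}$ transfers to the whole space. This density-plus-uniform-boundedness closure is precisely where hypothesis (1) is used a second time, and completing it rigorously is what makes the argument go through.
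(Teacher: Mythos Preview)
The paper does not supply its own proof of this theorem; it is quoted as a classical result with references to \cite{Chernoff,EN1,BS}. So there is no in-paper argument to compare against, and I evaluate your proposal on its own merits.

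Your reduction to the contraction case $\omega=0$ and the final $3\varepsilon$ density extension from $\mathcal{D}$ to $\mathcal{B}$ are both correct. The genuine gap lies in between. The telescoping identity
\[
S(t/n)^n f - e^{tL}f=\sum_{k=0}^{n-1}S(t/n)^{\,n-1-k}\bigl(S(t/n)-e^{(t/n)L}\bigr)e^{(kt/n)L}f
\]
forces you to control $(S(h)-e^{hL})g$ with $h=t/n$ for the \emph{moving} vectors $g=e^{(kt/n)L}f$. Even when $f\in\mathcal{D}$, these $g$ lie only in $D(L)$ (which is semigroup-invariant), not necessarily in $\mathcal{D}$; hypothesis~(4) gives $(S(h)g-g)/h\to Lg$ only for $g\in\mathcal{D}$. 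Your appeal to compactness of the orbit $\{e^{sL}f:0\le s\le T\}$ does not close this gap: the orbit is indeed compact in $\mathcal{B}$ (and even in $D(L)$ with the graph norm, since $Le^{sL}f=e^{sL}Lf$), but the family of linear maps $g\mapsto (S(h)g-g-hLg)/h$ has operator norm of order $1/h$ on $(D(L),\|\cdot\|_L)$ and is therefore \emph{not} equicontinuous as $h\to 0$. Without equicontinuity, pointwise convergence on the dense core together with compactness of the orbit does not yield the uniform $o(h)$ estimate you need to sum the $n$ terms to $o(1)$.

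The standard route in the references the paper cites (e.g.\ Engel--Nagel, Thm.~III.5.2) sidesteps this obstruction by avoiding the orbit altogether. One first proves \emph{Chernoff's lemma}: for any contraction $C$ and any $x$, $\|C^{n}x-e^{n(C-I)}x\|\le\sqrt{n}\,\|(C-I)x\|$. Applied with $C=S(t/n)$ and $x=f\in\mathcal{D}$, the right-hand side equals $(t/\sqrt{n})\|Lf\|+o(1/\sqrt{n})\to 0$ uniformly in $t\in[0,T]$, and crucially involves only the fixed vector $f$. One then shows $e^{n(S(t/n)-I)}f=e^{tL_{t/n}}f\to e^{tL}f$, where $L_h:=(S(h)-I)/h$, via the Trotter--Kato approximation theorem, using that $L_h\to L$ on the core $\mathcal{D}$ and that each $e^{tL_h}$ is a contraction semigroup. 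The density extension to all of $\mathcal{B}$ then proceeds exactly as you describe.
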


\begin{remark}{ 
Let $(e^{tL})_{t\geq 0}$ be a $C_0$-semigroup on a Banach space $\cB$ with generator  $L: D(L) \to \cB$ and let $S:\bR^+\to \mathscr{L}(\cB)$ be a map satisfying formula \eqref{Chest} then:
\begin{itemize} 
\item[(a)]
$S$ is called a {\bf  Chernoff function} for operator $L$ or 
  {\bf Chernoff-equivalent} to $C_0$-semigroup $(e^{tL})_{t\geq 0}$ \cite{STT}.
  
\item[(b)]The expression $S(t/n)^nf$ is called a \textit{\bf Chernoff approximation expression} for $e^{tL}f$.

\item[(c)] The $\mathcal{B}$-valued function
$$U(t):=\lim_{n\to\infty}S(t/n)^nu_0=e^{tL}u_0$$ is the classical solution of the  Cauchy problem (\ref{ACP1}) due to Proposition \ref{ACPsol} and Theorem \ref{FormulaChernova} if $u_0\in D(L)$, so
Chernoff approximation expressions become approximations to the solution with respect to norm in $\mathcal{B}$.
\end{itemize}
}\end{remark}

A  definition of Chernoff equivalence and Chernoff function was suggested in  2002 \cite{STT} and developed in \cite{SWWdan,SWWcan,SWW2007,SmHist,SmSchrHist, R5}.  New wording was proposed in  \cite{R1, R-AMC2018}. Every $C_0$-semigroup $S(t)=e^{tL}$ is a Chernoff function for its generator $L$, actually it is the only one
 Chernoff function which has a semigroup composition property. 
 Also there are other statements known as Chernoff-type theorems and they produce different notions of Chernoff function. Here we will not give an overview of this topic. We just fix one version of the Chernoff theorem, one definition of Chernoff function and work with it.

\subsection{Structures on Riemannian manifolds}\label{riemmannian}
In this section we recall some  general notions of Riemannian geometry. For more details we refer to 
\cite{Lee,DoCarmo,ONe, KobayashiNomizu}.
 Let  $(M,g)$ be  a  smooth (i.e., $C^\infty$) Riemannian manifold, which we will always assume to be connected, Hausdorff, and 2nd countable. 
The {\bf Riemannian 
distance} of $p,q \in M$ is defined as 
\begin{equation} \label{defd}d_{(M,g)}(p,q) = \inf_{\gamma \in C_{p,q}} L_g(\gamma)\:.\end{equation}
Above, $C_{p,q}$ is the set of the smooth curves $\gamma : [a,b] \to M$ with $\gamma(a)=p$ and  $\gamma(b)=q$  ($a<b$ depend on $\gamma$) and 
$$L_g(\gamma) := \int_a^b \|\dot{\gamma}(t)\|_g dt\:,$$
-- where $\dot{\gamma}$ is the tangent vector to $\gamma$ and $\|\dot{\gamma}(t)\|_g=\sqrt{g_{\gamma(t)}(\dot\gamma(t),\dot\gamma(t))}$ its standard $g$-norm (see below) --
 is the {\bf length} of the curve $\gamma$ computed with respect to $g$.
 The Riemannian distance makes $M$ a metrical space whose metrical topology 
coincides with the original  topology of $M$ as topological manifold.

If $p\in M$ and $U_p \subset T_pM$ is a sufficiently small open neighborhood of the origin $0\in T_pM$,
the {\bf exponential map} at $p$, denoted by  $\exp_p :U_p \to M$, is the map associating $v\in U_p$ with $\sigma(1,p,v)$, where 
$[0,1] \ni s \mapsto \sigma(s,p,v) \in  M$ is the restriction to $[0,1]$ of the maximal $g$-geodesic in $M$ starting from $p$, at $s=0$,
 with initial tangent vector $v$.  It is known that if $U_p$ is sufficiently small, $\exp_p$ is a diffeomorphism 
from   $U_p\subset T_pM$  onto the open neighborhood  $V_p := \exp_p(U_p)  \subset M$ of $p$. 
Furthermore,  such $V_p$ can be chosen to be an {\em open $d_{(M,g)}$-metric ball} $V_p= B^{(M,g)}_r(p)$ of 
sufficiently small radius $r>0$ (in this case $U_p$ will be the open ball in $T_pM$ with radius $r$).

 With the said choice of  $B^{(M,g)}_r(p)$,
  if $N:= \{ e_1,\ldots e_d\}$ is a $g$-orthonormal basis of $T_pM$, we can construct a  bijective  map  denoted by  $\exp^{-1}_{p,N}:  B^{(M,g)}_r(p)\to B_r(0) \subset \bR^d$ as:
$$\exp^{-1}_{p,N}:  B^{(M,g)}_r(p) \ni q \mapsto (y^1(q), \ldots, y^d(q)) \in B_r(0) \subset \bR^d\quad \mbox{where $\sum_{j=1}^d y^j(q) e_j =
 \exp_p^{-1}(q)$.}$$ This map  is smooth with its inverse and its image (i.e., the coordinate representation of the open neighborhood of
 the origin of $T_pM$  previously denoted 
by $U_p$) 
is  a standard ball $B_r(0) \subset \bR^d$ centered at the origin with the same  radius $r$ as $B_r^{(M,g)}(p)$. The pair
$(B^{(M,g)}_r(p), \exp^{-1}_{p,N})$ is called  a (local)  {\bf normal Riemannian chart centered on} $p$ and the coordinates $y^1,\ldots,y^d$, {\bf Riemannian coordinates centered on} $p$.

It turns out that, referring to this coordinate patch,
\begin{itemize} \item[(a)]  the components at $y\in B_r(0)$ of the metric and its inverse respectively
 satisfy $g_{ab}(0)= \delta_{ab}$ and   $g^{ab}(0)= \delta^{ab}$ for $a,b = 1,\ldots, d$;
\item[(b)]  the {\bf Levi-Civita connection coefficients}  (see (\ref{CCLC}) below) $\Gamma^c_{ab}(y)$ associated to metric satisfy 
$\Gamma^c_{ab}(0)= 0$ and it also holds  $\frac{\partial g_{ab}}{\partial y^c}|_0 = \frac{\partial g^{ab}}{\partial y^c}|_0 =0$   for $a,b, c = 1,\ldots, d$; 
\item[(c)]  the $\bR^d$-Euclidean norm in $B_r(0)$ coincides with the distance from 
$p$ in the following sense: \begin{equation} \|y\| = d_{(M,g)}\left(\exp_p\left( \sum_{j=1}^d y^j(q) e_j \right), p\right);\label{distanceG}\end{equation}
\item[(d)]  there is a unique geodesic segment $\gamma$  joining $p$ and $q\in  B_r^{(M,g)}(p)$ and completely included in $B_r^{(M,g)}(p)$. In Riemannian coordinates centered on $p$, it coincides with the $\bR^d$ segment  joining the origin to $(y^1(q), \ldots, y^d(q))$.
The length $L_g(\gamma)$ is
 $d_{(M,g)}(p,q)$.
\end{itemize}

 $(M,g)$ is said to be {\bf geodesically complete} if all geodesics are  defined for all values of their affine  parameter in $\bR$. Another way to say the same is that the exponential map $\exp_x$, for every given $x\in M$, is defined on the whole $T_xM$ (even if this does not imply that it defines a diffeomorphism on the whole $T_xM$). 
The celebrated {\em Hopf-Rinow theorem} proves that geodesical
 competeness is equivalent to the fact that $M$ is complete as a metric space with respect to $d_{(M,g)}$. In turn this is equivalent
 to the fact that closed bounded (with respect to the geodesical distance)  subsets of $M$ are compact. Finally for 
geodesically complete manifolds, every pair $p,q\in M$ admits a (not necessaily unique) geodesic joining
 them and the length of this  geodesic segment coincides with  $d_{(M,g)}(p,q)$, since the said geodesic
minimizes the length of the curves joining the points.

The {\bf injectivity radius at $p\in M$}, denoted by $I_{(M,g)}(p) \in \bR^+$,  is the supremum of the set of radii $r$ of the open ball
 $B^{(M,g)}_r(p) \subset M$  such that $(B^{(M,g)}_r(p), \exp^{-1}_{p,N})$ is a   normal Riemannian chart centered at $p$ for 
an orthonormal basis $N$ of $T_pM$ (it does not depend on $N$).  The {\bf injectivity radius} of $(M,g)$ is
$$I_{(M,g)} := \inf_{p\in M} I_{(M,g)}(p)\:.$$
\begin{remark}\label{compact-finiteI}
Compact smooth Riemannian  manifolds in particular  have always strictiy positive injectivity radius as the reader easily proves. \hfill $\blacksquare$
\end{remark}
Strictly positivity of the injectivity radius has several important consequences, the following one in particular.
 
\begin{lemma}\label{lemmaC} If $(M,g)$ is a connected smooth manifold  with  strictly positive injectivity radius, then $(M,g)$ is geodesically complete and all closed bounded sets  are compact. 
\end{lemma}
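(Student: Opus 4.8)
The plan is to derive geodesic completeness directly from the uniform lower bound on the injectivity radius, and then to invoke the Hopf--Rinow theorem recalled above to obtain metric completeness and the compactness of closed bounded sets at no extra cost.

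Set $r_0 := I_{(M,g)} > 0$. The crucial observation is that strict positivity of the injectivity radius at \emph{every} point yields a uniform lower bound on the affine length for which geodesics are guaranteed to exist. Indeed, for each $p \in M$ one has $r_0 \leq I_{(M,g)}(p)$, so $\exp_p$ is a normal-chart diffeomorphism on the ball $B_{r}(0) \subset T_pM$ for every $r < r_0$. I would make this precise through the affine reparametrization $\sigma(s,p,u) = \exp_p(su)$ of the unit-speed geodesic with initial velocity $u$, which is defined whenever $su \in B_{r_0}(0)$, i.e. for $s < r_0$. Hence the unit-speed geodesic issued from any point in any direction is defined at least on $[0,r_0)$, and the bound $r_0$ does not depend on $p$.

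Next I would argue by contradiction that geodesics extend to all affine parameters. Let $\gamma : [0,a) \to M$ be a maximal unit-speed geodesic and suppose $a < \infty$. Choosing $t_0 \in (a - r_0, a)$ and setting $p := \gamma(t_0)$ and $v := \dot\gamma(t_0)$ (a unit vector), the previous step provides a geodesic starting at $p$ with initial velocity $v$ defined on $[0,r_0)$; since $a - t_0 < r_0$, this geodesic, by the uniqueness of geodesics with prescribed initial data, agrees with $\gamma$ on their common domain and prolongs it past the parameter $a$. This contradicts the maximality of $a$, so $a = +\infty$, and the same reasoning applies to the backward extension. Therefore every geodesic is defined on all of $\bR$, i.e. $(M,g)$ is geodesically complete.

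Finally, since $M$ is connected, the Hopf--Rinow theorem recalled in the text applies and upgrades geodesic completeness to completeness of $(M,d_{(M,g)})$ as a metric space, which in turn is equivalent to the assertion that all closed bounded subsets of $M$ are compact; this delivers both conclusions of the lemma. The only genuinely delicate point is the first step, namely extracting a \emph{uniform} existence time for geodesics from the hypothesis: this is exactly where the uniformity encoded in $\inf_{p \in M} I_{(M,g)}(p) > 0$ (as opposed to mere pointwise positivity) is indispensable, while the remainder is a routine prolongation-and-citation argument.
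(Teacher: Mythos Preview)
Your proof is correct and follows essentially the same approach as the paper's: both argue by contradiction that a maximal unit-speed geodesic with finite endpoint can be extended using the uniform lower bound $I_{(M,g)}>0$ on the injectivity radius, and both then invoke Hopf--Rinow for the remaining conclusions. The only cosmetic difference is that the paper phrases the contradiction via a sequence $t_n\to\omega$ showing $I_{(M,g)}(\gamma(t_n))<\omega-t_n\to 0$, whereas you pick a single $t_0\in(a-r_0,a)$ and extend directly; the underlying idea is identical.
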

\begin{proof} See the appendix. \end{proof}

\subsection{Manifolds of bounded geometry}\label{bgeometry}
For future use, we introduce the definition of manifold $(M,g)$ {\em of bounded geometry}. This is a class of Riemannian
 manifolds where, in particular,  the thesis of Lemma \ref{lemmaC} is valid. See \cite{DSS} for a recent extended review
 and \cite{Kor,shubin} for a summary of notions and results used in this paper.
 Roughly speaking (see remark \ref{remBG} below), bounded geometry means that, on the one hand, every point $p\in M$ on the manifold there is  a geodesical ball $B^{(M,g)}_r(p)$ covered by Riemannian coordinates  centered on $p$ of  radius $r>0$ independent of $p$. On the other hand, there are uniform bounds on all derivatives of the component of 
the metric in the said Riemannian coordinates in $B^{(M,g)}_r(p)$ independent of $p$. Here is the formal definition. 
 \begin{definition}\label{defboundedgeom}
  A connected smooth Riemannian manifold $(M,g)$ is said
 {\bf of bounded geometry} if $(M,g)$ has strictly positive injectivity radius and for some constants $c_k<+\infty$, $k=0,1,\ldots$
$$\| \|\nabla^{(g)k}  R\|_g\|_\infty \leq c_k \:, \quad k=0,1,\ldots\:. $$
\hfill $\blacksquare$
 \end{definition}

Above and henceforth, $\nabla^{(g)}$ indicates the {\em covariant derivative} of the {\em Levi-Civita connection} associated to $g$, $R$ indicates the {\em Riemannian curvature tensor} and $\|\cdot\|_g$ denotes the natural point-wise norm associated to the metric $g$ acting on smooth tensor fields of a given  order
 (order $(1, 3+k)$ concerning $\nabla^{(g)k} R$). For instance,  if $T$ is a smooth tensor field of order $(n,m)$, so that their components at $q\in M$ 
in coordinates $y^1,\ldots, y^d$ around $q$ are 
${T^{a_1\cdots a_n}}_{b_1\cdots b_m}(y(q))$, we have
\begin{multline}\|T(q)\|^2_g =\sum_{a_1,\dots, a_n, b_1,\dots, b_n,  c_1,\dots, c_n, d_1,\dots, d_n } g_{a_1c_1}(y(q)) \cdots g_{a_nc_n}(y(q)) g^{b_1d_1}(y(q))\cdots g^{b_md_m}(y(q))\\ {T^{a_1\cdots a_n}}_{b_1\cdots b_m}(y(q))
 {T^{c_1\cdots c_n}}_{d_1\cdots d_m}(y(q))\:.
 \end{multline}

\begin{example} From the definition above, the following manifolds in particular are of bounded geometry (Example 2.1 in \cite{DSS,shubin}):
\begin{itemize}
\item[(i)] every smooth compact Riemannian manifold;

\item[(ii)] $\bR^m$ equipped with its natural metric;

\item[(iii)] every smooth Riemannian locally flat manifold with strictly positive injectivity radius;

\item[(iv)]  some classical manifolds as the $m$-dimensional hyperbolic space (the unit ball $B_1(0)$ in $\bR^m$
 equipped with the {\em Poincar\'e disk metric});
\item[(v)]  Homogeneous manifolds with invariant metric;
\item[(vi)]  covering manifolds of compact manifolds with a Riemannian
metric  which is lifted from the base manifold.\hfill $\blacksquare$
\end{itemize}
\end{example}

Another crucial  feature of a smooth Riemannian manifolds of bounded geometry is the one that  follows \cite{DSS}.  For every given $r \in (0, I_{(M,g)}]$, there is a sequence of finite constants 
$C^{(r)}_k \in \bR^+$, $k=0,1,2,\ldots$ and a constant $c^{(r)}>0$ such that 
\begin{equation}
\det [g_{ab}(y)] 
 \geq c^{(r)}\:, \mbox{ if $y\in B_r(0)$} \quad \mbox{and} \quad \max_{|\alpha|\leq k} \| \partial_y^\alpha
 g_{ab}(y)\|^{(B_r(0))}_\infty \leq C_k^{(r)}\:, \quad a,b = 1,\ldots, d \label{estimateg}
\end{equation}
where $y^1,\ldots, y^n$ are the coordinates of every normal Riemannian chart  with domain  $B_r^{(M,g)}(p)$ centered at $p\in M$ and $g_{ab}(y)$ are the components of the metric in that local coordinate system.
We stress that the constant $C_k$ do {\em not} depend on $p$ and {\em all domains have the same geodesical radius $r$}.

From (\ref{estimateg}) 
taking advantage of the Kramer rule to compute the element $g^{ab}(y)$ of the inverse of the matrix of the coefficients $g_{ab}(y)$, as well as recursively using the identity
$$\frac{\partial g^{ab}}{\partial y^i}= -\sum_{c,d}g^{ac}g^{bd} \frac{\partial g_{cd}}{\partial y^i}\:,$$
it easily arises the existence of another sequence 
of finite constants 
$H^{(r)}_k \in \bR^+$, $k=0,1,2,\ldots$  such that 
\begin{equation}
 \max_{|\alpha|\leq k} \| \partial_y^\alpha g^{ab}(y)\|^{(B_r(0))}_\infty \leq H_k^{(r)}\:, \quad a,b = 1,\ldots, d \label{estimateg2}
\end{equation}
where, as above, $y^1,\ldots, y^n$ are the coordinates of every normal Riemannian chart  with domain  $B_r^{(M,g)}(p)$ centered at $p\in M$ of radius $r \in (0, I_{(M,g)}]$.  

Finally, referring to Levi-Civita's connection coefficients \begin{equation}\label{CCLC}
 \Gamma^a_{bc}(y):= \frac{1}{2}\sum_{d}g^{ad}(y)\left( \partial_{y^c}g_{bd} +\partial_{y^b} g_{dc} -\partial_{y^d} g_{bc}\right)\:,\end{equation}
from the above pair of results, we obtain the  existence of another sequence 
of finite constants 
$J^{(r)}_k \in \bR^+$, $k=0,1,2,\ldots$  such that 
\begin{equation}
 \max_{|\alpha|\leq k} \| \partial_y^\alpha \Gamma^{a}_{bc} (y)\|^{(B_r(0))}_\infty \leq J_k^{(r)}\:, \quad a,b,c = 1,\ldots, d \label{estimateg3}
\end{equation}
valid in every  normal Riemannian chart  around every $p\in M$ as before defined on a metric ball of radius $r \in (0, I_{(M,g)}]$
 with center $p$.  

\begin{remark}\label{remBG}
We observe {\em en passant} that if $(M,g)$ has strictly positive injectivity radius and satisfies (\ref{estimateg})
for a given $r \in (0, I_{(M,g)})$ -- so that it also satisfies  (\ref{estimateg2}) and  (\ref{estimateg3}) -- it is necessarily of bounded geometry, 
just in view of the polynomial expression  in components of the Riemann tensor in terms of $\Gamma_{ab}^c$ and their first derivatives.\hfill $\blacksquare$
\end{remark}
\subsection{Completeness of vector fields}\label{Riemannian2}
Let $M$ be a general smooth manifold.
As a  vector field $A$ on $M$ is a  map $A:M\to TM$, we use the notation $A(p)\in T_pM$.

Assuming that $A$ is smooth,  let us consider the Cauchy problem 
\begin{equation}\label{CP-M} \left\{
\begin{array}{l}\dot \gamma (s)=A(\gamma(s))\\
\gamma (t_0)=x
\end{array}
\right.\end{equation}
A solution $\gamma :(\alpha, \beta)\to M$ of \eqref{CP-M} is called {\bf maximal} if it is not the proper restriction of any other solution of  \eqref{CP-M}. 
By the uniqueness of local solution of the Cauchy problem 
 \cite{ONe} there exists only one maximal solution $\gamma$ of  \eqref{CP-M} and any other solution is one of its restrictions. $\gamma$   is called the {\bf  maximal integral curve of $A$ starting at $x$}.
A smooth vector field $A$ on the smooth manifold $M$  is said to be {\bf complete} [\cite{ONe}, p. 51] if each of its maximal integral curves is defined on the entire real line. 
We finally quote an elementary but crucial technical results whose proof is incuded for completeness in the appendix.
\begin{lemma}\label{lemma-complete-solutions}
  Let $(M,g)$ be a connected geodesically complete Riemannian manifold. Let $A$ be a smooth vector field such that
  \begin{equation}\label{assAalpha}
     \| \|A\|_g\|_\infty<+\infty\,.
  \end{equation}
  Then the maximal solutions of \begin{equation}\label{CPVF}\frac{d}{dt}\gamma (t)=A(\gamma (t))
  \end{equation} are complete.
\end{lemma}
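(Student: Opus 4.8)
The plan is to show that a maximal integral curve of $A$ cannot have a finite endpoint, by exploiting that the uniform bound on $A$ forces $\gamma$ to travel at bounded speed and hence to remain in a compact set as $t$ approaches any finite endpoint. Set $C := \|\,\|A\|_g\,\|_\infty < +\infty$. Let $\gamma : (\alpha,\beta) \to M$ be a maximal integral curve of $A$ and suppose, for contradiction, that $\beta < +\infty$ (the argument for the case $\alpha > -\infty$ is symmetric). Along $\gamma$ one has $\|\dot\gamma(s)\|_g = \|A(\gamma(s))\|_g \le C$, so for $t_0 \le s \le t < \beta$ the length estimate gives
\begin{equation}
d_{(M,g)}(\gamma(s),\gamma(t)) \le L_g(\gamma|_{[s,t]}) = \int_s^t \|\dot\gamma(u)\|_g\, du \le C\,(t-s).
\end{equation}
Thus $\gamma$ is $C$-Lipschitz with respect to the Riemannian distance $d_{(M,g)}$.

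Next I would deduce that $\gamma$ converges as $t \to \beta^-$. Fixing $t_0 \in (\alpha,\beta)$, the estimate above shows that $\gamma([t_0,\beta))$ is bounded, being contained in the $d_{(M,g)}$-ball of radius $C(\beta - t_0)$ centred at $\gamma(t_0)$. Since $(M,g)$ is geodesically complete, by the Hopf--Rinow theorem it is complete as a metric space and every closed bounded set is compact; hence the closure of $\gamma([t_0,\beta))$ is compact. Moreover, for any sequence $t_n \uparrow \beta$ the points $\gamma(t_n)$ form a Cauchy sequence, since $d_{(M,g)}(\gamma(t_n),\gamma(t_m)) \le C|t_n - t_m|$, so by completeness they converge; the same Lipschitz bound makes the limit independent of the chosen sequence, and therefore $q := \lim_{t \to \beta^-}\gamma(t)$ exists in $M$.

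Finally I would extend $\gamma$ past $\beta$, contradicting maximality. By the local existence and uniqueness theorem for the Cauchy problem \eqref{CP-M} at the point $q$ (guaranteed by smoothness of $A$), there are $\varepsilon > 0$ and a solution $\eta : (\beta - \varepsilon, \beta + \varepsilon) \to M$ with $\eta(\beta) = q$. Define $\tilde\gamma$ on $(\alpha,\beta+\varepsilon)$ by $\tilde\gamma = \gamma$ on $(\alpha,\beta)$, $\tilde\gamma(\beta) = q$, and $\tilde\gamma = \eta$ on $[\beta,\beta+\varepsilon)$. Continuity at $\beta$ holds by construction, and since $\dot{\tilde\gamma}(t) = A(\tilde\gamma(t))$ on both sides while $\lim_{t\to\beta^-}A(\gamma(t)) = A(q) = \dot\eta(\beta)$ by continuity of $A$, the one-sided derivatives match at $\beta$ and the concatenation is a $C^1$ solution of \eqref{CPVF} on the strictly larger interval $(\alpha,\beta+\varepsilon)$. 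This properly extends $\gamma$, contradicting its maximality; hence $\beta = +\infty$, and symmetrically $\alpha = -\infty$.

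I expect the main obstacle to be the last step: verifying that the concatenation $\tilde\gamma$ is genuinely a single $C^1$ solution, i.e. that the left and right derivatives agree at $\beta$, and, more subtly, that the limit $q$ is well defined independently of the approximating sequence. Both points rest on the Lipschitz estimate together with the continuity of $A$, so once the speed bound and the Hopf--Rinow compactness are in hand, the remaining work is a careful but routine gluing argument combined with an invocation of local uniqueness for \eqref{CP-M}.
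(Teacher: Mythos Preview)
Your proof is correct and takes a somewhat different route from the paper's. Both argue by contradiction and invoke Hopf--Rinow, but the logical structure differs. The paper assumes $b<\infty$, cites O'Neill's escape lemma (a maximal integral curve confined to a compact set must be complete) to conclude that $t\mapsto d(\gamma(t),\gamma(t_0))$ is \emph{unbounded}, and then reaches a contradiction by combining the arc-length inequality $d(\gamma(t_n),\gamma(t_0))\le s(t_n)-s(t_0)$ with Lagrange's mean value theorem to produce points where $\|A\|_g$ is arbitrarily large. You instead establish the Lipschitz bound $d_{(M,g)}(\gamma(s),\gamma(t))\le C|t-s|$ directly, use metric completeness to extract the limit $q=\lim_{t\to\beta^-}\gamma(t)$, and glue with a local solution through $q$---in effect proving the escape lemma by hand rather than citing it. Your approach is more self-contained; the paper's is shorter once one is willing to quote O'Neill. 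The only point worth tightening in your argument is the gluing: knowing $\lim_{t\to\beta^-}\dot\gamma(t)=A(q)$ does not by itself give the left derivative of $\tilde\gamma$ at $\beta$; one needs the elementary chart-local fact (via the mean value theorem, or equivalently $\gamma^i(t)-q^i=-\int_t^\beta A^i(\gamma(u))\,du$) that a continuous function whose derivative has a limit acquires that limit as a one-sided derivative. You already flag this, and it is routine.
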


\begin{remark}\label{remcompleteness}
 The thesis of the lemma is automatically satisfied for a smooth field in the case of compact manifolds (for instance as consequence of remark \ref{compact-finiteI} and  lemma \ref{lemmaC}, but the result is elementary and valid also in absence of metric $g$). Yet, assuming that  $A$ is {\em $C^\infty$-bounded} (see definition \ref{def22} below), the remaining hypotheses are true  for  manifolds of bounded geometry,  as a consequence of lemma \ref{lemmaC}. Hence the thesis of lemma \ref{lemma-complete-solutions} is valid also in this case. \hfill $\blacksquare$
\end{remark}

\section{Feller semigroups and Chernoff approximations for diffusions on Riemannian manifolds}\label{sec3}
 This section is devoted to the study of diffusions on Riemannian manifolds $(M,g)$ of bounded geometry. We consider second-order elliptic operators $L_0: C^\infty(M) \to C^\infty(M)$ of the form \eqref{Hoperator0} proving that they admit an extension $L:D(L)\subset C_0(M)\to C_0(M)$ that generates a Feller semigroup $(e^{tL})_{t\in \bR^+}$ on $C_0(M)$. We also provide a family of operator-cores for $L$. This result is finally applied in section \ref{sez3.3} to the construction of Chernoff approximations for the semigroup $(e^{tL})_{t\in \bR^+}$ in terms of a family of shift operators.
\subsection{Relevant operators and subspaces of $C_0(M)$}
Let $(M,g)$ be a $d$-dimensional $C^\infty$ connected Riemannian manifold which we also  assume to be  geodesically complete.
Let $\{A_k\}_{k=0, 1,...,r}$ be a family of $C^\infty$ vector fields on $M$. We start by considering   the second order differential operator $L_0: C^\infty(M) \to C^\infty(M)$
\begin{equation}\label{Hoperator0}
(L_0f)(x):=\frac{1}{2}\sum_{k =1}^rA_k(A_k f)(x)+(A_0f)(x), \qquad x\in M\:, \quad f \in C^\infty(M)
\end{equation}
In every  local coordinate neighbourhood $U$ containing $x$, if $\sigma^i_k(x)$ are the components of the vector $A_k$, the operator $L_0$  can be represented by the differential operator 
\begin{equation}\label{op-coordinates}
(L_0f)(x)=\frac{1}{2} \sum_{i, j}a^{ij} (x)\frac{\partial ^2}{\partial x^i\partial x^j} f(x)+\sum_{i}b^i(x)\frac{\partial }{\partial x^i}f(x), \qquad x\in U,\end{equation}
with $b^i(x)=\sigma_0^i(x)+\frac{1}{2}\sum_{j,k}\sigma^j_k(x)\frac{\partial}{\partial x^j}\sigma ^i_k(x)$ and $a^{ij} (x)=\sum _k\sigma _k^i(x)\sigma_k^j(x)$ are the entries of a
 positive semidefinite matrix.

$(L_0 + c): C^\infty(M) \to C(M)$ with $L_0$ taking the form  (\ref{op-coordinates})
 in every coordinate patch, and $c \in C(M)$ used as a multiplicative operator,
  is said to be 
{\bf elliptic at $x\in M$}  if the matrix of coefficients $a^{ij}(x)$ is positive semidefinite and  {\em non-singular} in every local coordinate system of $M$ around $x$. If this condition holds for every $x\in M$, then $L_0+c$ 
is said to be {\bf elliptic}.
It is easy to see that $L_0+c$ is elliptic   if the matrices of coefficients $a^{ij}$ are positive semidefinite and non-singular  in every chart of  an atlas of $M$.
\begin{remark} If $A_k$, $k=0,\ldots r$, are smooth vector fields on the smooth manifold $M$, then the 2nd order operator
 $L_0+c:= \frac{1}{2}\sum_{i=1}^r A_iA_i + A_0 +c$ is elliptic at $p\in M$ if and only if the vector fields $A_k$, with $k=1,\ldots, r$, define a  set of generators of $T_pM$.
 (In particular, ellipticity requires $r \geq d:= \dim M$ necessarily). 
In order to prove this fact, it is sufficient to notice that $a^{ij} (p)=\sum _k\sigma _k^i(p)\sigma_k^j(p)$ is automatically positive semidefinite,  hence ellipticity  at $p$ is equivalent to 
\begin{equation}\label{ellipticp}\sum_{k=1}^r   \langle \sigma_k(p), \omega \rangle    \sigma_k(p)  =0 \quad \mbox{iff}\quad \omega=0 \quad \mbox{when 
$\omega \in T_p^*M$}\:,\end{equation}
 where $\langle \cdot, \cdot \rangle$ is the standard pairing on $T_pM \times T^*_pM$ and (\ref{ellipticp}) holds iff $\{A_j(p)\}_{j=1,\ldots,d}$ generates $T_pM$. $\hfill \blacksquare$
\end{remark}

$L_0+c$ is said {\bf uniformly elliptic} (with respect to the metric $g$) if there is a costant  $C>0$ such that
$$\sum_{i,j=1}^n a^{ij}(x) \xi_i \xi_j \geq C\sum_{i,j=1}^n g^{ij}(x) \xi_i \xi_j  \quad \mbox{for every $\xi_k\in \bR$, $k=1,\ldots, d$, and every coordinate patch over $M$.}$$
It is easy to see that if the condition above is true for the local charts of an atlas of $M$ and a given $C>0$, then it is true for all local charts of $M$ for the same $C$. 
\begin{remark}
It is elementary to prove that, if $L_0+c$ is elliptic and $M$ is compact, then $L_0+c$ is uniformly elliptic. \hfill $\blacksquare$
\end{remark}

In general,  the space  $C_c^\infty(M)$ is dense in $C_0(M)$.

\begin{proposition}\label{corollaryD}  If $M$ is a smooth manifold, then $C_c^\infty(M)$ 
is dense in $C_0(M)$ in the norm $||\cdot||_\infty$.
\end{proposition}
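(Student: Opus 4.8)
The plan is to prove density in two stages: first replace an arbitrary $f\in C_0(M)$ by a continuous function of compact support that is uniformly close to $f$, and then smooth the latter by a partition-of-unity mollification argument, keeping the support compact and the uniform error small.

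First I would handle the reduction to compact support. Fix $f\in C_0(M)$ and $\varepsilon>0$. By the defining property of $C_0(M)$ there is a compact set $K\subset M$ with $|f(x)|<\varepsilon/2$ for all $x\in K^c$. Since $M$ is a smooth (hence locally compact, Hausdorff, second countable) manifold, $K$ admits a relatively compact open neighbourhood, and one can construct a smooth cutoff $\chi\in C_c^\infty(M)$ with $0\le \chi\le 1$ and $\chi\equiv 1$ on $K$ (existence of such bump functions follows from the smooth Urysohn lemma and the existence of smooth partitions of unity on manifolds). Then $h:=\chi f$ is continuous with compact support contained in $\mathrm{supp}\,\chi$, and since $1-\chi$ vanishes on $K$ and is bounded by $1$, we get $\|f-h\|_\infty=\|(1-\chi)f\|_\infty\le \sup_{K^c}|f|<\varepsilon/2$. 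Thus it suffices to approximate $h\in C_c(M)$ by elements of $C_c^\infty(M)$ within $\varepsilon/2$.

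The second and main step is to smooth $h$. Cover the compact set $\mathrm{supp}\,h$ by finitely many coordinate charts $(U_i,\varphi_i)$, $i=1,\dots,N$, whose domains we may take relatively compact, and choose a smooth partition of unity $\{\rho_i\}_{i=1}^N$ subordinate to this cover with $\sum_i \rho_i\equiv 1$ on $\mathrm{supp}\,h$, so that $h=\sum_{i=1}^N \rho_i h$ with each $\rho_i h$ continuous and compactly supported inside $U_i$. For each $i$, transport $\rho_i h$ to $\bR^d$ via $\varphi_i$, mollify the resulting compactly supported continuous function by convolution with a standard mollifier of small width $\delta$, and pull the result back by $\varphi_i^{-1}$. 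By uniform continuity of the compactly supported transported function, the convolution converges uniformly as $\delta\to 0$; and for $\delta$ small enough the mollified function still has support inside $\varphi_i(U_i)$, so its pullback $g_i\in C_c^\infty(M)$ has support in $U_i$. Setting $g:=\sum_{i=1}^N g_i\in C_c^\infty(M)$ and choosing $\delta$ small enough that each $\|\rho_i h-g_i\|_\infty<\varepsilon/(2N)$, we obtain $\|h-g\|_\infty\le \sum_i\|\rho_i h-g_i\|_\infty<\varepsilon/2$, whence $\|f-g\|_\infty<\varepsilon$.

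The only delicate point is the mollification step: one must verify that local convolution on $\bR^d$ preserves (sufficiently small) support and that the finitely many smoothed pieces patch together with uniform control of the total error — this is exactly where the finiteness of the subcover (from compactness of $\mathrm{supp}\,h$) and the uniform continuity of each $\rho_i h$ are used. I note that, alternatively, the whole statement follows at once from the Stone--Weierstrass theorem for $C_0$ of a locally compact Hausdorff space, since $C_c^\infty(M)$ is a subalgebra of $C_0(M)$ that is closed under complex conjugation, separates points, and is nowhere-vanishing (both properties witnessed by smooth bump functions); but I would favour the constructive argument above, as it makes the approximating functions explicit.
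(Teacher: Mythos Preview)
Your argument is correct. The overall two–step shape (cutoff plus local smoothing against a partition of unity) is the same as in the paper, but you run the steps in the opposite order and with a different local tool. The paper first approximates an arbitrary $f\in C_0(M)$ by an element of $C^\infty(M)\cap C_0(M)$, using a \emph{locally finite} partition of unity $\{\chi_n\}$ subordinate to a countable atlas and, on each chart, Stone--Weierstrass to pick a polynomial $p^{(n,\varepsilon_n)}$ with $\|f\chi_n-p^{(n,\varepsilon_n)}\chi_n\|_\infty<\varepsilon/2^{n+1}$; it then truncates the resulting smooth $C_0$ function by a single smooth Urysohn cutoff to land in $C_c^\infty(M)$. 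You instead truncate first, producing $h\in C_c(M)$, and then smooth: the gain is that $\mathrm{supp}\,h$ is compact, so you only need a \emph{finite} partition of unity and can use mollification in each chart with a trivial $\varepsilon/(2N)$ budget. This avoids the geometric-series bookkeeping and the positivity trick $0\le p^{(n,\varepsilon)}\le f$ that the paper needs to ensure the infinite sum stays in $C_0(M)$. Your closing Stone--Weierstrass remark for $C_0$ of a locally compact Hausdorff space is also a valid and even shorter route; the paper uses Stone--Weierstrass only locally, chart by chart, rather than globally.
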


\begin{proof}
See the appendix.
\end{proof}

\subsection{Generators of Feller semigroups on Riemannian manifolds}
This section is devoted to the construction of generators of Feller semigroup on $C_0(M)$ as well as to the description of their cores. In the following we shall always assume that $(M,g) $ is a smooth manifold of bounded geometry. We start by giving the definition of some relevant subspaces of smooth functions.
\begin{definition}
Let $(M,g)$ be a manifold of bounded geometry. A  function $f:M\to \bR$ is said {\bf $C^k$-bounded} if $f\in C^k(M)  $ and if  for every $r_0 \in (0, I_{(M,g)})$ and every  multiindex $\alpha$, with $|\alpha |\leq k$ there is a constant   $C_\alpha <\infty$ such that $|\partial ^\alpha _xf(x)|\leq C_\alpha <+\infty$  in every local Riemannian chart $(B_{r_0}^{(M,g)}, \exp^{-1}_{p,N})$ centered at every $p\in M$. \\
A function $f:M\to \bR$ is said {\bf $C^\infty$-bounded} if $f$ is  $C^k$-bounded for any $k\geq 0$.\\
The space of $C^k$-bounded functions on $M$ is denoted  with the symbol $C^k_b(M)$ for $k=0,1,\ldots, \infty$. \hfill $\blacksquare$
\end{definition}

\begin{remark}
It is easy to prove \cite{shubin} that $f\in C^k (M)$ is $C^k$-bounded iff there exists a constant $C< +\infty$ such that the covariant derivative  $\| \nabla ^k f\|_\infty<C$.  
\end{remark}

Let us consider the operator $L_0$ (\ref{Hoperator0}) and define  $L_1$ as its restriction to one of the linear subspaces $D_k\subset C_0(M)$ 
\begin{equation}\label{domainD}
    D_k:=\{f \in C_0(M)\cap C^\infty(M) \cap  C_b^k (M) \:\: |\: \: L_0f\in C_0(M)\} \quad \mbox{for $k=0,1,\ldots, \infty$.}
\end{equation}
 Each
$D_k$ is non-trivial and dense in $C_0(M) $ since $C_c^\infty(M) \subset D_k $ and by proposition \ref{corollaryD}. Actually, for every given $k$,  $L_1$
  satisfies hypotheses (a) and (b) of theorem \ref{theoremFellergenerator}, the latter can be  trivially proved by direct inspection.
If  we are able to prove that also hypothesis (c) of theorem \ref{theoremFellergenerator} is fulfilled (there  exists a $\lambda >0$ such that $Ran (L_1-\lambda I)$ is dense in $C_0(M)$), 
then theorem \ref{theoremFellergenerator} proves that $L:= \overline{L_1}$  is the generator of a Feller semigroup $(V(t))_{t\geq 0}$ on $C_0(M)$.  

\begin{remark}\label{exampleRd2} In the case  $M=\bR^d$ and the coefficients $a^{ij}, b^j$ of the differential operator \eqref{generator1}  are bounded and globally Lipschitz (their smoothness is guaranteed by the assumptions that the vector fields $A_k$ are smooth), probabilistic arguments \cite{RogWil} provide the existence of a Feller semigroup. The associated diffusion process is constructed in terms of the martingale solution of the stochastic PDE \eqref{SPDE1}.   In this case the representation formula \eqref{prorep} allows to prove that the generator restricted on the space $C^\infty _c(\bR^d)$ is actually  given by the second order operator \eqref{generator1}. \\
Analogous results can be obtained in the   case where the manifold $M$ is compact,  extensively studied, e.g., in \cite{IkeWat}.  If $A_j$, $j=0,...,r$ are smooth vector fields, it is possible to construct a diffusion process $X=(X(t))$ solution of the stochastic PDE 
$$dX(t)=\sum_{j=1}^r A_j(X(t))\circ dB^j(t) +A_0(X(t))dt$$
where $\circ$ denotes the Stratonovich stochastic integral. 
The action of the Feller semigroup $V(t):C(M)\to C(M)$ given by $V(t)f(x)=\bE^x[f(X(t))]$ and the   generator extends the operator \eqref{Hoperator0} (see \cite{IkeWat,Hsu} for details).

However, we stress that  this technique does not directly  provide a core for the generator.
\end{remark}

 This section presents some sufficient conditions for the validity of the hypotesis (c) in Riemannian manifolds different form $\bR^d$.  
\begin{definition}\label{def22}\cite{shubin} Let $(M,g)$ a manifold of bounded geometry.  A differential operator of order $n$, $P : C^\infty(M) \to C^\infty(M)$, in local coordinates,
$$(P f)(x)= \sum_{|\alpha| \leq n}P_\alpha(x)  \partial^\alpha_xf$$
 is said to be 
{\bf $C^\infty$-bounded} if, for every $r_0 \in (0, I_{(M,g)})$ and every pair of multiindeces $\alpha, \beta $ there is a constant $C_{\alpha,\beta}\geq 0$ such that $|\partial ^{\beta}_xP_\alpha(x)| \leq  C_{\alpha, \beta}$ in every local Riemannian chart $(B_{r_0}^{(M,g)}, \exp^{-1}_{p,N})$ centered at every $p\in M$.
\end{definition}

\begin{remark} $\null$\\
{\bf(1)}  It is possible to prove \cite{shubin}
 that a $C^\infty$-bounded vector field $A$ on $M$ fulfills the following conditions   
 $$\| \|\nabla^{(g)k}  A\|_g\|_\infty \leq a_k\:, \quad k=0,1,\ldots\:.$$
 for some constants $a_k<+\infty$, $k=0,1,\ldots$.\\
{\bf(2)} It is possible to prove \cite{shubin} that if  a vector field $A$ on $M$  is  {\bf $C^\infty$-bounded},
then every differential operator given by the $p$-th power $A^p$ is  $C^\infty$-bounded. Obviously, linear combinations of $C^\infty$-bounded operators are $C^\infty$-bounded operators. Therefore the operator $L_0$ (\ref{Hoperator0}) is $C^\infty$-bounded if $(M,g)$ is of bounded geometry  and  the smooth vector fields $A_j$ are $C^\infty$-bounded for $j=0,\ldots,r$. \\
{\bf (3)}   
 Every $C^\infty$ vector field on a compact Riemannian manifold is automatically $C^\infty$-bounded.
Analogously, the operator $L_0$ (\ref{Hoperator0}) is $C^\infty$-bounded in the case the smooth Riemannian manifold $M$  is {\em compact} and the fields $\{ A_j\}_{j=0,\ldots ,r}$ are smooth.$\hfill \blacksquare$\end{remark}

From now on $\nabla^{(g)} \cdot A$ denotes the scalar field called {\bf covariant divergence} of $A$ completely defined in local coordinates around $p\in M$ as 
$$\nabla^{(g)}\cdot A := \sum_{j=1}^d (\nabla^{(g)}_j A)^j
 = \sum_{j=1}^d\left(  \partial_j A^j|_p+A^j\partial_j\log \sqrt{|g|}\right)\:.$$

Let us move on to state and prove the pivotal technical result of this section which we will use to prove that its closure $L= \overline{L_1}$ generates a Feller semigroup.  Everything relies upon the following technical result proved in the appendix and based on fundamental 
achievements by Shubin (Theorem 2.2  in \cite{shubin}), some of them  already established in \cite{Kor} where analytic semigroups in $L^p$-spaces 
 are in particular studied in manifolds of bounded geometry.
\begin{proposition}\label{teoL} Let $(M,g)$ be a smooth Riemannian manifold of bounded geometry and consider 
a uniformly elliptic 2nd order differential operator 
$L_0 :C^\infty(M) \to C^{\infty}(M)$ be of the form (\ref{Hoperator0}), where the $r\geq d$  real smooth  vector fields  $A_i$ 
are  $C^\infty$-bounded  and $A_0$ is defined
as 
\begin{equation}A_0 :=\frac{1}{2} \sum_{i=1}^r (\nabla^{(g)} \cdot A_i) A_i\:.\label{linkA}\end{equation}
Then,
\begin{itemize}
\item[(i)] $L := \overline{L_1}$ -- 
with $L_1 := L_0|_{D_k}$  and $D_k$ defined in (\ref{domainD}) -- is the  generator 
of a  Feller semigroup in $C_0(M)$ for every fixed $k=0,1, \ldots, \infty$. 

\item[(ii)] Both the generator  $L$ and the generated semigroup are  independent of $k$.
\end{itemize}
\end{proposition}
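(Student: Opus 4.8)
The plan is to verify the three hypotheses of Theorem \ref{theoremFellergenerator} for the operator $L_1 = L_0|_{D_k}$, establish the closedness needed for the closure $L = \overline{L_1}$ to generate a Feller semigroup, and then argue the $k$-independence separately. As already observed in the text following \eqref{domainD}, hypotheses (a) and (b) are essentially free: density of $D_k$ follows from $C_c^\infty(M) \subset D_k$ together with Proposition \ref{corollaryD}, and the positive maximum principle \eqref{PMP} follows by direct inspection since at an interior maximum $x_0$ with $f(x_0) \geq 0$ the first-order terms $A_0 f$ vanish (gradient is zero) and the second-order part $\frac{1}{2}\sum_k A_k(A_k f)(x_0)$ is a sum of pure second directional derivatives along the $A_k$, each $\leq 0$ at a maximum. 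So the entire weight of the argument falls on hypothesis (c): proving that $\mathrm{Ran}(L_1 - \lambda I)$ is dense in $C_0(M)$ for some $\lambda > 0$.

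For part (i), my strategy is to obtain (c) from the solvability theory for uniformly elliptic operators on manifolds of bounded geometry. First I would invoke Shubin's results (Theorem 2.2 in \cite{shubin}) and the related work \cite{Kor}, which is precisely what the excerpt announces as the foundation. The key point is that the special choice \eqref{linkA}, $A_0 = \frac{1}{2}\sum_i (\nabla^{(g)}\cdot A_i) A_i$, makes $L_0$ formally self-adjoint with respect to the Riemannian volume measure: indeed this choice is exactly what turns $\frac{1}{2}\sum_i A_i(A_i \cdot)$ into $\frac{1}{2}\sum_i \nabla^{(g)}\cdot(\,\cdot\, A_i A_i)$-type divergence form, so that $L_0 = \frac{1}{2}\sum_i (A_i^* A_i)$ up to sign in the $L^2(M, dV_g)$ pairing. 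Combined with $C^\infty$-boundedness of the $A_i$ and the uniform estimates \eqref{estimateg}, \eqref{estimateg2}, \eqref{estimateg3} on the metric and Christoffel symbols, this yields a uniformly elliptic $C^\infty$-bounded operator to which Shubin's $L^p$ and Sobolev mapping theory applies. From there I would solve $(\lambda I - L_0) u = f$ for $\lambda > 0$ large and $f$ in a dense class (say $C_c^\infty(M)$), show the solution $u$ lies in an appropriate Sobolev space, and use bounded-geometry Sobolev embedding to conclude $u \in C_0(M)$ with $L_0 u \in C_0(M)$, hence $u \in D_k$. This produces $f \in \mathrm{Ran}(L_1 - \lambda I)$ for a dense set of $f$, giving (c). Theorem \ref{theoremFellergenerator} then immediately delivers that $L = \overline{L_1}$ generates a Feller semigroup.

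For part (ii), I would argue as follows. Since $D_\infty \subset D_k \subset D_0$ for every finite $k$, the corresponding restrictions satisfy $L_0|_{D_\infty} \subset L_0|_{D_k} \subset L_0|_{D_0}$, so their closures are nested: $\overline{L_0|_{D_\infty}} \subset \overline{L_0|_{D_k}} \subset \overline{L_0|_{D_0}}$. By part (i), each of these closures is the generator of a Feller semigroup, hence each is a maximal dissipative (in fact $m$-dissipative, by the Hille–Yosida/Lumer–Phillips characterization underlying Theorem \ref{theoremFellergenerator}) operator. A proper inclusion between two $m$-dissipative operators is impossible, because if $A \subsetneq B$ with both generators then $\mathrm{Ran}(\lambda I - A) = \mathcal{B} = \mathrm{Ran}(\lambda I - B)$ and injectivity of $\lambda I - B$ forces $D(A) = D(B)$. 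Therefore all the nested closures coincide, proving that both $L$ and the generated semigroup are independent of $k$.

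The main obstacle I anticipate is hypothesis (c), specifically the passage from $L^p$/Sobolev solvability for the uniformly elliptic operator (where Shubin's machinery operates) to the $C_0(M)$-statement required by Theorem \ref{theoremFellergenerator}. One must control the solution $u$ of the resolvent equation uniformly up to infinity on a non-compact manifold, showing both $u$ and $L_0 u$ vanish at infinity; this is where bounded geometry is indispensable, since it supplies the uniform-in-$p$ elliptic and embedding constants needed to transfer local regularity into global decay. The self-adjointness forced by \eqref{linkA} is the structural simplification that makes this tractable, and I expect the detailed verification to be relegated to the appendix, exactly as the excerpt indicates.
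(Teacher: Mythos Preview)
Your proposal is correct and follows the same overall architecture as the paper: verify (a), (b), (c) of Theorem~\ref{theoremFellergenerator}, with all the work in (c), exploiting that the special choice \eqref{linkA} makes $L_0|_{C_c^\infty(M)}$ symmetric (indeed $-L_0 = \tfrac12\sum_i A_i^*A_i \geq 0$) so that Shubin's $L^2$ theory applies and $\lambda>0$ lies in the resolvent set of the self-adjoint closure. The one substantive technical divergence is how the $C_0(M)$ conclusion for the resolvent solution $f = R_\lambda h$ is obtained. The paper does \emph{not} go through Sobolev embedding; instead it invokes Shubin's Theorem~2.2, which gives a Schwartz kernel $G(x,y)$ for $R_\lambda$ that is smooth off the diagonal with exponential decay $|\partial_x^\alpha\partial_y^\beta G(x,y)| \leq C_{\alpha,\beta,\delta}\,e^{-\eta\, d_g(x,y)}$. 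Since $h\in C_c^\infty(M)$, the representation $f(x)=\int_M G(x,y)h(y)\,d\mu_g(y)$ then yields $f\in C_0(M)\cap C_b^\infty(M)$ directly by integrating the decay estimate. Your Sobolev route also works on bounded-geometry manifolds (global elliptic regularity puts $f\in W^{k,2}$ for all $k$, uniform Morrey embedding gives $f\in C_b^m$, and $L^2\cap C_b^1\subset C_0$ since balls have volume bounded below), but it is less explicit and requires the extra observation that uniform continuity plus $L^2$-integrability forces decay at infinity---a step you correctly flag as the main obstacle but do not spell out. For (ii), your $m$-dissipativity/range argument is equivalent to the paper's Lemma~\ref{lemmaMN}, which phrases the same maximality via uniqueness of classical solutions of the abstract Cauchy problem.
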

\begin{proof}
(i) What we have to prove is nothing but that the three hypotheses of theorem \ref{theoremFellergenerator}  are satisfied for $L_1 : D_k \to C_0(M)$.
Condition (a) has been established in proposition \ref{corollaryD}. Condition (b) immediatey arises from the form of $L_0$ and the
  ellipticity property it satisfies. Regarding (c), the pivotal result appears in the following lemma  proved in the appendix.

\begin{lemma}\label{propL}
With $(M,g)$ and $A_j$ ($j=0,\ldots,r$) and $L_0$ as in the hypothesis -- in particular $A_0$ 
 as in (\ref{linkA})-- for every $h \in C_c^\infty(M)$
and $\lambda >0$  there exists $f \in   C_0(M) \cap  C_b^\infty(M)$
fulfilling
\begin{equation} L_0f - \lambda f = h\:.\label{EQR2}\end{equation}
\end{lemma}
\begin{proof}
See the appendix.
\end{proof}

Now  observe that, due to lemma \ref{propL}, if $\lambda>0$ and  $h \in C_c^\infty(M)$, there 
is $f \in C_0(M)\cap C_b^\infty(M)$ (hence $f\in D_k$ for all $k=0,1,\ldots, \infty$)
such that $L_0f = \lambda f + h$. This fact can be rephrased to $(L_1 - \lambda I)f = h$. Since $C_c^\infty(M)$ is dense in $C_0(M)$  due to proposition \ref{corollaryD}, we have proved that $Ran(L_1-\lambda I)$ is dense in $C_0(M)$ for $\lambda >0$, demonstrating that also the hypothesis (c) in theorem \ref{theoremFellergenerator}  is satisfied. Let us finally prove (ii). This is consequence of the following general lemma.
\begin{lemma}\label{lemmaMN} Let $M: D(M) \to \mathcal{B}$ and $N : D(N) \to \mathcal{B}$ be two closed densely defined operators in the Banach space $\mathcal{B}$ which are generators of corresponding strongly continuous semigroups. If  $M \subset N$, then $M=N$. 
\end{lemma}
\begin{proof} See the appendix.
\end{proof}
\noindent The proof ends observing that 
$L_0|_{D_{k+1}} \subset L_{0}|_{D_{k}}$
so that $\overline{L_0|_{D_{k+1}}} \subset \overline{L_{0}|_{D_{k}}}$ and both operators are generators of strongly-continuous semigroups on $C_0(M)$. 
The case $D_\infty$ is encompassed since, e.g., $D_\infty \subset D_1$.
\end{proof}
We can finally prove the main result of this section, by  relaxing the requirement on the form of $A_0$.
\begin{theorem}\label{teoL2} Let $(M,g)$ be a smooth Riemannian manifold of bounded geometry and  consider   a 
uniformly elliptic 2nd order differential operator  $L_0 :C^\infty(M) \to C^{\infty}(M)$
 of the form (\ref{Hoperator0}), where $A_0$ and  the $r\geq d$  vector fields  $A_i$ are  real, smooth  and $C^\infty$-bounded.
 Then,
\begin{itemize}
\item[(i)] $L := \overline{L_1}$ -- 
with $L_1 := L_0|_{D_k}$  and $D_k$ defined in (\ref{domainD}) --  is the  generator 
of a  Feller semigroup in $C_0(M)$ for every fixed $k=0,1, \ldots, \infty$. 

\item[(ii)] Both the generator  $L$ and the generated semigroups are independent of $k$.
\end{itemize}
\end{theorem}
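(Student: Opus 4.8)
The plan is to verify the three hypotheses of Theorem \ref{theoremFellergenerator} for $L_1 = L_0|_{D_k}$ with the now unconstrained drift $A_0$, reusing as much as possible of the proof of Proposition \ref{teoL}. Hypotheses (a) and (b) are inherited verbatim: density of $D_k$ in $C_0(M)$ was established in proposition \ref{corollaryD} and does not involve $A_0$, while the positive maximum principle \eqref{PMP} holds for \emph{any} first-order term. Indeed, writing $L_0$ in the coordinate form \eqref{op-coordinates}, at an interior point $x_0$ where $f\in D_k$ attains a nonnegative maximum one has $\partial_i f(x_0)=0$, so every first-order contribution — and in particular $A_0 f$ — vanishes at $x_0$, whereas the principal part $\frac12 a^{ij}(x_0)\partial_i\partial_j f(x_0)$ is $\leq 0$ by positive semidefiniteness of $[a^{ij}]$ and negative semidefiniteness of the Hessian. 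Thus only hypothesis (c) requires new work.

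First I would decompose the generic drift against the distinguished one of Proposition \ref{teoL}. Setting $\tilde A_0 := \frac12\sum_{i=1}^r(\nabla^{(g)}\cdot A_i)A_i$ and $B := A_0 - \tilde A_0$, one writes $L_0 = \tilde L_0 + B$, where $\tilde L_0 := \frac12\sum_k A_kA_k + \tilde A_0$ is exactly the self-adjoint divergence-form operator covered by Proposition \ref{teoL} and $B$ is a first-order operator. Since the covariant divergence of a $C^\infty$-bounded field is again $C^\infty$-bounded on a manifold of bounded geometry (its coordinate expression involves only the bounded quantities controlled by \eqref{estimateg}, \eqref{estimateg2}, \eqref{estimateg3}), the field $B$ is itself $C^\infty$-bounded. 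The task then reduces to re-establishing Lemma \ref{propL} with this extra drift: for some $\lambda>0$ and every $h\in C_c^\infty(M)$, solve $L_0f - \lambda f = h$ with $f\in C_0(M)\cap C_b^\infty(M)$, which yields density of $Ran(L_1 - \lambda I)$.

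The core step is to run the solvability argument of Lemma \ref{propL} for the non-self-adjoint operator $\tilde L_0 + B$. Working in $L^2(M,\mathrm{dvol}_g)$, where $\tilde L_0 = -\frac12\sum_k A_k^*A_k$ is symmetric and nonpositive, the sesquilinear form attached to $\lambda - L_0$ is $a_\lambda(u,v) = \lambda\langle u,v\rangle + \frac12\sum_k\langle A_k u, A_k v\rangle - \langle Bu, v\rangle$. This form is no longer symmetric, but Lax--Milgram requires only coercivity: since $B$ has bounded coordinate coefficients and the $A_k$ span each tangent space by uniform ellipticity, one has $\|Bu\|^2 \leq C\sum_k\|A_k u\|^2$, so Young's inequality gives $|\langle Bu,u\rangle| \leq \frac14\sum_k\|A_k u\|^2 + C'\|u\|^2$ and hence $a_\lambda(u,u) \geq \frac14\sum_k\|A_k u\|^2 + (\lambda - C')\|u\|^2$, which is coercive once $\lambda>C'$. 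Lax--Milgram then produces a weak solution in the form domain, and the elliptic regularity machinery on manifolds of bounded geometry (Shubin's estimates \cite{shubin}, already invoked for Proposition \ref{teoL}, together with \eqref{estimateg}--\eqref{estimateg3}) upgrades it to $f\in C_0(M)\cap C_b^\infty(M)$. Equivalently, one may keep $\tilde L_0$ abstract and factor $\lambda - L_0 = (\lambda - \tilde L)(I - R_\lambda B)$, inverting by a Neumann series and using that the resolvent $R_\lambda = (\lambda-\tilde L)^{-1}$ of the Feller generator from Proposition \ref{teoL} regularizes enough to make $\|R_\lambda B\|<1$ for large $\lambda$.

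The main obstacle is precisely the control of the unbounded first-order perturbation $B$: unlike a bounded potential, a drift cannot be absorbed by elementary perturbation theory and threatens both the coercivity and the $C_b^\infty$-regularity of the solution. The decisive point making it harmless is the relative boundedness of $B$ with respect to the principal part — guaranteed by uniform ellipticity, which forces $A_1,\dots,A_r$ to span every tangent space — so that the drift can be reabsorbed into the quadratic part at the cost of enlarging $\lambda$; note that hypothesis (c) demands only a single such $\lambda$. Finally, part (ii) is identical to its counterpart in Proposition \ref{teoL}: one has $L_0|_{D_{k+1}}\subset L_0|_{D_k}$, hence $\overline{L_0|_{D_{k+1}}}\subset\overline{L_0|_{D_k}}$, and since both closures generate strongly continuous semigroups on $C_0(M)$, Lemma \ref{lemmaMN} forces them to coincide; the case $k=\infty$ follows from $D_\infty\subset D_1$.
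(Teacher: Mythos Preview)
Your proposal is essentially the paper's proof. The paper likewise decomposes $A_0 = \tfrac12\sum_i(\nabla^{(g)}\cdot A_i)A_i + B$, packages the needed control of $B$ as the pointwise dominance condition $\langle B,\xi\rangle^2 \leq c\sum_i\langle A_i,\xi\rangle^2$ (their Lemma \ref{propL2}, Eq.~\eqref{dominance}), and then proves in the body of Theorem \ref{teoL2} that this dominance is automatic from uniform ellipticity plus $C^\infty$-boundedness of $B$ --- exactly your ``$A_1,\dots,A_r$ span each tangent space'' observation, with the explicit constant $c=\|\,\|B\|_g\|_\infty^2/C$.

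The only substantive difference is in how the $L^2$-solvability step is carried out. You propose Lax--Milgram coercivity of the sesquilinear form $a_\lambda$ for large $\lambda$ (with the Young-inequality absorption of $\langle Bu,u\rangle$), whereas the paper instead integrates the dominance condition to $\|B\psi\|^2\leq -2c\langle\psi,L'\psi\rangle$, converts this via spectral calculus into the relative bound $\|B\psi\|^2\leq a\|L'\psi\|^2+b\|\psi\|^2$ valid for \emph{every} $a>0$, and then invokes Reed--Simon Theorem X.54 on perturbation of analytic semigroups to conclude that $L'+B$ generates a semigroup on $L^2$, hence has nonempty resolvent in $(0,\infty)$. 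Both routes land at the same place --- a resolvent to which Shubin's off-diagonal kernel decay (Theorem 2.2 in \cite{shubin}) applies, yielding $f\in C_0(M)\cap C_b^\infty(M)$ --- and your Lax--Milgram argument is arguably the more direct of the two. Part (ii) is handled identically.
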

\begin{proof}
(ii) has the same proof as that of (ii) in proposition \ref{teoL}.
The proof of (i) is based on the following technical result.

\begin{lemma}\label{propL2} With $(M,g)$ and $A_j$ ($j=1,\ldots,r$) and $L_0$ as in the hypothesis assume that
\begin{equation}
A_0 := \frac{1}{2}\sum_{i=1}^r (\nabla^{(g)}\cdot A_i) A_i + B \label{A0new}\:,
\end{equation}
for  a real $C^\infty$-bounded vector field $B$.
If there exists $c>0$ independent of the used local chart around $x\in M$ such that 
\begin{equation}\label{dominance}
\sum_{a,b=1}^d B^a(x)B^b(x)\xi_a\xi_b \leq c\sum_{a,b=1}^d  \sum_{i=1}^r  A^a_i(x)A^b_i(x)\xi_a\xi_b \quad \mbox{for every 
$\xi_k \in \bR$ and every $x\in M$}
\end{equation}
then $L := \overline{L_1}$ -- with $L_1 := L_0|_{D_k}$ and $D_k$ defined in  (\ref{domainD})  is the  generator 
of a  Feller semigroup in $C_0(M)$.
\end{lemma}
\begin{proof}
See the appendix
\end{proof}
In view of lemma \ref{propL2}, to prove (i), it is sufficient to prove that (\ref{dominance}) is always satisfied however we choose the real smooth $C^\infty$-bounded vector field $B$. If we think of the numbers $\xi_k$ as the components of a form $\xi \in T^*_xM$, dividing both sides for $||\xi||^2_g\neq 0$, the inequality can be rephrased to, where $\langle \cdot, \cdot \rangle$ is the standard pairing on $T_xM\times T^*_xM$,
$$\frac{|\langle B(x), \xi(x) \rangle|^2}{||\xi||_g^2}\leq c \frac{\sum_{i=1}^r |\langle A_i(x), \xi(x) \rangle|^2}{||\xi||_g^2}\:.$$
The left-hand side above satisfies
$$\frac{|\langle B(x), \xi(x) \rangle|^2}{||\xi||_g^2} \leq \frac{||B(x)||^2_g||\xi||^2_g}{||\xi||^2_g} \leq || \|B\|_g||^2_\infty < +\infty$$
whereas the right-hand side fulfils 
$$\frac{\sum_{i=1}^r |\langle A_i(x), \xi(x) \rangle|^2}{||\xi||_g^2} \geq C \frac{||\xi||_g^2}{||\xi||_g^2} =C >0$$
just in view of the uniformly ellipticity condition. Choosing $c:=  || \|B\|_g||^2_\infty /C$, which is necessarily finite,  (\ref{dominance})  is satisfied.
\end{proof}
To conclude, we prove that we can modify $L_0$ by adding a  zero-order term in a certain class of continuous functions
preserving the results above. 
\begin{theorem}\label{teoL3} Let $(M,g)$ be a smooth Riemannian manifold of bounded geometry and  consider   a 
uniformly elliptic 2nd order differential operator  $L_{0c} :C^\infty(M) \to C(M)$
 of the form 
\begin{equation}L_{0c}:= L_0 + c\:, \label{L0u}\end{equation}
where $L_0$ is the operator defined in theorem \ref{teoL2} and  $c\in C^0_b(M)$ being bounded and continuous, defines
 a multiplicative operator 
$c\in \mathscr{L}(C_0(M))$.
 Then,
\begin{itemize}
\item[(i)] Assuming additionally that $c(x) \leq 0$ for all $x\in M$ we obtain that $L := \overline{L_{1c}}$   -- 
with $L_{1c} := L_{0c}|_{D_k}$ and $D_k$ defined in (\ref{domainD}) --  is the  generator 
of a  Feller semigroup in $C_0(M)$ for every fixed $k=0,1, \ldots, \infty$. 

\item[(ii)] Under condition $c(x) \leq 0$ for all $x\in M$ both the generator  $L$ and the generated semigroups are independent of $k$.

\item[(iii)] 
If condition $c(x) \leq 0$ for all $x\in M$ does not hold, then $L$ as in (i) is still the generator of a strongly continuous semigroup in $C_0(M)$ for every fixed $k=0,1, \ldots, \infty$, and  (ii) is still valid.
\end{itemize}

\end{theorem}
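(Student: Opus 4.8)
The overall plan is to reduce everything to Theorem~\ref{teoL2} by exploiting that $c$ is a bounded, everywhere-defined multiplication operator, $c\in\mathscr{L}(C_0(M))$. Consequently $L_{1c}=L_0|_{D_k}+c=L_1+c$, and since adding a bounded everywhere-defined operator commutes with the operation of closure, one has $\overline{L_{1c}}=\overline{L_1}+c=L+c$, where $L=\overline{L_1}$ is the Feller generator furnished by Theorem~\ref{teoL2}. Thus in all three items the operator to be analysed is the bounded perturbation $L+c$ of the already-constructed generator $L$.

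For item (i), I would verify the three hypotheses of Theorem~\ref{theoremFellergenerator} for $L_{1c}:D_k\to C_0(M)$. Hypothesis (a) is already known, since $C_c^\infty(M)\subset D_k$ is dense by Proposition~\ref{corollaryD}. Hypothesis (b), the positive maximum principle, is exactly where the sign condition $c\le 0$ enters: if $f\in D_k$ attains a non-negative maximum at $x_0$, then $(L_0f)(x_0)\le 0$ by the ellipticity argument already used in the proof of Proposition~\ref{teoL}, while $c(x_0)f(x_0)\le 0$ because $c(x_0)\le 0$ and $f(x_0)\ge 0$; hence $(L_{0c}f)(x_0)\le 0$. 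For hypothesis (c) I would use boundedness of $c$: since $L$ generates a contraction semigroup, $(0,+\infty)\subset\rho(L)$ with $\|R(\lambda,L)\|\le 1/\lambda$, so for $\lambda>\|c\|_\infty$ the factorisation $\lambda I-(L+c)=(\lambda I-L)(I-R(\lambda,L)c)$ together with a Neumann series for $I-R(\lambda,L)c$ shows that $\lambda I-(L+c)$ maps $D(L)$ bijectively onto $C_0(M)$. Because each $D_k$ is a core for $L$ and $c$ is bounded, the graph norms of $L$ and $L+c$ are equivalent, so $D_k$ is also a core for $L+c$; the continuity of $\lambda I-(L+c)$ in the graph norm then yields that $Ran(L_{1c}-\lambda I)$ is dense in $C_0(M)$. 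Theorem~\ref{theoremFellergenerator} now gives that $\overline{L_{1c}}=L+c$ generates a Feller semigroup.

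Item (ii) I would prove exactly as item (ii) of Proposition~\ref{teoL}: the inclusions $L_{0c}|_{D_{k+1}}\subset L_{0c}|_{D_k}$ pass to closures, both of which generate strongly continuous semigroups by (i), so Lemma~\ref{lemmaMN} forces them to coincide; the case $k=\infty$ is covered through $D_\infty\subset D_1$. For item (iii), when $c\le 0$ is dropped the positive maximum principle may fail, so the route through Theorem~\ref{theoremFellergenerator} is closed and the resulting semigroup need not be positive or contractive. However $c\in\mathscr{L}(C_0(M))$, so the Bounded Perturbation Theorem (see \cite{EN1}) applies directly: $L$ generates a $C_0$-semigroup and $c$ is bounded, hence $L+c=\overline{L_{1c}}$ on $D(L)$ generates a $C_0$-semigroup. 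The independence statement (ii) then follows from the very same Lemma~\ref{lemmaMN} argument, which only requires that each closure be a $C_0$-generator.

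The genuinely non-formal point, and the step I expect to be the main obstacle, is hypothesis (c) in the $c\le 0$ case: I must transfer the surjectivity of $\lambda I-(L+c)$ on the full domain $D(L)$ to density of the range of its restriction to the core $D_k$. This hinges on $D_k$ being a core for $L$ (established in Theorem~\ref{teoL2}) together with the boundedness of $c$, which guarantees the equivalence of the graph norms of $L$ and $L+c$. Everything else is either a direct sign check---$c\le 0$ providing the positive maximum principle, and thereby, via Theorem~\ref{theoremFellergenerator}, the positivity and contractivity of the semigroup---or a verbatim reuse of Lemma~\ref{lemmaMN} and the earlier ellipticity estimates.
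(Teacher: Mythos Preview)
Your proof is correct but follows a different route from the paper's. For (i), the paper does not re-verify the hypotheses of Theorem~\ref{theoremFellergenerator}; instead it first shows that the multiplicative operator $-c$ is accretive (via an explicit tangent functional), then invokes a perturbation lemma from \cite{RS2} to conclude that $\overline{L_0|_{D_k}}+c$ generates a contraction semigroup, and finally obtains positivity separately through the Trotter product formula $e^{t(L+c)}=\lim_n(e^{tL/n}e^{tc/n})^n$, using that both factors are positive. Your approach---checking the positive maximum principle directly (where $c\le 0$ enters) and establishing the range condition by a Neumann-series argument combined with the fact that $D_k$ is a core---stays entirely within the framework already set up in the paper and avoids the external Reed--Simon lemma and the Trotter formula; it is arguably more self-contained. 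For (iii), the paper uses a different and rather concrete trick: writing $c=\tilde c+\sup_x c(x)$ with $\tilde c\le 0$ reduces to (i), and the resulting semigroup is explicitly $e^{t\sup_x c(x)}V_{\tilde c}(t)$. Your appeal to the bounded perturbation theorem is equally valid but more abstract and does not yield this explicit form. Item (ii) is handled identically in both.
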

\begin{proof}
(i) Let us start the proof by establishing that the multiplicative operator $-c$ is {\em accretive}  (\cite{RS2} Definition on p. 240). In fact, if 
$f\in C_0(M)$, let $p\in M$ be such that $|f(p)|= \sup_{x\in M} |f(x)|$.
Let us construct a {\em normalized functional} $\lambda \in C_0(M)'$ {\em tangent} to $f\in C_0(M)$ as
$$\lambda(h) :=  \overline{f(p)} h(p) \:, \quad h \in C_0(M)\:,$$
It holds trivially $||\lambda||=||f||$ and $\lambda(f) = ||f||^2$ so that $\lambda$ is normalized and tangent to $f$, and also $\lambda((-c)f)\geq 0$
 (notice that $c\leq 0$), so that $-c$ is accretive. At this juncture we can apply the lemma on p. 244 of \cite{RS2}
with $0\leq a<1/2$,  $b:= \sup_M |c|$,
$A:= -\overline{L_0|_{D_k}}$, and\footnote{Notice that in \cite{RS2} semigroups are represented as $e^{-tA}$ whereas for us they are 
represented as $e^{tL}$ this explains the sign minus in front of the operators.} $B:= -c \in \mathscr{L}(C_0(M))$.
 Since $\overline{L_0|_{D_k}}$
generates a Feller semigroup which is a contraction semigroup by definition, we conclude from the above lemma
 that $\overline{L_0|_{D_k}} + c$ is the generator of a contraction semigroup.
Since $c \in  \mathscr{L}(C_0(M))$, we also have  $\overline{L_0|_{D_k}} + c = \overline{(L_0+c)|_{D_k}} = \overline{L_{0c}|_{D_k}}$.
According to definition \ref{semFellerdef} of Feller semigroup, the proof of (i)  ends by proving that the generated semigroup of contractions is made of positive operators.
This fact immediately arises from the Trotter product formula
$$e^{-t\overline{A+B}}f = \lim_{n\to +\infty} \left(e^{-tA/n} e^{-tB/n}\right)^n f\:,$$
i.e.,  Theorem X.51 in \cite{RS2}, with $A=-\overline{L_{0c}|_{D_k}}$ and 
$B= -c$, which is valid because $A+B$ generates a contraction semigroup as established above.
Now observe  that $e^{-tA/n}$ is positive, since it is an element of a Feller semigroup, and  $e^{-tB/n}$ is positive as well just because, 
by direct inspection, it is nothing but the multiplicative operator with a positive function
$e^{tc(x)}$. Since  the limit in the Trotter formula  here  is computed with respect to the norm $||\cdot||_\infty$, we find 
 $e^{-t\overline{A+B}}f \geq 0$ if $f\geq 0$, so that the semigroup generated by $L$ is made of positive elements and the proof
 of (i) ends.
 
 The proof of (ii) is identical 
to that of (ii) in theorem \ref{teoL2}.

To prove (iii) 
 it is sufficient to write $c(x)=\tilde c(x) +\sup_x c(x)$ with $\tilde c=c-\sup_x c(x)$ and apply items (i) and (ii) to $L_0+\tilde c$, noting that the added constant $\sup_x c(x)$ does not affect domains and closures.  The resulting semigroup $V_c(t)$ has the form $V_c(t)=e^{t\sup_x c(x)}V_{\tilde c}(t)$, where $V_{\tilde c}(t)$ is the Feller semigroup generated by $\overline{L_{0\tilde c}|_{D_k}}$. 
\end{proof}

\subsection{Chernoff functions for the Feller semigroup }\label{sez3.3}
In this section we discuss how the Feller semigroup $V(t)$ generated by $L$ can be obtained by a suitable Chernoff function $S$ again constructed out of the vector fields $A_j$. 

In the following we shall assume that the smooth Riemannian manifold  $(M,g)$ is of bounded geometry. In particular this implies that $(M,g)$ is geodesically complete (see  definition \ref{defboundedgeom} and lemma \ref{lemmaC}).

\begin{theorem}\label{teo3.1}
Let $(M,g)$ be a smooth Riemannian manifold of bounded geometry and  consider   a 
uniformly elliptic 2nd order differential operator  $L_0 :C^\infty(M) \to C^{\infty}(M)$
 of the form (\ref{Hoperator0}), where $A_0$ and  the $r\geq d$  vector fields  $A_i$ are  real, smooth  and $C^\infty$-bounded.  Let $c \in C_b^0(M)$  and let $L_{0c}:=L_0+c $ and 
$L := \overline{L_{1c}}$ -- with $L_{1c} := L_{0c}|_{D_k}$ and $D_k$ defined in (\ref{domainD}) for $k=0,1,\ldots, \infty$. \\
For any $x\in M$, $t\geq 0$ and $f\in C_0(M)$ let us define 
\begin{equation}\label{S(t)}(S(t)f)(x)=\frac{1}{4r}\sum_{j =1}^r\left(f\left(\gamma_{x,A_j}(\sqrt{2rt})\right)+
f\left(\gamma_{x,-A_j}(\sqrt{2rt})\right)\right)+\frac{1}{2}f(\gamma_{x,A_0}(2t)) + tc(x)f(x).
\end{equation}
 where  $\gamma_{x,A_j}:\bR^+\to M$ is the integral  curve of the vector field $A_j$ starting at time $t=0$ at the point $x\in M$, namely the solution of the initial value problem 
\begin{equation}\label{CPAk}
\left\{ \begin{array}{ll}
\frac{d}{dt} \gamma_{x,A_j} (t)=A_j(\gamma_{x,A_j}(t)), \\
\gamma_{x,A_j} (0)=x.
\end{array} \right.
\end{equation}
Then the following holds.
\begin{enumerate}
    \item For all $t\geq 0$ $S(t)(C_0(M))\subset C_0(M)$.
    \item  If $(V(t))_{t\geq 0}$ is the strongly continuous
     semigroup on $C_0(M)$ generated by $L$ (according to theorems \ref{teoL2} and \ref{teoL3})
    then for any $f\in C_0(M)$ and $T>0$ the following holds
\begin{equation}\label{convergenceformula}\lim_{n\to \infty}\sup _{t\in [0,T]}\|S(t/n)^nf-V(t)f\|=0\:.\end{equation} 
\end{enumerate} 
\end{theorem}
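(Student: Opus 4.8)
The plan is to verify the four hypotheses of the Chernoff theorem (Theorem \ref{FormulaChernova}) for the map $S:\bR^+\to\mathscr{L}(C_0(M))$ defined in \eqref{S(t)}, taking the core $\mathcal{D}$ to be $C_c^\infty(M)$ (or equivalently one of the $D_k$). Before invoking Chernoff, however, I must dispose of item 1 of the statement separately, namely $S(t)(C_0(M))\subset C_0(M)$, and simultaneously check that each $S(t)$ is a genuine bounded operator. This is where completeness of the vector fields enters: since $(M,g)$ is of bounded geometry it is geodesically complete (Lemma \ref{lemmaC}), and each $C^\infty$-bounded field $A_j$ satisfies $\|\,\|A_j\|_g\,\|_\infty<+\infty$, so by Lemma \ref{lemma-complete-solutions} the integral curves $\gamma_{x,A_j}$ are defined for all times; hence the compositions $f\circ\gamma_{\cdot,A_j}(\sqrt{2rt})$ make sense. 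That each such composition maps $C_0(M)$ into itself requires showing the shift maps $x\mapsto\gamma_{x,A_j}(s)$ are continuous (smooth dependence on initial data) and proper in the relevant sense, so that $f$ vanishing at infinity is preserved; the $tc(x)f(x)$ term stays in $C_0(M)$ because $c\in C_b^0(M)$.

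\medskip
\noindent The first Chernoff hypothesis, the norm bound $\|S(t)\|\le e^{\omega t}$, I would obtain by direct estimation. The shift parts are individually norm-one (composition with a map preserves sup-norm), and there are $2r$ of them weighted by $1/(4r)$ plus one weighted by $1/2$, summing to a contraction of norm at most $1$; the additive perturbation $tc(x)f(x)$ contributes $t\,\|c\|_\infty\|f\|_\infty$, giving $\|S(t)f\|\le (1+t\|c\|_\infty)\|f\|\le e^{\|c\|_\infty t}\|f\|$, so $\omega=\|c\|_\infty$ works. Hypothesis 3, $S(0)=I$, is immediate since $\gamma_{x,A_j}(0)=x$ for every field and the potential term carries a factor $t$. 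Hypothesis 2, strong continuity of $t\mapsto S(t)f$, follows from continuity of $s\mapsto\gamma_{x,A_j}(s)$ together with a dominated-convergence / uniform-continuity argument exploiting that $f\in C_0(M)$ is uniformly continuous.

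\medskip
\noindent The heart of the matter is hypothesis 4: for every $f\in\mathcal{D}=C_c^\infty(M)$ one must show $\lim_{t\to0}(S(t)f-f-tL_{0c}f)/t=0$ in sup-norm. The natural route is a second-order Taylor expansion along each integral curve. For a smooth $f$ one has $\frac{d}{ds}f(\gamma_{x,A_j}(s))=(A_jf)(\gamma_{x,A_j}(s))$ and $\frac{d^2}{ds^2}f(\gamma_{x,A_j}(s))=(A_j(A_jf))(\gamma_{x,A_j}(s))$, so that
\begin{equation}\label{taylorsketch}
f(\gamma_{x,A_j}(s))=f(x)+s(A_jf)(x)+\frac{s^2}{2}(A_jA_jf)(x)+o(s^2).
\end{equation}
Inserting $s=\sqrt{2rt}$ into the $\pm A_j$ pairs makes the odd first-order terms $(A_jf)$ cancel pairwise while the even second-order terms add, producing $\frac{1}{4r}\cdot 2\cdot\frac{(\sqrt{2rt})^2}{2}\sum_{j=1}^r(A_jA_jf)=\frac{t}{2}\sum_{j=1}^r(A_jA_jf)$; the $\gamma_{x,A_0}(2t)$ term contributes $t(A_0f)$ from its first-order coefficient (its zeroth order $f(x)$ combining with the $\frac12$ weight and the $2r\cdot\frac{1}{4r}=\frac12$ from the paired shifts to reconstruct $f(x)$); and $tc(x)f(x)$ supplies the potential. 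The sum of the leading terms is exactly $f+tL_{0c}f$, as required. The main obstacle is making the remainder uniform in $x\in M$: the $o(s^2)$ in \eqref{taylorsketch} involves third derivatives of $f$ composed with the flow, and one must bound these uniformly. Here the support of $f$ being compact is the key simplification, since the flows move compact sets into compact sets over bounded time intervals (again by completeness and the uniform velocity bound), so the third-order derivatives are controlled on a fixed compact set, the $C^\infty$-boundedness of the $A_j$ guarantees uniform coefficient bounds via \eqref{estimateg3}, and the remainder is genuinely $o(t)$ uniformly. Once all four hypotheses are verified, Theorem \ref{FormulaChernova} yields \eqref{convergenceformula} directly, noting that $\mathcal{D}=C_c^\infty(M)$ is a core for $L$ by the construction in Theorem \ref{teoL2} and Theorem \ref{teoL3}.
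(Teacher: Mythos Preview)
Your overall architecture matches the paper's: verify the four Chernoff hypotheses for $S$, with the derivative condition established via Taylor expansion along the integral curves of the $A_j$. The norm bound, $S(0)=I$, and strong-continuity arguments are all essentially as in the paper (the paper argues strong continuity via explicit compact sets $K_{\varepsilon,\tau}$ rather than invoking uniform continuity of $C_0$-functions, but the content is the same).

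There is, however, a genuine gap in your choice of core. You take $\mathcal{D}=C_c^\infty(M)$ and assert at the end that this is a core for $L$ ``by the construction in Theorem~\ref{teoL2} and Theorem~\ref{teoL3}.'' But those theorems do \emph{not} show that $C_c^\infty(M)$ is a core: what they establish (via Lemma~\ref{propL}) is that for $h\in C_c^\infty(M)$ the equation $(L_0-\lambda)f=h$ has a solution $f\in C_0(M)\cap C_b^\infty(M)$, i.e.\ $f\in D_\infty$. There is no claim that $f$ is compactly supported, so the range argument only gives density of $\operatorname{Ran}(L_0|_{D_k}-\lambda I)$, not of $\operatorname{Ran}(L_0|_{C_c^\infty(M)}-\lambda I)$. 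Chernoff's hypothesis~4 must be verified on an actual core, and your compact-support remainder argument, while correct for $f\in C_c^\infty(M)$, does not by itself suffice.

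The paper resolves this by working on the core $D_k$ with $k\geq 3$: for $\varphi\in D_k$ the Lagrange remainders are $\frac{(\sqrt{2rt})^3}{3!}(A_jA_jA_j\varphi)(u_j)$ and $\frac{(2t)^2}{2!}(A_0A_0\varphi)(u_0)$ at intermediate points, and the quantities $\|A_jA_jA_j\varphi\|_\infty$, $\|A_0A_0\varphi\|_\infty$ are finite directly from the $C^k$-boundedness built into the definition~\eqref{domainD} of $D_k$ together with the $C^\infty$-boundedness of the $A_j$. No compact-support localisation is needed. The simplest repair of your argument is to replace $C_c^\infty(M)$ by $D_k$, $k\geq 3$, throughout hypothesis~4 and bound the third-order remainder globally in this way.
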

\begin{proof}
We remark that the right hand side of \eqref{S(t)} is well defined for all $t\geq 0$ since by lemma 
\eqref{lemma-complete-solutions} the maximal solution of the Cauchy problem \eqref{CPAk} 
is defined for all $t\geq 0$, the manifold $(M,g)$ being geodesically complete by the assumption of bounded geometry. 
Let us first assume $c=0$.
\begin{enumerate}

 \item 
 The continuity of the functions   $x\longmapsto f(\gamma_{x,A_0}(2t))$ and $x\longmapsto f(\gamma_{x,A_j}(\sqrt{2rt}))$, $j=1,\dots , r$,  follows from the continuity of the maps $x\longmapsto \gamma_{x,A_j}(\tau)$ for all $j=0,\dots , r$ and $\tau \in \bR^+$.  Moreover, if $f\in C_0(M)$, then for any $x\in M$, $\tau\in \bR^+$ and $k=0,...,r$, the map   $x\longmapsto f(\gamma_{x,A_j}(\tau))$ belongs to $C_0(M)$
proving 1 in th thesis. Indeed, given $\varepsilon >0$  there exists a compact set $K_\varepsilon$ such that $|f(y)|<\varepsilon $ for $y\in K_\varepsilon^c$. Set $\sup _{x\in M}\|A_j(x)\|:=c_j<\infty$ and consider the set $K_{\varepsilon, \tau}$  defined as the closure of the set of points $y\in M$ whose distance from $K_\varepsilon$ is less then $c_j\tau$:
\begin{equation}\label{ket}K_{\varepsilon, \tau}:=\overline{\{y\in M\:|\: d(y, K_\varepsilon)\leq c_j\tau \}},\end{equation}
where $d(y, K_\varepsilon):=\inf _{x\in K_\varepsilon} d(y,x)$. Since $K_\varepsilon$ is compact, it is bounded,  namely it is
contained  in some closed geodesical ball of finite radius $R$ centered on some $x_0\in M$. Therefore, the closed 
 set $K_{\varepsilon, \tau}$ is  bounded as well since it is enclosed in a closed  ball of radius $R+c_j\tau$ centered on $x_0$
and it is therefore  compact by the Hopf-Rinow theorem because  $(M,g)$ is complete. If $x \in K_{\varepsilon, \tau}^c$ then $\gamma_x(\tau) \in K_\varepsilon ^c$, 
hence $|f(\gamma_{x,A_j}(\tau))|<\varepsilon$. Indeed if this was not true, i.e. if $\gamma_{x,A_j}(\tau)\in K_\varepsilon$, then:
$$d(x, K_\varepsilon)\leq d(x,\gamma_{x,A_j}(\tau))\leq \int_0^\tau \|\dot\gamma_{x,A_j}(s)\|ds= \int_0^\tau \|A_j(\gamma_{x,A_j}(s))\|ds<c_j\tau.$$\\

\item 
\begin{enumerate}
    \item 
First of all we prove that if $f\in C_0(M)$ then  $\sup_{x\in M}|(S(t)f)(x)|\leq \sup_{x\in M}|f(x)|.$\\
Indeed, for all $x\in M$ we use the fact that function $f$ is bounded and obtain
$$
|(S(t)f)(x)|\leq\frac{1}{4r}\sum_{k =1}^r\left(\left|f\left(\gamma_{x,A_j}(\sqrt{2rt})\right)\right|+\left|f\left(\gamma_{x,-A_j}(\sqrt{2rt})\right)\right|\right)+\frac{1}{2}|f(\gamma_{x,A_0}(2t))|
$$
$$
\leq\frac{1}{4r}\sum_{k =1}^r\left(2\sup_{z\in M}|f(z)|\right) + \frac{1}{2}\sup_{z\in M}|f(z)|=\sup_{z\in M}|f(z)|.
$$\\

\item The mapping $\bR^+\ni t\longmapsto S(t)f\in C_0(M)$ is continuous.\\
It is sufficient to show that for any $k=0,\dots r$ the map $\bR^+ \ni \tau \longmapsto S_j(\tau )f\in C_0(M)$ given by $S_j(\tau )f(x):=f(\gamma_{x,A_j}(\tau))$ is continuous in the $\sup$-norm.\\
  Let $\tau _0\in \bR^+$ and fix $\varepsilon >0$. Since $f\in C_0(M)$, there exists a compact set $K_\varepsilon $ such that $|f(y)| <\varepsilon/2$
 for $y\in K_\varepsilon ^c$. If $c_j:=\sup _{x\in M}\|A_j(x)\|$ and considering the compact set 
$K_{\varepsilon, \tau}$ defined in \eqref{ket} with $\tau =\tau _0+1$, we have that if $t\in [0, \tau_0+1]$ then
 $\gamma_{x,A_j}(t)\in K^c_\varepsilon $ for any $x\in K^c_{\varepsilon, \tau_0+1}$, hence
$$|f(\gamma_{x,A_j}(\tau))- f(\gamma_{x,A_j}(\tau_0))|<\varepsilon, \qquad \forall x\in K^c_{\varepsilon, \tau_0+1}.$$
 If $x\in K_{\varepsilon, \tau_0+1}$, then for $t\in [0, \tau _0+1]$ we have $\gamma_{x, A_j}(t)\in  K'_{\varepsilon, \tau_0+1}$, 
where $K'_{\varepsilon, \tau_0+1}$ is the compact set defined as
  $$K'_{\varepsilon, \tau_0+1}=\overline{\{y\in M \:|\:d(y, K_{\varepsilon, \tau_0+1})\leq c_j(\tau_0+1) \}}.
$$
Since $f$ is continuous on $M$, it is uniformly continuous on the compact set $K'_{\varepsilon, \tau_0+1}$ and for any $\varepsilon >0 $ there exists a $\delta >0$ such that $|f(x)-f(y)|<\varepsilon$ for $x,y\in K'_{\varepsilon, \tau_0+1}$ such that $ |x-y| <\delta$. If $x\in K_{\varepsilon, \tau_0+1}$ and $|\tau-\tau_0|<\min\{1,\delta /c_j\}$, then $\gamma_x(\tau), \gamma_x(\tau_0)\in K'_{\varepsilon, \tau_0+1}$ and $|\gamma_x(\tau)-\gamma_x(\tau_0)|<\delta$, hence:
$$|f(\gamma_{x,A_j}(\tau))- f(\gamma_{x,A_j}(\tau_0))|<\varepsilon, \qquad \forall x\in K_{\varepsilon, \tau_0+1}.$$

\item  If  $\varphi$ belongs to the core $D_k$ of $L$ with $k\geq 3$  we have $$S(t)\varphi=\varphi+tL_1\varphi+o(t) \quad \mbox{as $\bR^+ \ni t\to 0$ 
in the uniform norm}$$
 -- where $D_k$ is defined in \eqref{domainD} and $L_1 := L_0|_{D_k}$ with $L_0$ defined in (\ref{Hoperator0}).\\

 For fixed $x\in M$ and $k\in \{1,\dots ,r\}$ let us consider the map $t\mapsto \varphi (\gamma_{x,A_j}(t))$ which is smooth by the stated assumptions on $\varphi \in D_k$ and $A_j$. By Taylor expansion we have for $t\downarrow 0$:
\begin{align}
\varphi (\gamma_{x,A_j}(t))&=\varphi (\gamma_{x,A_j}(0))+t\frac{d}{dt}|_{t=0}\varphi (\gamma_{x,A_j}(t)) +\frac{t^2}{2}\frac{d^2}{dt^2}|_{t=0}\varphi (\gamma_{x,A_j}(t))+R_j(x,t)\\
&=\varphi (x)+t\left(A_j\varphi\right)(x)+\frac{t^2}{2}\left(A_jA_j\varphi\right) (x)+R_j(x,t), \label{Taylor1}\\
\end{align}
where $$R_j(x,t)=\frac{t^3}{3!}\left(A_jA_jA_j\varphi\right)(u),$$ with $u=\gamma _{x,A_j}(\xi)$, $\xi\in [0, t]$. Analogously for $j=0$ we have:
\begin{align*}
\varphi (\gamma_{x,A_0}(t))&=\varphi (\gamma_{x,A_0}(0))+t\frac{d}{dt}|_{t=0}\varphi (\gamma_{x,A_0}(t)) +R_0(x,t)\\
&=\varphi (x)+t\left(A_0\varphi\right)(x)+R_0(x,t), \label{Taylor2}
\end{align*}
with $R_0(x,t)=\frac{t^2}{2!}\left(A_0A_0\varphi\right)(u),$ with $u=\gamma _{x,A_0}(\xi)$, $\xi\in [0, t]$. Hence
\begin{align*}
S(t)\varphi (x)&= \frac{1}{4r}\sum_{j =1}^r\left(\varphi\left(\gamma_{x,A_j}(\sqrt{2rt})\right)+
\varphi\left(\gamma_{x,-A_j}(\sqrt{2rt})\right)\right)+\frac{1}{2}\varphi(\gamma_{x,A_0}(2t))\\
&=\varphi(x)+\frac{1}{4r}\sum_{j =1}^r 2rt \left(A_jA_j\varphi\right) (x) +t\left(A_0\varphi\right)(x)+ t^{3/2}\tilde R(t,x)\\
&= \varphi(x) +tL_1\varphi(x)+t^{3/2}\tilde R(t,x)
\end{align*}
where 
$$\tilde R(t,x)=\sqrt t (A_0A_0 \varphi)(u_0)+\frac{\sqrt{2r}}{12}\sum_{j=1}^r\left(\left(A_jA_jA_j\varphi\right)(u_j)+\left(A_jA_jA_j\varphi\right)(u'_j)\right),$$
for suitable $u_0, u_j, u'_j\in M$, $j=1,\ldots, r$. The proof concludes by proving that  $\sup_{t\in [0,1], x\in M}|\tilde R(x,t)|<\infty$. This fact arises from the bounds
$$\|(A_0A_0 \varphi)\|_\infty\:,\quad  \|(A_jA_jA_j\varphi)\|_\infty\:,  j=1, \ldots, r, 
$$
 due to the very definition (\ref{domainD}) of $D_k$ as well as on the assumption that the vector fields $\{A_j\}_{j=0, \ldots r}$ are $C^\infty$-bounded and $\varphi \in D_k$ with $k\geq 3$.
 \end{enumerate}
 This concludes the proof of (2)  since the conditions (1)-(4) in theorem \ref{FormulaChernova}
assuring the validity of (2) are valid
 in view of the results above ((3) is trivially true).
\end{enumerate} 
 The case $c\neq 0$ has now an easy  proof. Let  $S_0$ denote the Chernoff function of $L$ with $c=0$ 
and let $S$
denote  the analog for the case  $c\neq 0$.  If $f\in C_0(M)$ then  $S(t)f = S_0(t)f + tcf \in C_0(M)$ because $S_0(t)f \in C_0(M)$, $f\in C_0(M)$ and 
$c$ is continuous and
 bounded. Hence (1) is true. Regarding (2), the estimate 
 $\|S(t)f\|=\|S_0(t)f+tcf\|\leq \|S_0(t)\|||f||+t\|c\|\|f\|=(1+t\sup_{x\in M}|c(x)|)||f||\leq e^{t\|c\|}||f||$
proves that condition (1)  in theorem \ref{FormulaChernova} is valid. Requirement (2) is valid because $S= S(t)$ is the sum of 
two continuous $\mathscr{L}(C_0(M))$-valued functions of $t$. (3) is trivially true. Condition (4) is satisfied because if $\varphi \in D_k$
with $k\geq 3$, exploiting condition (c) in (2) above, and where $L_1$ is referred to the case $c=0$,
 $$S(t)\varphi=S_0(t)\varphi+tc\varphi=\varphi+tL_1\varphi+o(t)+tc\varphi=\varphi+t(L_{1} + c)\varphi+o(t)
= \varphi+tL_{1c}\varphi+o(t)\:. $$
Hence theorem \ref{FormulaChernova} implies that (2) is valid.
\end{proof}
\begin{theorem} Under assumptions of theorem \ref{teo3.1}, the following facts hold.

\begin{itemize}
\item[(1)] For the operator  $L$  defined in theorem \ref{teoL3} and $S(t)$  defined in (\ref{S(t)}), we have that the classical solution\footnote{In sense of Proposition \ref{ACPsol}.} $u$ of the Cauchy problem 
 \begin{equation*}
 \left\{ \begin{array}{l}
 \frac{\partial }{\partial t}u(t,x)=Lu(t,x)\\
 u(0,x)=u_0(x)
 \end{array}
 \right.
 \end{equation*}
  is given for $u_0\in D(L)$ by \begin{equation}\label{C-M-1}u(t,x)=\lim_{n\to\infty}(S(t/n)^{n }u_0)(x).\end{equation}
\item[(2)] In the case  $A_0=0$ and $c=0$, then an alternative equivalent form for the operator $S(t):C_0(M)\to C_0(M)$, $t\geq 0$,  is:
\begin{equation}\label{S(t)-vers-2}
    (S(t)f)(x)=\frac{1}{2r}\sum_{j =1}^r\left(f\left(\gamma_{x,A_j}(\sqrt{2rt})\right)+f\left(\gamma_{x,-A_j}(\sqrt{2rt})\right)\right), \qquad f\in C_0(M)
\end{equation}

\end{itemize}
  \end{theorem}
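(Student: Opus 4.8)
The plan for item (1) is to combine two facts already at our disposal. By Theorems \ref{teoL2} and \ref{teoL3} the operator $L$ generates a strongly continuous semigroup $(V(t))_{t\geq 0}$ on $C_0(M)$, so Proposition \ref{ACPsol} tells us that for every $u_0\in D(L)$ the Cauchy problem has the unique classical solution $u(t,\cdot)=V(t)u_0$, i.e. $u(t,x)=(V(t)u_0)(x)$. On the other hand, Theorem \ref{teo3.1}(2) gives the Chernoff estimate $\sup_{t\in[0,T]}\|S(t/n)^n u_0-V(t)u_0\|_\infty\to 0$. Since sup-norm convergence forces pointwise convergence, for each fixed $x\in M$ and $t\geq 0$ we obtain $\lim_{n\to\infty}(S(t/n)^n u_0)(x)=(V(t)u_0)(x)=u(t,x)$, which is exactly \eqref{C-M-1}. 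I expect no obstacle here: item (1) is a direct corollary of Proposition \ref{ACPsol} and Theorem \ref{teo3.1}.

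For item (2) the natural route is to show that the operator $\widetilde S(t)$ given by the right-hand side of \eqref{S(t)-vers-2} is again a Chernoff function for $L$ in the case $A_0=0$, $c=0$; then Theorem \ref{FormulaChernova} yields $\widetilde S(t/n)^n\to e^{tL}=V(t)$ strongly and uniformly on $[0,T]$, so \eqref{S(t)-vers-2} produces the same limit as \eqref{S(t)} and is equivalent to it in this sense. Thus I would verify the four hypotheses of Theorem \ref{FormulaChernova}. The integral curves $\gamma_{x,\pm A_j}(\sqrt{2rt})$ are globally defined by Lemma \ref{lemma-complete-solutions}, and hypotheses 1--3 go through verbatim as in the proof of Theorem \ref{teo3.1}: $\widetilde S(t)$ is a convex combination (weights $\tfrac{1}{2r}$, summing to $1$ over the $2r$ terms) of the operators sending $f$ to $x\mapsto f(\gamma_{x,\pm A_j}(\sqrt{2rt}))$, each of which maps $C_0(M)$ into itself (the compact-set argument for vanishing at infinity) and is a sup-norm contraction; hence $\|\widetilde S(t)\|\leq 1$ with $\omega=0$, $\widetilde S(0)=I$, and $t\mapsto\widetilde S(t)f$ is sup-norm continuous.

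The crux, and the step I expect to be the main obstacle, is hypothesis 4. For $\varphi$ in a core $D_k$ with $k\geq 3$ I would Taylor-expand $s\mapsto\varphi(\gamma_{x,\pm A_j}(s))$ to second order, as in \eqref{Taylor1}. In the symmetric pair the first-order terms $\pm s(A_j\varphi)(x)$ cancel, which is precisely what kills any drift and is consistent with $A_0=0$, while the second-order terms combine to $s^2(A_jA_j\varphi)(x)$. Putting $s=\sqrt{2rt}$, summing over $j$ and inserting the weight, one gets an expansion $\widetilde S(t)\varphi=\varphi+t\,\Lambda\varphi+t^{3/2}\widetilde R(t,\cdot)$, and the decisive check is that the surviving quadratic coefficient $\Lambda$ equals $L_0=\tfrac12\sum_{j=1}^r A_jA_j$. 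This identity is exactly the compatibility constraint between the step length $\sqrt{2rt}$ and the weight $\tfrac{1}{2r}$, and it is the one place where the constants must be matched carefully. Once this is confirmed, the uniform remainder bound $\sup_{t\in[0,1],\,x\in M}|\widetilde R(t,x)|<\infty$ follows as in Theorem \ref{teo3.1} from the $C^\infty$-boundedness of the $A_j$ together with $\varphi\in D_k$ (which control $\|A_jA_jA_j\varphi\|_\infty$), and Theorem \ref{FormulaChernova} completes the proof.
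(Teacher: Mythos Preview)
Your plan is correct and matches the paper's own proof exactly: for item (1) the paper simply invokes \eqref{convergenceformula} (which, combined with Proposition \ref{ACPsol} as you do, gives the result), and for item (2) the paper says only that the proof is ``strictly analogous to that of the corresponding statement in theorem \ref{teo3.1}'', i.e.\ one re-verifies the hypotheses of Theorem \ref{FormulaChernova} for the modified family---precisely the route you outline.

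One caveat concerning the constant check you rightly single out as the crux of item (2). If you actually carry out the Taylor expansion, the symmetric pair yields
\[
f(\gamma_{x,A_j}(s))+f(\gamma_{x,-A_j}(s))=2f(x)+s^{2}(A_jA_jf)(x)+O(s^{3}),
\]
and with $s=\sqrt{2rt}$ and weight $\tfrac{1}{2r}$ one obtains
\[
\widetilde S(t)\varphi=\varphi+t\sum_{j=1}^{r}A_jA_j\varphi+O(t^{3/2})=\varphi+2tL_0\varphi+O(t^{3/2}),
\]
so the surviving coefficient is $2L_0$ rather than $L_0$. (Equivalently: when $A_0=0$ the original $S(t)$ equals $\tfrac12\widetilde S(t)+\tfrac12 I$, whence $\widetilde S(t)=2S(t)-I=\mathrm{Id}+2tL_0+o(t)$.) This appears to be a misprint in \eqref{S(t)-vers-2}; replacing the argument $\sqrt{2rt}$ by $\sqrt{rt}$ restores $\Lambda=L_0$, and then your argument goes through verbatim and is exactly the paper's intended ``analogous'' proof.
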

  \begin{proof}
  Result (1) immediately arises from  \eqref{convergenceformula}, which is valid  for all $f\in C_0(M)$, for  all $x\in M$, and all $t\geq 0$.
  (2) It can be proved with a proof strictly analogous to that of the corresponding statement in the theorem  \ref{teo3.1}.
  \end{proof}

\section{A probabilistic interpretation of Chernoff construction}\label{SecProbabilisticInterpretation}

The convergence result stated by Chernoff construction can be equivalently formulated (see \cite{EN1} Th 5.2 Ch. III) in the following way for all  $t\geq 0$ and $f\in C_0(M)$:
$$V(t)f=\lim_{n\to\infty}(S(1/n)^{\lfloor nt\rfloor }f)\:.$$
Assuming that the function $c=0$, the latter formula admits a probabilistic interpretation in terms of the limit of expectations with respect to a sequence of random walks on the manifold $M$. Actually, in the following sections we shall set $c=0$ and  provide three different constructions.
\subsection{A jump process on $M$}
Let  $\{X_n(t)\}_{n\geq 1} $ be a sequence of jump processes on $M$ defined as
\begin{equation}\label{Xnjump}
\begin{cases}
X_n(0)\equiv x ,\\ X_n(t):=X_n(\lfloor  nt\rfloor/n)=Y_n(\lfloor  nt\rfloor) \quad t>0,
\end{cases}\end{equation}
the jump chain $\{Y_n(m)\}_{m\geq 1}$ is a  Markov chain with transition  probabilities  given (for each Borel set $B\subset M$) by 
\begin{multline} \bP(Y_n(m)\in B|Y_n(m-1)=y)= \\ =\frac{1}{4r}\sum_{j=1}^r\left(\delta_{\gamma_{y, A_j}\left(\sqrt{2r/n}\right)}\left(B\right) + \delta_{\gamma_{y, -A_j}\left(\sqrt{2r/n}\right)}\left(B\right)\right)+\frac{1}{2}\delta_{\gamma_{y, A_0}(2/n)}\left(B\right), \quad B\in \cB(M).\end{multline}
Actually $(X_n(t))_{t\geq 0}$
is a random walk on $M$ with  steps given by integral curves of the vector fields $A_k$, $k =0,\dots r$. 
Now equation \eqref{C-M-1} can be written in the following form:
\begin{equation}u(t,x)=\lim_{n\to\infty}(S(1/n)^{\lfloor nt\rfloor }u_0)(x)
=
\lim_{n\to\infty}\bE[u_0(X_n(t))]\label{limit1}\end{equation}
Actually, the sequence of jump processes $\{X_n\}$ converges weakly to  the  diffusion process   $(X(t))_{t\in \bR^+}$ on  $M$ associated to the Feller semigroup $V(t)$, as we are going to show.

Let $D_{M}[0,+\infty)$  denote the space of c\'adl\'ag $M-$valued functions over the interval $[0,+\infty)$, i.e the functions which are right-continuous and admit left hand limits. It is possible to define  a distance function (i.e. metric) on $D_{M}[0,+\infty)$ under which it becomes a separable metric space. The topology induced by the metric is called {\em Skorohod topology} \cite{Bil,EthKur}.
In the following, with the symbol $\cS_M$ we shall denote the corresponding Borel $\sigma-$algebra on $D_{M}[0,+\infty)$. In fact $\cS_M$ coincides with the $\sigma-$ algebra generated by the projection maps $\pi_t:D_{M}[0,+\infty)\to M$
\begin{equation}\label{sigmaDM}
    \cS_ M=\sigma (\pi_t, t\geq 0)
\end{equation}
where
\begin{equation}\label{pit}\pi_t(\gamma):= \gamma (t), \qquad \gamma \in D_{M}[0,+\infty).\end{equation}
 
As a consequence, a stochastic process $X=(\Omega, \cF , \cF_t, (X(t)), \bP)$ with  trajectories in  $D_{M}[0,+\infty)$ can be looked at as a $D_{M}[0,+\infty)-$valued random variable, i.e. as a map $X:\Omega \to D_{M}[0,+\infty)$ defined as:
$$X(\omega):=\gamma _\omega, \qquad \gamma_\omega (t):=X(t)(\omega), \qquad t\in [0, +\infty), \, \omega \in \Omega. $$
The measurability of the map $X$ from $(\Omega, \cF)$ to $(D_{M}[0,+\infty), \cS_M)$ follows from \eqref{sigmaDM}.
We shall denote with $\mu_X$ the probability measure on $\cS_M$ obtained as the pushforward of $\bP$ under $X$, defined for any Borel set $I\in \cS_M$ as $\mu_X(I)=\bP(X(\omega )\in I)$. 

Considered the sequence of jump processes $(X_n)$ defined on a probability space $(\Omega, \cF, \bP)$ by \eqref{Xnjump}, let $\mu_{X_n}$ be the corresponding distribution on $(D_{M}[0,+\infty), \cS_M)$. Further, let $\mu_X$ be the analogous distribution corresponding to the Feller process $X$.
\begin{theorem}\label{thconvD}Under the assumptions of theorem \ref{teo3.1},
the sequence of processes $X_n$ converges weakly in $D_M[0,+\infty)$ and  its weak limit is the Feller process $X$.
\end{theorem}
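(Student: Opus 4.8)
The plan is to prove weak convergence in the standard two-step manner for càdlàg processes (see \cite{EthKur,Bil}): first establish relative compactness of the laws $\{\mu_{X_n}\}$ on $(D_M[0,+\infty),\cS_M)$, and then identify the only possible weak limit by showing that the finite-dimensional distributions of $X_n$ converge to those of $X$. Since $D_M[0,+\infty)$ is a Polish space under the Skorohod metric (because $M$ is), Prohorov's theorem guarantees that relative compactness is equivalent to tightness; once tightness is in force, convergence of finite-dimensional distributions along a dense set of times forces $\mu_{X_n}\to\mu_X$ weakly, the limit being unique because the diffusion $X$ has (a.s.) continuous paths.

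The finite-dimensional convergence is the part that follows almost directly from what is already proved. The one-time marginals are handled by \eqref{limit1}: for every $f\in C_0(M)$ and $t\ge 0$ one has $\bE[f(X_n(t))]=(S(1/n)^{\lfloor nt\rfloor}f)(x)\to (V(t)f)(x)=\bE[f(X(t))]$ by the reformulated Chernoff limit of theorem \ref{teo3.1}, since the one-step transition operator of the chain $\{Y_n(m)\}$ is exactly $S(1/n)$. To pass to several times $0\le t_1<\dots<t_m$ I would exploit that $\{Y_n(m)\}$ is a Markov chain, so that $\bE[f_1(X_n(t_1))\cdots f_m(X_n(t_m))]$ is a nested application of operators $S(1/n)^{\lfloor n t_{i+1}\rfloor-\lfloor n t_i\rfloor}$ to products of the $f_i$. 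An induction on $m$, using at each step the strong convergence $S(1/n)^{\lfloor ns\rfloor}\to V(s)$ together with $\|S(1/n)\|\le 1$ and the uniform boundedness of the intermediate functions, reduces this to the corresponding nested expression for the Feller process $X$, whose Markov transition operators are precisely $V(s)$. This yields convergence of all finite-dimensional distributions.

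The genuinely delicate step is relative compactness. I would invoke the standard tightness criterion for $D_M[0,+\infty)$ (\cite{EthKur}, Ch.~3), by which tightness of $\{\mu_{X_n}\}$ follows from two ingredients: (i) a \emph{compact containment condition}, namely that for every $\eta>0$ and $T>0$ there is a compact $K\subset M$ with $\inf_n\bP\big(X_n(t)\in K\ \text{for all}\ t\in[0,T]\big)\ge 1-\eta$; and (ii) an Aldous-type control of the increments, for which one estimates $\bE[\,d_{(M,g)}(X_n(t+\theta),X_n(t))^2\,]$ for small $\theta$. Ingredient (ii) is where the \emph{diffusive scaling} of $S$ enters: over a time increment $\theta$ the chain performs about $n\theta$ steps, each moving the point a geodesic distance at most $\max_j c_j\sqrt{2r/n}$ (respectively $2c_0/n$ for the $A_0$-step), where $c_j:=\sup_{x\in M}\|A_j(x)\|_g<\infty$ by $C^\infty$-boundedness; since the steps in opposite directions $\pm A_j$ are equally likely, the mean-square displacement over $[t,t+\theta]$ is of order $\theta$, so the oscillation modulus vanishes as $\theta\to0$ uniformly in $n$.

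The main obstacle is therefore the compact containment condition (i): because $M$ is non-compact, the worst-case deterministic displacement over $\lfloor nT\rfloor$ steps is $O(\sqrt n)$ and cannot by itself confine the walk. To overcome this I would argue probabilistically. At each fixed time the one-time convergence $\bE[f(X_n(t))]\to\bE[f(X(t))]$ for all $f\in C_0(M)$, together with the fact that $V$ is a \emph{conservative} Feller semigroup (so that the limiting diffusion does not explode, which holds here thanks to bounded geometry and $C^\infty$-boundedness of the $A_j$), shows that no mass escapes to infinity in the limit and hence gives compact containment at each fixed $t$ uniformly in $n$. This pointwise-in-time containment is then upgraded to the uniform statement over $[0,T]$ by the Markov property combined with a maximal inequality, using once more the smallness of the individual jumps. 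Once (i) and (ii) hold, tightness follows, and together with the finite-dimensional convergence established above we conclude by the standard argument that $X_n$ converges weakly in $D_M[0,+\infty)$ to the Feller process $X$.
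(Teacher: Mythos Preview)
Your strategy—tightness plus convergence of finite-dimensional distributions—is the textbook route and would work, but it is not what the paper does. The paper's proof is a single-line citation: once \eqref{limit1} is in hand (i.e., $S(1/n)^{\lfloor nt\rfloor}f\to V(t)f$ in $C_0(M)$ for every $f$), weak convergence in $D_M[0,+\infty)$ follows directly from Theorem~2.6, Ch.~4 of \cite{EthKur} (equivalently Theorem~19.25 in \cite{Kal}). Those theorems are designed precisely to pass from convergence of discrete transition operators to a Feller semigroup to weak convergence of the associated processes, and they package the tightness argument internally. Your approach is essentially a re-derivation of that black-box result in this particular setting; the advantage is self-containedness, the cost is length.

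That said, your tightness sketch has genuine soft spots. The Aldous step is plausible but not rigorous as written: the increments are not independent (the vector field is evaluated at the current position), so the claim that the mean-square displacement over $[t,t+\theta]$ is $O(\theta)$ requires an actual martingale decomposition and moment bound, not just the observation that $\pm A_j$ are equally likely. More seriously, the compact containment argument leans on two facts you do not justify: (a) conservativeness of the limiting Feller semigroup, which the paper never proves (it is true here—bounded geometry and $C^\infty$-boundedness of the $A_j$ give globally bounded coefficients, hence non-explosion—but this is a nontrivial input you would need to supply or cite); and (b) the upgrade from pointwise-in-$t$ containment to uniform containment on $[0,T]$ via an unspecified ``maximal inequality.'' Without naming the inequality and checking its hypotheses for this chain, that step is a gap. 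These are fixable, but once you are willing to cite the non-explosion result you might as well cite the Ethier--Kurtz theorem and be done in one line, as the paper does.
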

\begin{proof} The proof is a direct application of  \eqref{limit1} and of theorem 2.6 Ch 4 of \cite{EthKur}, see also theorem 19.25 in \cite{Kal}.\end{proof}
\subsection{A piecewise geodesic random walk }
For any $T>0$, let us consider the space  $C_{M}[0,T]$ of continuous functions $\gamma :[0, T] \to M$ endowed with the topology of uniform convergence. The corresponding Borel $\sigma $-algebra $\cB_M$ is generated by the coordinate projections $\pi_t$, $t\in [0,T] $ defined as above (see Eq. \eqref{pit}) \cite{Bal}.

The stochastic  process $X$ associated to the Feller semigroup $V(t)$ is a diffusion process, hence it has continuous trajectories. 

Let us consider the sequence of processes $(Z_n)_n$ with sample paths in $C_{M}[0,T)$, obtained by continuous interpolation of the paths of $(X_n)_n$ by means of geodesic arcs.
More precisely, the process $(Z_n(t))_{t\geq 0}$ is defined as
\begin{equation}
    \begin{cases}
    Z_n (0)\equiv x, \\
    Z_n(m/n)\equiv X_n(m/n), \quad m\in \bN,\\
     \quad Z_n(t)=\gamma_{X_n(m/n),X_n((m+1)/n)}(t-m/n), \: t\in [m/n, (m+1)/n]
    \end{cases}
\end{equation}, 
where $\gamma_{x,y}(t)$ denotes an arbitrary shortest geodesics in $M$ such that $\gamma_{x,y}(0)=x$ and $\gamma_{x,y}(1/n)=y$.

Let us denote with $\mu_n$, resp. $\mu$, the Borel measure over the space $C_{M}[0,T]$ induced by the process $Z_n$, resp. $X$.  The following holds.
\begin{theorem}\label{teoconvC}
Under the assumptions above, $Z_n$ converges to $X$ on $C_{M}[0,T]$. 
\end{theorem}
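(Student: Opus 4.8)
The plan is to avoid constructing $Z_n$ from scratch and instead to deduce its convergence directly from the weak convergence $X_n\Rightarrow X$ in $D_M[0,+\infty)$ already established in Theorem \ref{thconvD}, exploiting the fact that the limiting diffusion $X$ has continuous paths. The mechanism is a converging-together (Slutsky-type) argument: I would show that $Z_n$ and the jump process $X_n$ are \emph{pathwise} uniformly close, with an error that vanishes deterministically as $n\to\infty$, so that $Z_n$ inherits the weak limit of $X_n$, which — being supported on continuous paths — upgrades convergence from the Skorohod space to $C_M[0,T]$.

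First I would bound the size of a single step of the jump chain. On each interval $[m/n,(m+1)/n)$ the process $X_n$ is constant, equal to $X_n(m/n)$, while $X_n((m+1)/n)$ is obtained by following an integral curve of one of the fields $\pm A_j$ for time $\sqrt{2r/n}$ (for $j\ge 1$) or of $A_0$ for time $2/n$. Writing $c_j:=\sup_{x\in M}\|A_j(x)\|_g<+\infty$ (finite by $C^\infty$-boundedness), the same estimate used in the proof of Theorem \ref{teo3.1}, namely $d_{(M,g)}(y,\gamma_{y,A_j}(\tau))\le\int_0^\tau\|A_j(\gamma_{y,A_j}(s))\|_g\,ds\le c_j\tau$, shows that every single step has geodesic displacement at most $\eta_n:=\max\{\max_{1\le j\le r}c_j\sqrt{2r/n},\,2c_0/n\}=O(n^{-1/2})$, uniformly over the trajectory and over $\omega$. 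For $n$ large, $\eta_n$ is below the injectivity radius $I_{(M,g)}>0$ (positive by bounded geometry), so the minimizing geodesic joining two consecutive grid values is unique, and $Z_n$ is well defined and measurable. Since $X_n(t)=X_n(m/n)$ on $[m/n,(m+1)/n)$ and $Z_n$ moves along that geodesic of length $\le\eta_n$, one gets the deterministic pathwise bound $\sup_{t\in[0,T]}d_{(M,g)}(Z_n(t),X_n(t))\le\eta_n\to 0$.

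Finally I would run the converging-together argument. Viewing both $X_n$ and $Z_n$ as random elements of $D_M[0,T]$ with the Skorohod metric $d_{Sk}$, which is dominated by the uniform metric on $M$-valued paths, the previous bound gives $d_{Sk}(Z_n,X_n)\le\eta_n\to 0$. Restricting Theorem \ref{thconvD} to $[0,T]$ — legitimate because the restriction map $D_M[0,+\infty)\to D_M[0,T]$ is continuous at paths continuous at $T$, and $X$ is a.s.\ continuous — yields $X_n\Rightarrow X$ in $D_M[0,T]$. The converging-together lemma (e.g.\ \cite{Bil}) then gives $Z_n\Rightarrow X$ in $D_M[0,T]$. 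Since $X$ takes values in the closed subset $C_M[0,T]\subset D_M[0,T]$, on which the Skorohod and uniform topologies coincide, weak convergence in $D_M[0,T]$ to a limit supported on $C_M[0,T]$ entails weak convergence of $\mu_n$ to $\mu$ in $C_M[0,T]$ \cite{EthKur}, which is the claim.

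I expect the main obstacle to be precisely the passage between topologies in the last step: one must argue carefully that Skorohod-closeness of the càdlàg $X_n$ to the continuous $Z_n$, combined with the continuity of the limiting diffusion $X$, genuinely upgrades weak convergence from $D_M[0,T]$ to $C_M[0,T]$. This is where the continuity of $X$ (hence the conservativeness of the semigroup, so that $X$ never reaches $\partial$ and lives in $C_M[0,T]$) and the identification of the Skorohod and uniform topologies on the continuous-path space are essential; the probabilistic tightness that would otherwise be required is here absorbed into the deterministic step-size bound $\eta_n\to 0$.
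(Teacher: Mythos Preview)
Your argument is correct and takes a genuinely different route from the paper's proof. The paper proceeds via a tightness argument in $C_M[0,T]$: it bounds the modulus of continuity of $Z_n$ by observing that, since the geodesic interpolation never leaves the ball of radius $d(X_n(m/n),X_n((m+1)/n))$ around a grid point, one has $w(Z_n,\delta)\le 3\max_m\{d(X_n(m/n),X_n((m+1)/n))\}$ on a $\delta$-scale, and then invokes a lemma (Lemma \ref{lemma4}) stating that weak convergence of $\mu_{X_n}$ in $D_M[0,T]$ to a limit concentrated on continuous paths forces the right-hand side to vanish as $\delta\downarrow 0$; tightness plus identification of finite-dimensional limits then yields the result. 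You instead bypass tightness entirely: the deterministic step-size bound $\sup_t d_{(M,g)}(Z_n(t),X_n(t))\le\eta_n=O(n^{-1/2})$ feeds directly into a converging-together argument with $X_n\Rightarrow X$ in $D_M[0,T]$, after which continuity of $X$ and of the $Z_n$ lets you pass from $D_M[0,T]$ to $C_M[0,T]$.

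Your approach is shorter and more elementary, and it makes transparent why geodesic interpolation is irrelevant to the limit (any interpolation staying within distance $\eta_n$ of the grid value would do). The paper's route, on the other hand, isolates the modulus-of-continuity control explicitly, which is precisely the ingredient that is reused verbatim in the proof of Theorem \ref{teo-conv-lastZn} for the non-geodesic interpolation $\tilde Z_n$; there the deterministic closeness $\sup_t d(\tilde Z_n(t),X_n(t))\le\eta_n$ is \emph{not} immediate, since the integral curves $\gamma_{x,\pm A_j}$ need not stay within $d(x,\gamma_{x,\pm A_j}(\tau))$ of $x$ for intermediate times, and one has to invoke Proposition \ref{teo4.5} to get the required monotonicity. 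So both arguments are valid here, but the paper's formulation is chosen to parallel the later, harder case.
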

In other words,  theorem \ref{teoconvC} states that  the sequence of measures $\{\mu_n\}$ converges weakly to $\mu$.
Before proving theorem \ref{teoconvC} we recall some preliminary results.

\begin{definition}
Let $(M,d)$ be a separable metric space. The {\em modulus of continuity } of a function $\gamma:[0,T]\to M$ is defined for any  $\delta>0$ as: 
$$w(\gamma, \delta):=\sup\{d(\gamma (t),\gamma(s)), s,t\in [0,T], |t-s|<\delta\}.$$
\end{definition}

\begin{lemma}\label{lemma4}
  let $\nu_n$ be a sequence of probability measures on  $D_M[0,T]$ converging weakly to a finite measure $\nu$ which is concentrated on $C_M[0,T]$. Then for any $\varepsilon >0$ 
  \begin{equation}
      \lim_{\delta \downarrow 0}\limsup_{n\in \bN}\nu_n(\{\gamma\in D_M[0,T]\colon w(\gamma , \delta )>\varepsilon\})=0
  \end{equation}
\end{lemma}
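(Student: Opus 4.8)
The plan is to reduce the statement to a one–dimensional weak–convergence fact for the real functional $\gamma \mapsto w(\gamma,\delta)$, exploiting that the limit measure $\nu$ charges only continuous paths. Throughout I write $\Phi_\delta : D_M[0,T] \to [0,+\infty)$, $\Phi_\delta(\gamma):=w(\gamma,\delta)$, and I first record two preliminary properties of this functional. \emph{Measurability and finiteness}: a c\`adl\`ag path has bounded range on $[0,T]$, so $w(\gamma,\delta)<\infty$; moreover, by right–continuity the supremum defining $w(\gamma,\delta)$ may be taken over a countable dense set of pairs $(s,t)$, and since each $\gamma\mapsto d(\gamma(s),\gamma(t))=d(\pi_s(\gamma),\pi_t(\gamma))$ is $\cS_M$-measurable (recall \eqref{sigmaDM}), a countable supremum of such maps shows $\Phi_\delta$ is $\cS_M$-measurable. \emph{Continuity at continuous paths}: here I would invoke the classical fact (see \cite{Bil,EthKur}) that Skorohod convergence $\gamma_n\to\gamma_0$ towards a \emph{continuous} limit $\gamma_0$ is equivalent to uniform convergence $\sup_{t\in[0,T]}d(\gamma_n(t),\gamma_0(t))\to 0$, combined with the elementary triangle–inequality estimate $|w(\gamma_n,\delta)-w(\gamma_0,\delta)|\le 2\sup_{t\in[0,T]}d(\gamma_n(t),\gamma_0(t))$. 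These two together show that the set of discontinuity points of $\Phi_\delta$ is contained in the complement of $C_M[0,T]$.

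Since $\nu_n\Rightarrow\nu$ in $D_M[0,T]$ and $\nu$ is concentrated on $C_M[0,T]$, the discontinuity set of $\Phi_\delta$ is $\nu$-null. I would then apply the mapping theorem for weak convergence to obtain that the laws of $\Phi_\delta$ under $\nu_n$ converge weakly, on $[0,+\infty)$, to the law of $\Phi_\delta$ under $\nu$. (Note that testing $\nu_n\Rightarrow\nu$ against the constant function $1$ gives $\nu(D_M[0,T])=1$, so $\nu$ is in fact a probability measure and total masses converge, which makes the ensuing Portmanteau estimate legitimate.) Applying the Portmanteau theorem to the closed set $[\varepsilon,+\infty)$ then yields, for each fixed $\delta>0$,
$$\limsup_{n}\nu_n(\{w(\cdot,\delta)>\varepsilon\})\;\le\;\limsup_{n}\nu_n(\{w(\cdot,\delta)\ge\varepsilon\})\;\le\;\nu(\{w(\cdot,\delta)\ge\varepsilon\}).$$

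Finally I would let $\delta\downarrow 0$ on the right–hand side. Setting $A_\delta:=\{\gamma : w(\gamma,\delta)\ge\varepsilon\}$, the sets $A_\delta$ decrease as $\delta\downarrow 0$ because $w(\gamma,\cdot)$ is non-decreasing in $\delta$. A path lies in $\bigcap_{\delta>0}A_\delta$ precisely when $\lim_{\delta\downarrow0}w(\gamma,\delta)\ge\varepsilon$, which is impossible for a function that is uniformly continuous on the compact interval $[0,T]$; in particular every continuous path is excluded, so $\bigcap_{\delta>0}A_\delta\subseteq (C_M[0,T])^{c}$, a $\nu$-null set. As $\nu$ is finite, continuity from above gives $\lim_{\delta\downarrow0}\nu(A_\delta)=\nu\big(\bigcap_{\delta>0}A_\delta\big)=0$, and combined with the displayed inequality this proves
$$\lim_{\delta\downarrow0}\limsup_{n}\nu_n(\{w(\cdot,\delta)>\varepsilon\})=0.$$

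The only genuinely nontrivial ingredient is the continuity of $w(\cdot,\delta)$ at continuous paths, that is, the identification of Skorohod convergence with uniform convergence when the limit is continuous; I expect this to be the main obstacle, while the remaining steps are measure-theoretic bookkeeping (the mapping theorem, Portmanteau, and continuity from above of a finite measure).
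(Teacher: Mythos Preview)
Your argument is correct. The paper does not actually prove this lemma: immediately after the statement it simply writes ``For a proof see \cite{SWW2007}'' and moves on. So there is no approach in the paper to compare against; you have supplied a self-contained proof where the authors only gave a citation.

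Your route---showing that $\Phi_\delta=w(\cdot,\delta)$ is $\nu$-a.e.\ continuous (via the standard fact that Skorohod convergence to a continuous limit upgrades to uniform convergence), applying the continuous mapping theorem, Portmanteau for the closed half-line $[\varepsilon,\infty)$, and then continuity from above of the finite measure $\nu$ on the decreasing family $A_\delta$---is exactly the argument one finds in the source the paper cites (Smolyanov--Weizs\"acker--Wittich, \emph{Potential Anal.} 2007, Lemma~3.3), so in substance your proof coincides with the referenced one. The measurability justification via a countable dense set of time-pairs and the observation that $\nu$ inherits total mass $1$ from the $\nu_n$ are both clean and correct touches that make your write-up genuinely self-contained.
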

For a proof see \cite{SWW2007}.
\begin{proof}[Proof of theorem \ref{teoconvC}]
 Let us consider the trajectories $\gamma_\omega$ of the process $Z_n$, defined as $\gamma_\omega (t):=Z_n(t)(\omega)$. Fix $\delta >0$ and take $n$ sufficiently large in such a way that $1/n<\delta$. Consider $s,t\in [0,T]$, $s<t$, $|t-s|<\delta$. We will have   $s\in [m/n,(m+1)/n]$ and $t\in [m'/n,(m'+1)/n]$, with $m\leq m'$.   We have:
\begin{align*}
    & d(\gamma_\omega (s), \gamma _\omega (t))\\
    &\leq
    d\left(\gamma_\omega(s),\gamma_\omega((m+1)/n)\right)+d\left(\gamma_\omega((m+1)/n),\gamma_\omega(m'/n)\right)+ d\left(\gamma_\omega(m'/n),\gamma_\omega(t)\right)\\
    &\leq d\left(\gamma_\omega(m/n),\gamma_\omega((m+1)/n)\right)+d\left(\gamma_\omega((m+1)/n),\gamma_\omega(m'/n)\right)+ d\left(\gamma_\omega(m'/n),\gamma_\omega((m'+1)/n)\right)\\
    &\leq 3\max\{ d\left(\gamma_\omega(m/n),\gamma_\omega(m'/n)\right), |m/n-m'/n|<\delta\}
\end{align*}

We can then estimate the probability that the modulus of continuity of the trajectories of $Z_n$ exceeds a given $\varepsilon >0$ as
\begin{eqnarray*}
&&\mu_n \left(\{\gamma\in C_M[0,T]\colon w(\gamma ,\delta)>\varepsilon\}\right)\\
&&\leq \mu_n \left(\{\gamma\in C_M[0,T]\colon \max_m\{d(\gamma (m/n),\gamma (m+1)/n))\}>\varepsilon/3\}\right)\\
& &=\mu_{X_n}\left(\{\gamma\in D_M[0,T]\colon
w(\gamma ,\delta)>\varepsilon /3\}\right)
\end{eqnarray*}
By theorem \ref{thconvD} and lemma \ref{lemma4}, we get for any $\varepsilon>0$
$$\lim_{\delta\downarrow 0}\limsup_n \mu_n \left(\{\gamma\in C_M[0,T]\colon w(\gamma ,\delta)>\varepsilon\}\right) =0$$
Since $Z_n(0)=x$ for any $n$, the sequence of probability measures $\{\mu_n\}$ is tight \cite{Bil} and the measure $\mu$, i.e. the law of $X$ is the only possible limit point.

\subsection{A different interpolation scheme}
Let us consider the sequence of processes $(\tilde Z_n)_n$ with sample paths in $C_{M}[0,T)$, obtained by continuous interpolation of the paths of $(X_n)_n$ by means of integral curves of the vector fields $A_k$, $k=0,...,r$.
More precisely, introduced a sequence of i.i.d. discrete random variables $\xi_j$ with distribution 
$$\bP(\xi_j =0)=1/2, \qquad \bP(\xi_j =k)=\frac{1}{2r}, \quad k=1,\dots, 2r,$$
and the map $\tau:\{0,\dots,2r \}\times [0,1]\to \bR$ defined by
$$\tau(k, t)=\begin{cases}
2t & k=0\\
\sqrt{2rt} & k=1,\dots 2r
\end{cases}$$
the process $(\tilde Z_n(t))_{t\in \bR^+}$ can be defined as 
\begin{equation}\label{deftildeZ-n}
\begin{cases}
\tilde Z_n (0)\equiv x,\\
\tilde Z_n(t)= \gamma_{\tilde Z_n(m/n), (-1)^{\xi_m}A_{\xi_m/2}}(\tau (\xi_m, t-m/n)) \qquad t\in [m/n,(m+1)n], 
\end{cases} 
\end{equation}
where for a$x\in M$ and a smooth vector field $A$ on $M$, $\gamma _{x,A}$ denotes the maximal solution of the Cauchy problem \eqref{CP-M}.
In particular the following holds:
\begin{equation*}
\quad \tilde Z_n (m/n)=X_n(m/n), \;\: m\in \bN.
\end{equation*}
\end{proof}
Analogously to the case of geodesic interpolation studied in the previous section, it is possible to prove the weak convergence of $\tilde Z_n $ to $X$ on $C_M[0,T]$. Let $\tilde \mu _n$ (resp. $\mu$) be the Borel probability measure on $C_M[0,T]$ induced by the process $\tilde Z_n$ (resp. $X$).

\subsubsection{A technical interlude}
In this subsection we introduce some results that will be applied to the proof of theorem \ref{teo-conv-lastZn}.

In this section, if $t=
\sum_{i=1}^dt^ie_i$  and $s=
\sum_{i=1}^ds^ie_i$,
where $(e_j)_{j=1,\ldots,d}$ is the standard orthonormal basis of $\bR^d$,
  $$\|t\|:= \sqrt{\sum_{i=1}^d (t^i)^2}\quad\mbox{and}\quad \langle t, s\rangle := \sum_{i=1}^d t^is^i$$  respectively denote the 
standard Euclidean norm and the standard inner product in $\bR^d$.  Furthermore, $d_{\bR^d}(p,q) := \|p-q\|\in [0,+\infty)$ denotes the usual Euclidean distance of $p,q \in \bR^d$.
\\
Let us start by considering the case where $M=\bR^d$.
\begin{proposition}\label{TRd}
Let $A\colon\bR^d\to\bR^d$ 
be a smooth vector field such that, for some $M_1,M_2 \in (0,+\infty)$,
\begin{enumerate}
    \item $\|A(x)\|\leq M_1$ if $x\in \bR^d$
    \item the components $A^i\colon\bR^d\to\bR$  satisfy 
    $\|\nabla A^i(x)\|\leq M_2$  for all $i=1,..,d$ if $x\in \bR^d$.
\end{enumerate}
Consider the unique maximal and complete (for 1) smooth  solution $\gamma\colon \bR \to \mathbb{R}^d$ of the Cauchy problem 
\begin{equation}\label{CPZn}
\left\{\begin{array}{l}
     \dot \gamma (t)=A(\gamma(t))\\
     \gamma (0)=\gamma_0
\end{array}\right.
\end{equation}
for every $\gamma_0 \in \bR^d$ and define $d_{\gamma_0}:[0,+\infty) \to \bR$  as $$d_{\gamma_0}(t):= d_{\bR^d}(\gamma(0), \gamma(t)).$$
Then there exists  $T>0$ independent of $\gamma_0$ such that  the function $d_{\gamma_0}$ is   non-decreasing in $[0,T]$. 
Even more, $d_{\gamma_0}$ is strictly increasing in $[0,T]$  if $A(\gamma_0)\neq 0$. 
\end{proposition}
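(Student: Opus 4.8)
The plan is to analyze the derivative of $d_{\gamma_0}^2(t)$ and show it is nonnegative on a suitably small time interval $[0,T]$, with $T$ uniform in $\gamma_0$. Working with the square avoids the non-differentiability of the Euclidean distance at $t=0$. Writing $\delta(t) := \gamma(t) - \gamma(0)$, so that $d_{\gamma_0}^2(t) = \|\delta(t)\|^2$, I compute
\begin{equation}\label{eqderiv}
\frac{d}{dt} d_{\gamma_0}^2(t) = 2 \langle \delta(t), \dot\gamma(t) \rangle = 2 \langle \gamma(t)-\gamma(0), A(\gamma(t)) \rangle.
\end{equation}
The idea is that at $t=0$ this derivative vanishes, but for small $t>0$ the vector $\gamma(t)-\gamma(0)$ is well approximated by $t A(\gamma(0))$, which is aligned with $A(\gamma(t))\approx A(\gamma(0))$, so the inner product is positive to leading order. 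The task is to make this quantitative with constants depending only on $M_1, M_2$.

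The key step is to control the two factors in \eqref{eqderiv}. First, since $\|\dot\gamma(s)\| = \|A(\gamma(s))\| \leq M_1$, integrating gives $\gamma(t)-\gamma(0) = \int_0^t A(\gamma(s))\,ds$, and hence
\begin{equation}\label{eqapprox}
\left\| \big(\gamma(t)-\gamma(0)\big) - t\,A(\gamma(0)) \right\| = \left\| \int_0^t \big(A(\gamma(s)) - A(\gamma(0))\big)\,ds \right\| \leq \int_0^t \sqrt{d}\,M_2 \|\gamma(s)-\gamma(0)\|\,ds,
\end{equation}
using hypothesis (2) together with the mean value estimate $|A^i(y)-A^i(z)| \leq M_2\|y-z\|$ componentwise (the $\sqrt{d}$ collects the components into the Euclidean norm). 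Since $\|\gamma(s)-\gamma(0)\| \leq M_1 s$ by hypothesis (1), the right-hand side of \eqref{eqapprox} is bounded by $\tfrac{1}{2}\sqrt{d}\,M_1 M_2 t^2$. Thus $\gamma(t)-\gamma(0) = t A(\gamma(0)) + O(t^2)$ with a uniform constant. Similarly $\|A(\gamma(t)) - A(\gamma(0))\| \leq \sqrt{d}\,M_2 \|\gamma(t)-\gamma(0)\| \leq \sqrt{d}\,M_1 M_2 t$.

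Substituting these estimates into \eqref{eqderiv}, I expand $\langle \gamma(t)-\gamma(0), A(\gamma(t))\rangle$ around the leading term $t\,\langle A(\gamma(0)), A(\gamma(0))\rangle = t\|A(\gamma(0))\|^2 \geq 0$ and bound all error terms by $C t^2$ with $C := C(d, M_1, M_2)$ a fixed constant. This yields $\frac{d}{dt}d_{\gamma_0}^2(t) \geq 2t\|A(\gamma(0))\|^2 - 2Ct^2 = 2t\big(\|A(\gamma(0))\|^2 - C t\big)$, so the derivative is nonnegative whenever $t \leq \|A(\gamma(0))\|^2 / C$. This bound unfortunately degenerates as $\|A(\gamma(0))\| \to 0$, which is the main obstacle: I cannot simply take $T$ uniform from this inequality alone. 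To resolve it I would sharpen the error analysis so that the quadratic error carries a factor of $\|A(\gamma(0))\|$ as well — indeed, since both $\gamma(t)-\gamma(0)$ and $A(\gamma(t))-A(\gamma(0))$ are themselves controlled by $\|A(\gamma(0))\|$ along the trajectory (via a Grönwall argument showing $\|A(\gamma(s))\| \leq \|A(\gamma(0))\| e^{\sqrt{d}M_2 s}$, hence $\|\gamma(t)-\gamma(0)\|$ scales with $\|A(\gamma(0))\|$), every error term actually carries the factor $\|A(\gamma(0))\|^2$. Factoring it out gives $\frac{d}{dt}d_{\gamma_0}^2(t) \geq 2t\|A(\gamma(0))\|^2(1 - C' t)$ for a uniform $C'$, so choosing $T := 1/(2C')$ makes the derivative nonnegative on $[0,T]$ regardless of the value of $\|A(\gamma(0))\|$, with strict positivity for $t\in(0,T]$ when $A(\gamma(0))\neq 0$. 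This $T$ depends only on $d, M_1, M_2$, establishing the claim.
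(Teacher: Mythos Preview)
Your proposal is correct and follows essentially the same strategy as the paper: differentiate $d_{\gamma_0}^2$, identify the leading term as a nonnegative multiple of $\|A\|^2$ at a point on the trajectory, and then use a Gr\"onwall-type bound to show that every error term carries the same factor $\|A\|^2$, so it can be absorbed for $t$ smaller than a threshold depending only on $d$ and $M_2$. The only organisational difference is that the paper expands $A(\gamma(u))$ around the moving point $\gamma(t)$ and iterates the integral inequality explicitly to obtain $\|\gamma(u)-\gamma(t)\|\le \tfrac{\|A(\gamma(t))\|}{M_2}\bigl(e^{M_2\sqrt{d}(t-u)}-1\bigr)$, which yields the clean explicit bound $T<\tfrac{1}{M_2\sqrt{d}}\ln\bigl(1+\tfrac{1}{d}\bigr)$; you instead anchor at $\gamma(0)$ and invoke Gr\"onwall on $\|A(\gamma(s))\|$ directly, which is slightly slicker but gives a less explicit~$T$.
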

\begin{proof}
First of all let us remark that if $A(\gamma(0))=0$ then $d_{\gamma_0}(t)=d_{\bR^d}(\gamma(0), \gamma(t)) =0$ 
and the result holds trivially for any $T>0$. 
Let us therefore  restrict ourselves to the case  $A(\gamma(0))\neq 0$ where, by the local uniqueness 
of the solutions of the Cauchy problem  \eqref{CPZn}, 
we have that $A(\gamma(t))\neq 0$ for all $t\neq 0$.
 Let $f_{\gamma_0}\colon[0,+\infty) \to \mathbb{R}$ be the  smooth map $f_{\gamma_0}(t)=d_{\gamma_0}(t)^2=\|\gamma (t)-\gamma (0)\|^2$. 
To prove the   thesis it is enough to demonstrate  that
\begin{equation} \mbox{\em if $A(\gamma_0)\neq 0$, then  there exists $T>0$ independent of $\gamma_0$ such that 
 $f_{\gamma_0}'(t)>0$ for all $t\in (0,T]$.} \label{newstat} \end{equation}
To prove  (\ref{newstat}), we start by notincing that
 linearity and symmetry of the inner product in $\mathbb{R}^d$ and
the trivial identity arising from (\ref{CPZn})
\begin{equation}\gamma(t)-\gamma(u) = \int_u^t A(\gamma(s))ds\label{easyEq}\end{equation}
 yield
\begin{equation}
 f_{\gamma_0}(t)=\left<\int_0^t A(\gamma (s))ds,\int_0^t A(\gamma (u))du\right>=
      2\int_0^t\int_0^s \left<A(\gamma (s)),A(\gamma (u))\right>duds.\nonumber
\end{equation}
The derivative $f_{\gamma_0}'(t)$ appearing in  (\ref{newstat}) therefore admits the explicit form
\begin{equation} f_{\gamma_0}'(t)=2\int_0^t \left<A(\gamma (t)),A(\gamma (u))\right>du. \label{firstf}\end{equation}
The components  $A^i(\gamma (u))$  ($i=1,..,d$) of the vector field $A(\gamma(u))$
 can be expanded  as
\begin{equation}\label{Corr-eq}A^i(\gamma (u))= A^i(\gamma (t))+\left<\nabla A^i(\xi_{i,u,t} ),\gamma (u)-\gamma(t)\right>\end{equation}
where, according to the Lagrange for of the remainder  of the $\mathbb{R}^d$ Taylor expansion, 
\begin{equation}\label{Corr-eq2} \xi_{i,u,t}=   \gamma (t)+\theta_i (\gamma (u)-\gamma(t))\quad \mbox{with $ \theta_i\in [0,1]$.
}
\end{equation}
Plugging (\ref{Corr-eq}) in the right-hand side of (\ref{firstf}), a trivial computation leads to
\begin{equation}\label{3.3.2}f_{\gamma_0}'(t)=2\int_0^t \sum_{i=1}^d\Big(|A^i(\gamma (t))|^2+A^i(\gamma (t))\left<\nabla A_i(\xi_{i,u,t}),\gamma (u)-\gamma(t)\right>\Big)du. \end{equation}
The proof the theorem  ends proving that there exists 
 $T>0$ such that,
if  $0\leq  t\leq T$,  then 
\begin{equation}\label{3.3.3}
\sum_{i=1}^d|A^i(\gamma (t))\left<\nabla A^i(\xi_{u,t}),\gamma (u)-\gamma(t)\right>|\stackrel{\textrm{wish to prove}}{<} 
  \sum_{i=1}^d  |A^i(\gamma (t))|^2=\|A(\gamma (t))\|^2.
\end{equation}
Indeed, (\ref{3.3.3}) entails that the integrand in \eqref{3.3.2} -- that is the one  in (\ref{firstf}) -- is  strictly positive so that (\ref{newstat}) is valid
because the integrand of (\ref{firstf})  is also $u$-continuous.
To prove  (\ref{3.3.3}), let us focus on its left-hand side.
It is bounded by 
\begin{multline}\label{3.3.3-1}
\sum_{i=1}^d|A^i(\gamma (t))\left<\nabla A^i(\xi_{u,t}),\gamma (u)-\gamma(t)\right>|
\leq \sum_{i=1}^d|A_i(\gamma (t))|\ |\left<\nabla A^i(\xi_{u,t}),\gamma (u)-\gamma(t)\right>|\\
\leq   \sum_{i=1}^d  \|A(\gamma (t))\|  \|\nabla A^i(\xi_{u,t})\| \|\gamma (u)-\gamma(t)\|
\leq  d M_2   \|A(\gamma (t))\| \|\gamma (u)-\gamma(t)\|.
\end{multline}
The bound (\ref{3.3.3-1}) can be further improved 
estimating $ \|\gamma (u)-\gamma(t)\|$
with the following argument where  we 
use  the notation $\gamma(t)=\sum_{i=1}^d\gamma^i(t)e_i$ and 
we exploit again (\ref{easyEq}) and (\ref{Corr-eq})-(\ref{Corr-eq2}).
\begin{align*}
\gamma_i(u)-\gamma _i(t)&=\int_t^uA_i(\gamma (s))ds
=\int_t^uA^i(\gamma (t))ds+\int_t^u \left<\nabla A^i(\xi_{i,s,t}),\gamma (s)-\gamma (t)\right>ds\\
&= A^i(\gamma (t)) (u-t)+\int_t^u\left<\nabla A^i(\xi_{i,s,t}),\gamma (s)-\gamma (t)\right>ds. \label{in-45}
\end{align*}
Since $\|\nabla A_i(x)\|\leq M_2$ due to condition 2, we therefore have
\begin{equation*}
|\gamma_i (u)-\gamma_i(t)|\leq \|A(\gamma (t)\|(t-u)+\int_u^t M_2\|\gamma (s)-\gamma (t)\|ds,
\end{equation*}
so that
\begin{equation}\label{ineqforiter}
    \|\gamma (u)-\gamma(t)\|\leq \sqrt d\left(\|A(\gamma(t))\|(t-u)+\int_u^t M_2\|\gamma (s)-\gamma (t)\|ds\right).
\end{equation}
Let us iterate this inequality for $\|\gamma (u)-\gamma(t)\|$ finding an improved 
estimate in terms of $\|A(\gamma(t))\|$ and $t-u$, hence in terms of $T$ because $0\leq u\leq t\leq T$. 
Let us start by  applying  inequality (\ref{ineqforiter}) to the term $\|\gamma (s)-\gamma (t)\|$ on the 
integrand in the right-hand side of  (\ref{ineqforiter}):
\begin{multline*}
    \|\gamma (u)-\gamma(t)\|\leq \sqrt d\|A(\gamma(t))\|\left((t-u)
    +M_2
        \sqrt d\int_u^t(t-s_1)ds_1\right)\\
        +(M_2
        \sqrt d)^2\int_u^t \int_{s_1}^t
        \|\gamma (s_2)-\gamma (t)\|ds_2
            ds_1.
\end{multline*}
Applying (\ref{ineqforiter}) again, we obtain
\begin{multline*}
    \|\gamma (u)-\gamma(t)\|\leq \sqrt d\|A(\gamma(t))\|\left((t-u)
    +M_2
        \sqrt d\int_u^t(t-s_1)ds_1+(M_2
        \sqrt d)^2\int_u^t \int_{s_1}^t(t-s_2)ds_2ds_1\right)\\
               +(M_2
        \sqrt d)^3\int_u^t \int_{s_1}^t
        \int_{s_2}^t \|\gamma (s_3)-\gamma (t)\|ds_3
                ds_2
            ds_1.
\end{multline*}
To state the general estimate, let us introduce the $n$-dimensional orthogonal simplex,
$$\Delta_n:=\{(s_1,...,s_n)\in [u,t]^n\colon u\leq s_1\leq...\leq s_n\leq t\}$$
which is the  a corner of $n$-dimensional cube $[u,t]^n$ with length of the corner-touching edges $t-u$. 
The $n$-dimensional Lebesgue measure of $\Delta_n$ equals $|\Delta_n|=(t-u)^n/n!$ and a direct calculation shows that 
$|\Delta_{n+1}|=\int_{\Delta_{n}}(t-s_n)ds_n\dots ds_1$. With this notation, the last inequality reads
\begin{multline*}
    \|\gamma (u)-\gamma(t)\|\leq \sqrt d\|A(\gamma(t))\|\left(|\Delta_1|
    +M_2
        \sqrt d|\Delta_2|+(M_2
        \sqrt d)^2|\Delta_3|\right)\\
               +(M_2
        \sqrt d)^3\int_{\Delta_3}  \|\gamma (s_3)-\gamma (t)\|ds_3
                ds_2
            ds_1.
\end{multline*}
Applying (\ref{ineqforiter}) to this inequality as many times as we need for each $n\geq 1$, and recalling that  
$M_2\neq 0$, we have
\begin{multline*}
    \|\gamma (u)-\gamma(t)\|\leq \sqrt d\|A(\gamma(t))\|\frac{1}{M_2
        \sqrt d}\left(M_2
        \sqrt d|\Delta_1|
    +(M_2
        \sqrt d)^2|\Delta_2|+\dots+(M_2
        \sqrt d)^n|\Delta_{n}|\right)\\
               +(M_2
        \sqrt d)^{n+1}\int_{\Delta_{n}}  \|\gamma (s_{n})-\gamma (t)\|ds_{n}
                \dots 
            ds_1,
\end{multline*}
which, after exploiting  $|\Delta_n|=(t-u)^n/n!$, becomes
\begin{multline}\label{alslastest}
    \|\gamma (u)-\gamma(t)\|\leq \frac{\|A(\gamma(t))\|}{M_2}\sum_{m=1}^n\frac{\left(M_2
        \sqrt d(t-u)\right)^n}{n!}
               +(M_2
        \sqrt d)^{n+1}\int_{\Delta_{n}}  \|\gamma (s_{n})-\gamma (t)\|ds_{n}
                \dots 
            ds_1.
\end{multline}
An estimate of the remainder in this formula arises 
from  the trivial bound
\begin{equation}\label{derivedfrom3.3.1}
\|\gamma (t)-\gamma (u)\|=
\left\|\int _u^t A(\gamma(s))ds\right\|
\leq \int _u^t \|A(\gamma(s))\|ds
\leq M_1(t-u)\quad \mbox{if $0\leq u\leq t$}
\end{equation}
which specializes to  $\|\gamma (s_{n})-\gamma (t)\|\leq M_1(t-s_n)$ in (\ref{alslastest}), yielding
\begin{multline*}
\left|(M_2 \sqrt d)^{n+1}\int_{\Delta_{n}}  \|\gamma (s_{n})-\gamma (t)\|ds_{n}\dots ds_1\right|
\leq (M_2 \sqrt d)^{n+1}\int_{\Delta_{n}}  M_1(t-s_n)ds_{n}\dots ds_1\\
=M_1(M_2 \sqrt d)^{n+1}|\Delta_{n+1}|=\frac{M_1\left(M_2 \sqrt d(t-u)\right)^{n+1}}{(n+1)!}\longrightarrow 0 \textrm{ as }n\to\infty.
\end{multline*}
As a consequence, taking the limit as $n\to\infty$  in (\ref{alslastest}), we finally obtain
\begin{equation}\label{alslastest-1}
\|\gamma (u)-\gamma(t)\|\leq \frac{\|A(\gamma(t))\|}{M_2}\left(e^{M_2\sqrt d(t-u)}-1\right).
\end{equation}
 Combining (\ref{alslastest-1}) with (\ref{3.3.3-1}) we find
\begin{multline*}
\sum_{i=1}^d|A^i(\gamma (t))\left<\nabla A^i(\xi_{u,t}),\gamma (u)-\gamma(t)\right>|
\leq  d M_2   \|A(\gamma (t))\| \|\gamma (u)-\gamma(t)\|\\
\leq
d M_2\|A(\gamma (t))\|\frac{\|A(\gamma(t))\|}{M_2}\left(e^{M_2\sqrt d(t-u)}-1\right)=d\|A(\gamma (t))\|^2\left(e^{M_2\sqrt d(t-u)}-1\right)\\
\leq d\|A(\gamma (t))\|^2\left(e^{M_2\sqrt dT}-1\right)\:,
\end{multline*}
where at the last step we used the fact that  that $\mathbb{R}\ni y\longmapsto e^y$ is monotonically increasing and that 
 $t-u\leq T$ because $0\leq u\leq t\leq T$.
In summary,  we have established that, for all $T>0$, if  $0\leq u\leq t\leq T$, then
\begin{equation}\label{semifinalest-1}
    \sum_{i=1}^d|A^i(\gamma (t))\left<\nabla A^i(\xi_{u,t}),\gamma (u)-\gamma(t)\right>|
\leq d\|A(\gamma (t))\|^2\left(e^{M_2\sqrt dT}-1\right).
\end{equation}
This inequality is sufficient to prove  (\ref{3.3.3}) concluding the proof, just choosing 
 $T>0$ such that \begin{equation}\label{semifinalest-11}d\|A(\gamma (t))\|^2\left(e^{M_2\sqrt dT}-1\right)<\|A(\gamma (t))\|^2\quad \mbox{if $0\leq  t \leq T$}\:.\end{equation} 
This is always feasible because, as observed at the beginning of the proof,
$A(\gamma(t)) \neq 0$ if $A(\gamma(0)) \neq 0$ as we supposed in (\ref{newstat}).
We can therefore divide both sides of (\ref{semifinalest-11})
for $||A(\gamma(t))|| \neq 0$, and the resulting inequality is solved as (taking the constraint $T>0$ into account), 
\begin{equation}
0< T < \frac{1}{M_2\sqrt{d}}\ln \left( 1+ \frac{1}{d}\right) \label{T}
\end{equation}
Notice that this $T$ can be chosen independent of $\gamma_0=\gamma(0)$.
\end{proof}

This result can be extended to  Riemannian manifold $(M,g)$ of bounded geometry. Indeed, in this case the following result allows to prove a bound for  the euclidean norm of the components of a vector field $A$ annd of its covariant derivative $\nabla A$ in local normal charts in terms of their Riemannian norm $\|A\|_g$ and $\|\nabla A\|_g$.

\begin{proposition}\label{teo2} Let $(M,g)$ be a $d$-dimensional  smooth Riemannian manifold of bounded geometry. If $r_0 \in (0,I_{(M,g)})$ is sufficiently small, then there exist four constants $k_1,k_2,k_3.k_4 \in [0,+\infty)$ such that for every  local  normal Riemannian chart centered at every $p\in M$ 
$(B^{(M,g)}_{r_0}(p), \exp^{-1}_{p,N})$ with 
coordinates $y^1,\ldots,y^n$ and  every smooth vector field  $A$ on $M$, the following uniform bounds hold:
\begin{itemize}
\item[(a)]  $||A(y(q))||^2 \leq k_1 ||A(q)||^2_g$ \:,
\item[(b)]  $||\nabla A(y(q))||^2 \leq k_2 ||\nabla^{(g)} A(q)||^2_g  + k_3 ||A(q)||^2_g + k_4 ||\nabla^{(g)}  A(q)||_g  ||A(q)||_g$ \:,
\end{itemize}
when $q\in B^{(M,g)}_{r_0}(p)$  (i.e. $y(q)\in B_{r_0}(0) \subset \bR^d$).
\end{proposition}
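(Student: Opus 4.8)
The plan is to reduce both bounds to a single geometric fact: in normal coordinates on a manifold of bounded geometry the Euclidean quadratic form $\delta_{ab}$ and the metric components $g_{ab}(y)$ are \emph{uniformly} equivalent, with equivalence constants that depend neither on the centre $p$ of the chart nor on the field $A$. This uniformity is exactly what is supplied by the estimates (\ref{estimateg}), (\ref{estimateg2}) and (\ref{estimateg3}), all of which hold in every normal chart of a fixed radius $r_0 \in (0, I_{(M,g)})$ with constants independent of $p$. I fix such an $r_0$ once and for all and denote by $\lambda_{\min}(g)$, $\lambda_{\max}(g)$ the extreme eigenvalues of the matrix $[g_{ab}(y(q))]$; by (\ref{estimateg}) one has $\lambda_{\max}(g) \leq d\,C_0^{(r_0)}$, while (\ref{estimateg2}) gives $\lambda_{\max}([g^{ab}]) \leq d\,H_0^{(r_0)}$, that is $\lambda_{\min}(g) \geq (d\,H_0^{(r_0)})^{-1}$.

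First I would settle (a). Writing $A = \sum_i A^i \partial_{y^i}$, the left-hand side is $\|A(y(q))\|^2 = \sum_i (A^i)^2$, whereas $\|A(q)\|_g^2 = \sum_{a,b} g_{ab}(y(q))A^aA^b \geq \lambda_{\min}(g)\sum_i(A^i)^2$. Hence (a) holds with $k_1 := \lambda_{\min}(g)^{-1} \leq d\,H_0^{(r_0)}$, a bound uniform in $p$.

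For (b) the decisive identity is the coordinate form of the covariant derivative,
\begin{equation*}
\partial_{y^j} A^i = (\nabla^{(g)} A)^i{}_j - \sum_k \Gamma^i_{jk}(y)\,A^k .
\end{equation*}
Abbreviating $B^i{}_j := (\nabla^{(g)} A)^i{}_j$ and $C^i{}_j := \sum_k \Gamma^i_{jk}A^k$, the Euclidean (Frobenius) norm of the Jacobian on the left of (b) obeys
\begin{equation*}
\|\nabla A(y(q))\|^2 = \sum_{i,j}\big(B^i{}_j - C^i{}_j\big)^2 \leq \|B\|_{\mathrm{Frob}}^2 + 2\,\|B\|_{\mathrm{Frob}}\|C\|_{\mathrm{Frob}} + \|C\|_{\mathrm{Frob}}^2,
\end{equation*}
the cross term being handled by Cauchy--Schwarz on the space of matrices. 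It then remains to convert the two Frobenius norms into the Riemannian ones. For $B$ I would use that the pointwise Riemannian norm of a $(1,1)$-tensor reads $\|T\|_g^2 = \sum g_{ac}g^{bd}T^a{}_b T^c{}_d$, a positive-definite quadratic form in the entries $T^a{}_b$ whose coefficients are bounded above by (\ref{estimateg})--(\ref{estimateg2}) and whose smallest eigenvalue equals $\lambda_{\min}(g)/\lambda_{\max}(g)$; consequently $\|B\|_{\mathrm{Frob}}^2 \leq (\lambda_{\max}(g)/\lambda_{\min}(g))\,\|\nabla^{(g)}A\|_g^2 =: k_2\,\|\nabla^{(g)}A\|_g^2$, with $k_2$ uniformly bounded. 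For $C$, Cauchy--Schwarz in the summation index $k$, the uniform bound $|\Gamma^i_{jk}|\leq J_0^{(r_0)}$ from (\ref{estimateg3}), and part (a) give $\|C\|_{\mathrm{Frob}}^2 \leq d^3(J_0^{(r_0)})^2\,\|A(y(q))\|^2 \leq d^3(J_0^{(r_0)})^2 k_1\,\|A\|_g^2 =: k_3\,\|A\|_g^2$. Substituting these two estimates into the displayed inequality yields (b) with the stated $k_2, k_3$ and $k_4 := 2\sqrt{k_2 k_3}$.

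The only genuinely delicate point --- and the one I expect to be the main obstacle --- is the uniformity of every constant in the centre $p$. Independence of $A$ is automatic, since all the inequalities are linear-algebraic relations among the components of $A$ and $\nabla^{(g)}A$ evaluated at the single point $q$. Independence of $p$ is precisely the content of bounded geometry: the same constants $C_0^{(r_0)}$, $H_0^{(r_0)}$, $J_0^{(r_0)}$ serve every normal chart of radius $r_0$ simultaneously. In particular the two-sided control of the tensor quadratic forms needed above --- an upper bound on $\lambda_{\max}(g)$ from (\ref{estimateg}) and a strictly positive lower bound on $\lambda_{\min}(g)$ from (\ref{estimateg2}) --- is available at once across all charts, after which the estimates for (a) and (b) go through verbatim at every $p \in M$.
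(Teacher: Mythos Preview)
Your argument is correct, but it follows a genuinely different route from the paper's. The paper does \emph{not} use eigenvalue bounds coming directly from (\ref{estimateg2}); instead it Taylor-expands $g_{ab}(y)=\delta_{ab}+R^{(2)}_{ab}(y)$ about the centre of the normal chart (where $g_{ab}(0)=\delta_{ab}$ and $\partial g_{ab}(0)=0$), bounds the second-order remainder by $\tfrac{1}{2}C_2^{(r)}d^2 r^2$, and deduces $\big(1-\tfrac{C_2^{(r_0)}d^4 r_0^2}{2}\big)\|A\|^2\le \|A\|_g^2$. This is why the statement carries the hypothesis ``$r_0$ sufficiently small'': the paper needs $r_0$ small enough to make that leading factor positive, and then sets $k_1=(1-\tfrac{C_2^{(r_0)}d^4 r_0^2}{2})^{-1}$. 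For the $(1,1)$-tensor bound needed in (b) it repeats the same Taylor trick on $g^{ab}(y)g_{ij}(y)$ to get a $k_2$, and only then performs the same decomposition $\partial_{y^a}A^i=(\nabla^{(g)}A)^i{}_a-\sum_c\Gamma^i_{ac}A^c$ that you use.

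Your approach is cleaner: by reading off $\lambda_{\min}(g)\ge (dH_0^{(r_0)})^{-1}$ and $\lambda_{\max}(g)\le dC_0^{(r_0)}$ directly from (\ref{estimateg})--(\ref{estimateg2}), you obtain (a) and the tensor comparison for any fixed $r_0\in(0,I_{(M,g)})$, so the smallness hypothesis is not actually used. The price is that your constants are expressed in terms of $C_0^{(r_0)}$ and $H_0^{(r_0)}$ rather than the second-derivative bound $C_2^{(r_0)}$, but this is immaterial for the application. The final assembly of (b) --- splitting off the Christoffel term, bounding it via $J_0^{(r_0)}$ and part (a), and getting $k_4=2\sqrt{k_2k_3}$ from the cross term --- is essentially identical in both proofs.
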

\noindent Above $\nabla$ denotes the standard gradient in $\bR^d$  and $||\cdot ||$ indicates the standard pointwise Euclidean norm of vectors  and  $\bR^d$-$(1,1)$ tensors referring to their components in Cartesian coordinates $y^1,\ldots, y^d$:
$$||A(y)||^2 = \sum_{a=1}^d |A^a(y)|^2\:\quad \mbox{and}\quad ||T(y)||^2 := \sum_{a,b=1}^d |T_b^a(y)|^2\:,$$
whereas $||\cdot ||_g$ denotes the previously defined natural point-wise norm associated to the metric $g$ acting on vector fields and  tensor fields of order $(1,1)$ and $\nabla^{(g)}$ is the Levi-Civita covariant derivative associated to the metric.

\begin{proof}
See the appendix
\end{proof}

We are now in a position to state the final result which extends proposition \ref{TRd} to Riemannian manifolds of {bounded} geometry.

\begin{proposition}\label{teo4.5}
Let $(M,g)$ 
be a smooth    Riemannian manifold of bounded geometry (thus geodesically complete) and $A$ a smooth vector field on $M$ such that, for some  $c_1,c_2 \in (0,+\infty)$,
\begin{enumerate}
    \item $\sup_{x\in M}\|A(x)\|_g \leq  c_1$,
    \item $\sup_{x\in M}\|\nabla^{(g)} A\|_g \leq c_2 $.
\end{enumerate}
Consider the unique maximal and complete (for 1) smooth  solution $\gamma\colon \bR \to M$ of the Cauchy problem 
\begin{equation}\label{CPZn2}
\left\{\begin{array}{l}
     \dot \gamma (t)=A(\gamma(t))\\
     \gamma (0)=\gamma_0
\end{array}\right.
\end{equation}
for every $\gamma_0 \in M$ and define $d_{\gamma_0}:[0,+\infty) \to \bR$  as $$d_{\gamma_0}(t):= d_{(M,g)}(\gamma(0), \gamma(t)).$$
Then,  there exists  $T>0$ independent of $\gamma_0$ such that  the function $d_{\gamma_0}$ is   non-decreasing in $[0,T]$. 
Even more, $d_{\gamma_0}$ is strictly increasing in $[0,T]$  if $A(\gamma_0)\neq 0$. 
\end{proposition}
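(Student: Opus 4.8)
The plan is to reduce the statement to its Euclidean counterpart, Proposition \ref{TRd}, by passing to normal Riemannian coordinates centered at the starting point $\gamma_0$ and exploiting the uniform norm comparison of Proposition \ref{teo2} together with the bounded geometry of $(M,g)$.

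First I would fix $r_0 \in (0, I_{(M,g)})$ small enough that Proposition \ref{teo2} applies, and for an arbitrary $\gamma_0 \in M$ introduce the normal Riemannian chart $(B^{(M,g)}_{r_0}(\gamma_0), \exp^{-1}_{\gamma_0,N})$ with coordinates $y^1,\ldots,y^d$ normalized so that $y(\gamma_0)=0$. The crucial geometric input is the distance identity \eqref{distanceG}: as long as $\gamma(t)$ remains in $B^{(M,g)}_{r_0}(\gamma_0)$ one has $d_{\gamma_0}(t)=d_{(M,g)}(\gamma_0,\gamma(t))=\|y(\gamma(t))\|$, the ordinary Euclidean norm of the coordinate image of the curve. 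Writing $\tilde\gamma$ for this image (so $\tilde\gamma(0)=0$) and $\tilde A$ for the coordinate expression of $A$ on $B_{r_0}(0)\subset\bR^d$, the monotonicity of $d_{\gamma_0}$ becomes exactly the monotonicity of $t\mapsto \|\tilde\gamma(t)-\tilde\gamma(0)\|$, where $\tilde\gamma$ solves $\dot{\tilde\gamma}=\tilde A(\tilde\gamma)$, i.e. precisely the quantity controlled by Proposition \ref{TRd}.

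Next I would produce uniform Euclidean bounds on $\tilde A$. By Proposition \ref{teo2} (a) and (b) combined with the hypotheses $\|A\|_g\leq c_1$ and $\|\nabla^{(g)}A\|_g\leq c_2$, there are constants $M_1,M_2\in(0,+\infty)$, independent of $\gamma_0$ (depending only on $c_1,c_2,d$ and the uniform constants $k_1,\ldots,k_4$), with $\|\tilde A(y)\|\leq M_1$ and $\|\nabla \tilde A^i(y)\|\leq M_2$ for all $y\in B_{r_0}(0)$ and all $i$. Since $\tilde A$ is defined only on the ball, I would extend it to a global smooth field $\hat A$ on $\bR^d$ by multiplying with a fixed cutoff $\chi$ (equal to $1$ on $B_{r_0/2}(0)$, supported in $B_{r_0}(0)$, chosen once and for all and hence independent of $\gamma_0$); this preserves $\|\hat A\|\leq M_1$ and yields a new uniform gradient bound $M_2'$. (Equivalently, one could localize the proof of Proposition \ref{TRd}, which only uses the field along the curve and along the Euclidean segments joining $\gamma(u)$ to $\gamma(t)$.) Proposition \ref{TRd} then applies verbatim to $\hat A$, giving a threshold $T_0>0$ depending only on $M_2'$ and $d$, hence uniform in $\gamma_0$, on which $t\mapsto\|\hat\gamma(t)\|$ is non-decreasing, and strictly increasing whenever $\hat A(0)\neq 0$.

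Finally I would set $T:=\min\{T_0,\, r_0/(2M_1)\}$. Because $\|\dot{\tilde\gamma}\|=\|\tilde A(\tilde\gamma)\|\leq M_1$, one has $\|\tilde\gamma(t)\|\leq M_1 t< r_0/2$ on $[0,T]$, so the curve never leaves $B_{r_0/2}(0)$; there $\hat\gamma=\tilde\gamma$ equals the coordinate image of $\gamma$, and the distance identity gives $d_{\gamma_0}(t)=\|\hat\gamma(t)\|$ for $t\in[0,T]$. Monotonicity thus transfers from Proposition \ref{TRd}, and since the coordinate change is a diffeomorphism, $\hat A(0)=\tilde A(0)=0$ iff $A(\gamma_0)=0$, so the strict-increase clause matches. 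As $T$ is built from quantities independent of $\gamma_0$, the conclusion holds uniformly over $M$. The main obstacle is exactly securing this $\gamma_0$-uniformity while keeping the curve inside a single chart: both are resolved by bounded geometry, which supplies one radius $r_0$ and the $\gamma_0$-independent constants of Proposition \ref{teo2}, preventing $M_1,M_2$ and hence $T$ from degenerating as $\gamma_0$ ranges over $M$.
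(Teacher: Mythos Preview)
Your argument is correct and follows the same route as the paper: pass to normal Riemannian coordinates centered at $\gamma_0$, use the identity $d_{\gamma_0}(t)=\|y(\gamma(t))\|$ inside the coordinate ball, invoke Proposition~\ref{teo2} to convert the intrinsic bounds on $A$ and $\nabla^{(g)}A$ into uniform Euclidean bounds $M_1,M_2$ on the coordinate field, and then appeal to Proposition~\ref{TRd}, after first ensuring the curve does not leave the chart on the chosen time interval. The only cosmetic difference is that the paper controls the exit time via the Riemannian speed bound $d_{(M,g)}(\gamma(t),\gamma_0)\leq c_1 t$ and then reuses the \emph{proof} of Proposition~\ref{TRd} locally (exactly your parenthetical alternative), whereas you extend the coordinate field by a fixed cutoff so as to quote Proposition~\ref{TRd} as a black box; both devices yield a $T$ independent of $\gamma_0$.
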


\begin{proof} 
First of all, exactly as for the case $M=\bR^d$, we remark that if $A(\gamma(0))=0$ then
 $d_{\gamma_0}(t)=d_{(M,g)}(\gamma(0), \gamma(t)) =0$ 
and the result holds trivially for any $T>0$. 
Let us therefore  restrict ourselves to the case  $A(\gamma(0))\neq 0$ where, by the local uniqueness 
of the solutions of the Cauchy problem  \eqref{CPZn2}, 
we have that $A(\gamma(t))\neq 0$ for all $t\neq 0$.
 Let $f_{\gamma_0}\colon[0,+\infty) \to \mathbb{R}$ be the  smooth map 
$f_{\gamma_0}(t)=d_{\gamma_0}(t)^2$. 
To prove the   thesis it is enough to demonstrate  that
\begin{equation} \mbox{\em if $A(\gamma_0)\neq 0$, then  there exists $T>0$ independent of $\gamma_0$ such that 
 $f_{\gamma_0}'(t)>0$ for all $t\in (0,T]$.} \label{newstat2} \end{equation}
Statement (\ref{newstat2}) will be demonstrated by reducing to the analogous proof in $\bR^d$ here 
performed in a suitably Riemannian coordinate patch centered on $\gamma(0)$. To this end it is
 fundamental to prove that the solution $\gamma(t)$ cannot exit such a Riemannian coordinate domain.
For a given $\gamma(0)\in M$ take $r\in (0, I_{(M,g)})$ and consider the geodesical ball $B_r^{(M,g)}(\gamma(0)))$. 
We prove that there is $T'>0$, independent of $\gamma(0)$,
such that $\gamma(t) \in B_r^{(M,g)}(\gamma(0)))$ for $t\in [0,T']$.
From the definition (\ref{defd}) of $d_{(M,g)}$ we have that 
$$d_{(M,g)}(\gamma(T'),\gamma(0)) \leq \int_0^{T'} \|\dot{\gamma}(t)\| dt = 
\int_0^{T'} \|A(\gamma(t))\| dt \leq  \int_0^{T'}  c_1 dt =T' c_1\:.$$
We conclude that, defining $T' := r/c_1$, we have that $\gamma(t) \in B_r^{(M,g)}(\gamma(0)))$ for $t\in [0,T']$ as wanted. 
We henceforth restrict our attention to the ball $B_r^{(M,g)}(\gamma(0)))$, since the curve cannot exit it if $t\in [0,T')$,
 looking for $T \in (0,T')$ satisfying (\ref{newstat2}). We can describe the curve $\gamma$ in Riemannian coordinates 
$y^1,\ldots, y^d$ centered on $\gamma(0)$ inside the ball $B_r(0)\subset \bR^d$, taking advantage of the results
already proved in $\bR^d$ in proposition \ref{TRd}. Now, the crucial observation is that, due to (\ref{distanceG}) 
and noticing that $\gamma(0)$ coincides to the origin $0$ of $\bR^d$ when describing it in Riemannian
 coordinates $y^1,\ldots, y^d$, we have that
$$d_{\gamma(0)}(t) = ||\gamma(t)-\gamma(0)||\:,$$
where the norm is the Euclidean one in $\bR^n$ when describing the curve $\gamma$ in coordinates
 $\gamma(t) \equiv (y^1(t), \ldots, y^d(t))$. From now on the proof of (\ref{newstat2}) is identical to that
 of (\ref{newstat}), using the fact that, in the said coordinate patch, 
conditions 1 and 2 in proposition \ref{TRd} are true for $x \in B_r(0)$ if choosing the initial $r=r_0$ sufficiently
 small that proposition \ref{teo2} is valid (observe that this choice is independent of $\gamma(0)$).
As a matter of fact, with the said $r_0$, taking advantage of (a) and (b) in proposition \ref{teo2}, we can choose
$$M_1\geq \sqrt{k_1}c_1\quad \mbox{and}\quad M_2 \geq \sqrt{k_2c_1^2+ k_3c_2^2 + k_4c_1c_2}\:.$$
With the proof of proposition \ref{TRd} and $M_1,M_2$ as above  (taking $M_2>0$ as in the proof of proposition  \ref{TRd}), the wanted
 $T$ is every $T \in (0,T')$ which also satisfies (\ref{T}). It is clear from the procedure that $T$ can be chosen independent of $\gamma(0)$.
\end{proof}

\subsubsection{Weak convergence of the sequence $Z_n$ to $X$ }
Coming back to the sequence $\tilde Z_n$ of random walks defined in \eqref{deftildeZ-n}, the results of proposition \ref{teo4.5} allow to prove that for any $T>0$ the sequence of measures $\tilde \mu _n$ on $(C([0,t], M), \cB(C([0,t], M))$ induced by $\tilde Z_n$ converges weakly to the measure $\mu$ induced by the diffusion process $X$.
\begin{theorem}\label{teo-conv-lastZn}

Under the assumptions of theorem \ref{teo3.1}, the sequence of measures $\tilde\mu _n$ on $(C([0,t], M), \cB(C([0,t], M))$ induced by the random walks $\tilde Z_n$ defined by \eqref{deftildeZ-n} converges weakly to the measure $\mu$ on $(C([0,t], M), \cB(C([0,t], M))$  induced by the diffusion process $X$ associated with the elliptic operator $L$.  
\end{theorem}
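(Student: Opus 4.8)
The plan is to follow the very same scheme used in the proof of theorem \ref{teoconvC} for the piecewise geodesic interpolation, replacing the minimizing property of geodesics by the monotonicity of the distance along integral curves guaranteed by proposition \ref{teo4.5}. Since the measures $\tilde\mu_n$ live on $C_M[0,T]$ and $\tilde Z_n(0)=x$ is deterministic, the standard tightness criterion in $C_M[0,T]$ reduces the relative compactness of $\{\tilde\mu_n\}$ to the modulus-of-continuity estimate
\begin{equation*}
\lim_{\delta\downarrow 0}\limsup_{n}\tilde\mu_n\big(\{\gamma\in C_M[0,T]\colon w(\gamma,\delta)>\varepsilon\}\big)=0\quad\text{for every }\varepsilon>0\,,
\end{equation*}
after which I would identify the limit through finite-dimensional distributions. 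The fact I would exploit throughout is that, by construction \eqref{deftildeZ-n}, $\tilde Z_n(m/n)=X_n(m/n)$ for every $m\in\bN$, so that the behaviour of $\tilde Z_n$ at the grid points is governed by the jump process $X_n$, whose weak convergence to $X$ in $D_M[0,+\infty)$ is already established in theorem \ref{thconvD}.

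The hard part, and the only genuinely new point with respect to theorem \ref{teoconvC}, is the control of the modulus of continuity \emph{within} a single step $[m/n,(m+1)/n]$: unlike a geodesic, an integral curve of $A_k$ need not minimize distance and could a priori wander. This is exactly where proposition \ref{teo4.5} enters. Each vector field $\pm A_k$, $k=0,\ldots,r$, is $C^\infty$-bounded, hence satisfies hypotheses (1) and (2) of proposition \ref{teo4.5}; taking the minimum of the finitely many resulting times I obtain a single $T_0>0$, independent of the base point, such that along every such integral curve the distance from the starting point is non-decreasing on $[0,T_0]$. For $n$ large enough that $\sqrt{2r/n}\le T_0$ and $2/n\le T_0$, the reparametrised step curve in \eqref{deftildeZ-n} stays within this regime, and for $s\in[m/n,(m+1)/n]$ the monotonicity gives $d(\tilde Z_n(m/n),\tilde Z_n(s))\le d(\tilde Z_n(m/n),\tilde Z_n((m+1)/n))$, whence by the triangle inequality $d(\tilde Z_n(s),\tilde Z_n((m+1)/n))\le 2\,d(X_n(m/n),X_n((m+1)/n))$. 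Splitting a generic pair $s<t$ with $|t-s|<\delta$, $s\in[m/n,(m+1)/n]$, $t\in[m'/n,(m'+1)/n]$, through the grid points $(m+1)/n$ and $m'/n$ exactly as in theorem \ref{teoconvC}, these within-step bounds reduce $d(\tilde Z_n(s),\tilde Z_n(t))$ to a fixed multiple of $\max\{d(X_n(j/n),X_n(l/n))\colon|j-l|/n<\delta\}$, i.e. to the grid modulus of continuity of $X_n$.

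With this reduction in hand I would conclude as follows. Since $X_n$ is piecewise constant on the grid and $\tilde Z_n$ shares its grid values, the above pathwise estimate yields, for $n$ large, $\tilde\mu_n(\{w(\gamma,\delta)>\varepsilon\})\le\mu_{X_n}(\{w(\gamma,\delta)>\varepsilon/C\})$ for a numerical constant $C$; applying lemma \ref{lemma4} to the sequence $\mu_{X_n}$, which converges weakly to the measure $\mu$ concentrated on $C_M[0,T]$ by theorem \ref{thconvD}, gives the tightness estimate and therefore the relative compactness of $\{\tilde\mu_n\}$. To pin down the limit I would note that, by theorem \ref{thconvD} and the continuity of $X$, the finite-dimensional distributions of $X_n$ at grid times converge to those of $X$; the uniform within-step bound $d(\tilde Z_n(s),\tilde Z_n(\lfloor ns\rfloor/n))\le c_1\sqrt{2r/n}\to 0$, available from hypothesis (1) of proposition \ref{teo4.5}, transfers this to the finite-dimensional distributions of $\tilde Z_n$ at arbitrary times. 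Hence every weak limit point of $\{\tilde\mu_n\}$ is a measure on $C_M[0,T]$ with the same finite-dimensional distributions as $X$, so it must coincide with $\mu$, and the weak convergence $\tilde\mu_n\Rightarrow\mu$ follows.
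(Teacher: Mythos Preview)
Your proposal is correct and follows essentially the same route as the paper: invoke proposition \ref{teo4.5} to get a uniform $T_0>0$ on which the distance along each integral curve of $\pm A_k$ is non-decreasing, use this to bound the within-step displacement by the step-endpoint displacement, reduce the modulus of continuity of $\tilde Z_n$ to that of $X_n$ on the grid, and then conclude tightness via theorem \ref{thconvD} and lemma \ref{lemma4}. The only cosmetic differences are that you use the triangle inequality to get a constant $C$ (the paper gets $3$ by applying monotonicity both forward from $m/n$ and backward from $(m+1)/n$, the latter being the same statement for $-A_k$), and you spell out the identification of the limit via finite-dimensional distributions, which the paper leaves implicit.
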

\begin{proof}

Since by assumptions $(M,g)$ is of bounded geometry and the vector fields $\{A_k\}_{k=0,\ldots , r}$ are $C^\infty$-bounded, they satisfy the assumptions of proposition \ref{teo4.5}. In particular there exists two constants $c_1,c_2\in \bR^+$ such that for all $k=0,\dots, r$
 $$\sup_{x\in M}\|A_k(x)\|_g \leq  c_1,\quad
    \sup_{x\in M}\|\nabla^{(g)} A_k\|_g \leq c_2, $$
and there exists a $T>0$ such that for all $k=0,\dots, r$ and $x\in M$ the functions $d^k:\bR^+\to\bR$ defined as 
$d^k(t):=d(x, \gamma_{x,\pm A_k}(t)$ in non-decreasing  for $t\in [0,T]$, with $\gamma_{x,A}$ denoting the maximal solution of the Cauchy problem \eqref{CP-M}.

The main argument  is now completely similar to the one in the proof of theorem \ref{teoconvC}.
 Let us consider the trajectories $\gamma_\omega$ of the process $\tilde Z_n$, defined as $\gamma_\omega (t):=\tilde Z_n(t)(\omega)$. 
 By proposition \ref{teo4.5} there exists $T>0$ such that for any $x\in M$ we have $d(x,\gamma_x(t))\leq d(x,\gamma_x(t'))$ for all $0\leq t\leq t'\leq T$, with $\gamma _x:[0,+\infty )\to M$ is the maximal solution of the Cauchy problem \eqref{CPZn2}.
 Fix $\delta >0$ and take $n$ sufficiently large in such a way that $1/n<\min (\delta, T)$ and . Consider $s,t\in [0,T]$, $s<t$, $|t-s|<\delta$. We will have   $s\in [m/n,(m+1)/n]$ and $t\in [m'/n,(m'+1)/n]$, with $m\leq m'$, hence:
\begin{align*}
    & d(\gamma_\omega (s), \gamma _\omega (t))\\
    &\leq
    d\left(\gamma_\omega(s),\gamma_\omega((m+1)/n)\right)+d\left(\gamma_\omega((m+1)/n),\gamma_\omega(m'/n)\right)+ d\left(\gamma_\omega(m'/n),\gamma_\omega(t)\right)\\
    &\leq d\left(\gamma_\omega(m/n),\gamma_\omega((m+1)/n)\right)+d\left(\gamma_\omega((m+1)/n),\gamma_\omega(m'/n)\right)+ d\left(\gamma_\omega(m'/n),\gamma_\omega((m'+1)/n)\right)\\
    &\leq 3\max\{ d\left(\gamma_\omega(m/n),\gamma_\omega(m'/n)\right), |m/n-m'/n|<\delta\}
\end{align*}

The probability that the modulus of continuity of the trajectories of $\tilde Z_n$ exceeds a given $\varepsilon >0$ can be estimated by 
\begin{eqnarray*}
&&\mu_n \left(\{\gamma\in C_M[0,T]\colon w(\gamma ,\delta)>\varepsilon\}\right)\\
&&\leq \mu_n \left(\{\gamma\in C_M[0,T]\colon \max_m\{d(\gamma (m/n),\gamma (m+1)/n))\}>\varepsilon/3\}\right)\\
& &=\mu_{X_n}\left(\{\gamma\in D_M[0,T]\colon
w(\gamma ,\delta)>\varepsilon /3\}\right)
\end{eqnarray*}
By theorem \ref{thconvD} and lemma \ref{lemma4}, we get for any $\varepsilon>0$
$$\lim_{\delta\downarrow 0}\limsup_n \mu_n \left(\{\gamma\in C_M[0,T]\colon w(\gamma ,\delta)>\varepsilon\}\right) =0$$
Since $\tilde Z_n(0)=x$ for any $n$, the sequence of probability measures $\{\mu_n\}$ is tight \cite{Bil} and the measure $\mu$, i.e. the law of $X$ is the only possible limit point.
\end{proof}

\section{Heat equation and  Brownian motion on parallelizable manifolds  }\label{sez5}

The results of the previous sections can be also applied to the construction on the Brownian motion on $M$. Here we shall assume that the manifold $M$ is   {\bf parallelizable} i.e. that there exist smooth vector fields $\{e_k\}_{k =1,...,d}$ such that for any $x\in M$ the vectors $\{e_k \}_{k=1,...,d}$ provide a linear basis of $T_xM$.  
Examples of such manifolds are e.g. the spheres $S^1$, $S^3$, $S^7$ and Lie groups as well as orientable 3-manifolds.
Without loss of generality, we can take 
$\{e_k \}_{k =1,...,d}$ in such a way that for any $x\in M$ the vectors $\{e_k\}_{k=1,...,d}$ are orthonormal with respect to the metric tensor $g$. Further, given a local neighborhood $U$, the components $e_k^i$ the vectors $e_k$ with respect to the local basis $\partial _i:=\frac{\partial}{\partial x^i}$   satisfy the following equality:
$$\sum _{k=1}^de_k^i(x)e_k^j(x)=g^{ij}(x)$$

Let us consider the  Laplace-Beltrami operator $L_0:=\Delta_{LB}$ on $M$ defined in local coordinates on the smooth maps $u\in C^\infty(M)$ as:
$$ \Delta_{LB}u=\sum_{i,j=1}^dg^{ij}\nabla^{(g)}_i\nabla^{(g)}_ju\:,$$
or, more explicitly
$$(\Delta_{LB}u)(x)=\sum _{i,j=1}^dg^{ij}(x)\left(\frac{\partial^2u}{\partial x^i\partial x^j}(x)-\sum_{k=1}^d\Gamma_{ij}^k\frac{\partial u}{\partial x^k}(x)\right).$$

Under suitable hypotheses,  the results of previous sections can be applied to  $\Delta_{LB}$ providing on the one hand the existence of an associated  Feller semigroup  - the {\em heat semigroup} - in $C_0(M)$ and, on the other hand, a Chernoff approximation in terms of translation operators of the form \eqref{S(t)} or \eqref{eqshift1-hm}. From the probabilistic point of view, these results yield also an approximation for the Brownian motion on $M$, i.e. the diffusion process associated to the heat semigroup, in terms of the weak limit of sequences of different types of random walks on $M$.

More precisely we have the following result.

\begin{theorem} Let $(M,g)$ be a smooth    Riemannian manifold of bounded geometry.
Then the closure in $C_0(M)$ of $\Delta_{LB}|_{D_k}$ where $D_k$ is defined in (\ref{domainD})  with $L_0:= \frac{1}{2}\Delta_{LB}$ is the generator of a (unique) Feller semigroup on $C_0(M)$. Both the generator and the semigroup are independent of $k=0,1, \ldots$. 
\end{theorem}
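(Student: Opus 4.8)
The plan is to bring $\tfrac12\Delta_{LB}$ into the sum-of-squares form \eqref{Hoperator0} with a finite family of globally defined, $C^\infty$-bounded vector fields and then to invoke Theorem \ref{teoL2}. Since $\tfrac12\Delta_{LB}$ carries no zeroth order term, Theorem \ref{teoL2} (rather than \ref{teoL3}) already suffices, and it simultaneously delivers the generator property, the core $D_k$, and the independence of $k$. Thus everything reduces to producing vector fields $A_1,\dots,A_r$ with $r\ge d$ together with $A_0$, all real, smooth and $C^\infty$-bounded, such that $\tfrac12\Delta_{LB}=\tfrac12\sum_{i=1}^r A_iA_i+A_0$ and such that the operator is uniformly elliptic.

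The difficulty is that a general manifold of bounded geometry is not parallelizable, so one cannot take a single global orthonormal frame. I would therefore construct the fields by a patching argument. Bounded geometry supplies a uniformly locally finite cover of $M$ by normal coordinate balls $B^{(M,g)}_{r}(p_\alpha)$ of fixed radius $r\in(0,I_{(M,g)})$, together with a subordinate partition of unity by squares, $\sum_\alpha\chi_\alpha^2=1$, whose members are $C^\infty$-bounded with bounds independent of $\alpha$ (this rests on \eqref{estimateg}--\eqref{estimateg3}). In each ball I fix a local $g$-orthonormal frame $e_{\alpha,1},\dots,e_{\alpha,d}$, so that $\sum_{j=1}^d e_{\alpha,j}^a e_{\alpha,j}^b=g^{ab}$ there. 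Because the intersection graph of this cover has uniformly bounded degree, I would finitely color the index set so that same-colored balls are pairwise disjoint, and set, for each color $c$ and each $j$, the globally defined field $A_{c,j}:=\sum_{\alpha\in c}\chi_\alpha\, e_{\alpha,j}$. Disjointness within a color annihilates the cross terms, yielding the pointwise tight-frame identity
\[\sum_{c,j}A_{c,j}^a A_{c,j}^b=\sum_\alpha\chi_\alpha^2\sum_j e_{\alpha,j}^a e_{\alpha,j}^b=\sum_\alpha\chi_\alpha^2\,g^{ab}=g^{ab}.\]
A more compact alternative would be to fix an isometric embedding $M\hookrightarrow\bR^N$ of bounded geometry and take $A_i:=P(\partial_i)$, the orthogonal projections onto $TM$ of the coordinate fields of $\bR^N$; these again satisfy $\sum_i A_i^aA_i^b=g^{ab}$.

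Writing $A_1,\dots,A_r$ for the fields so constructed, I would then use the pointwise identity $A_i(A_i f)=\mathrm{Hess}\,f(A_i,A_i)+(\nabla^{(g)}_{A_i}A_i)f$ together with the tight-frame property $\sum_i\mathrm{Hess}\,f(A_i,A_i)=\mathrm{tr}_g\,\mathrm{Hess}\,f=\Delta_{LB}f$, obtaining
\[\Delta_{LB}f=\sum_i A_i(A_if)-\Big(\sum_i\nabla^{(g)}_{A_i}A_i\Big)f,\qquad\text{so that}\qquad A_0:=-\tfrac12\sum_i\nabla^{(g)}_{A_i}A_i.\]
Uniform ellipticity is then immediate and sharp, since the coefficients $a^{ij}$ of \eqref{op-coordinates} equal $g^{ij}$, so the defining inequality holds with constant $C=1$.

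The main obstacle is the verification that the constructed fields are genuinely $C^\infty$-bounded in the sense of Definition \ref{def22}. For the $A_{c,j}$ this follows from the uniform bounds on the $\chi_\alpha$ and on the frame components (which in turn rest on \eqref{estimateg}--\eqref{estimateg3}) and from the uniformly bounded multiplicity of the cover; for $A_0$ it follows because $C^\infty$-boundedness is preserved under products and covariant derivatives and the sum is finite. Once $C^\infty$-boundedness is secured, $A_0,A_1,\dots,A_r$ meet all hypotheses of Theorem \ref{teoL2}, which then yields that $\overline{\tfrac12\Delta_{LB}|_{D_k}}$ generates a Feller semigroup, unique and independent of $k$, completing the proof.
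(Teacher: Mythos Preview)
Your approach is correct in outline and would work, but it takes a genuinely different and considerably longer route than the paper. The paper does \emph{not} invoke Theorem \ref{teoL2}: it observes instead that the only properties of $L_0$ actually used in the proof of Proposition \ref{teoL} (via Lemma \ref{propL}) are that $L_0$ is $C^\infty$-bounded, uniformly elliptic, symmetric on $C_c^\infty(M)$, and satisfies $-L_0|_{C_c^\infty(M)}\ge 0$. The sum-of-squares structure \eqref{Hoperator0} enters only through Lemma \ref{lemma33}, which establishes the last two properties; once those are known, Shubin's essential self-adjointness and kernel estimates do all the work. For $\tfrac12\Delta_{LB}$ these four properties are immediate from the definition (principal symbol $=g^{ij}$, $\Delta_{LB}=\nabla^{(g)*}\nabla^{(g)}$ is manifestly symmetric and nonpositive, $C^\infty$-boundedness follows from \eqref{estimateg}--\eqref{estimateg3}), so the paper simply says ``the proof is identical'' and is done in three lines.

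Your strategy of manufacturing a global finite tight frame $\{A_{c,j}\}$ via a colored cover and partition of unity, and then appealing to Theorem \ref{teoL2} as a black box, is a legitimate alternative; the identity $\sum_i A_i^aA_i^b=g^{ab}$ and the Hessian computation giving $A_0=-\tfrac12\sum_i\nabla^{(g)}_{A_i}A_i$ are correct, and uniform ellipticity with $C=1$ is indeed automatic. What it buys you is a clean reduction to an already-stated theorem; what it costs is the nontrivial construction and the verification of $C^\infty$-boundedness of the $A_{c,j}$ and of $A_0$ (which, as you note, hinges on uniform control of the local frames and transition maps between overlapping normal charts---true on bounded geometry, but not entirely for free). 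The paper sidesteps all of this by reopening the proof of Proposition \ref{teoL} rather than its statement.
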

\begin{proof} Since $(M,g)$ is of bounded geometry $-\Delta_{LB}$ is $C^\infty$-bounded, furthermore 
$\Delta_{LB}|_{C_c^\infty}$ is symmetric and $-\Delta_{LB}|_{C_c^\infty}\geq 0$. 
Finally $-\Delta_{LB}$ is  automatically uniformly elliptic since the matrix defining its pricipal symbol is nothing but the metric $g$.
Hence $\Delta_{LB}$ enjoys exactly the same properties as those of the operator $L_0$ we used in the proof of 
lemma \ref{propL} and
proposition  \ref{teoL}.
The proof for $\Delta_{LB}$ is therefore identical.
\end{proof}
\subsection{An approximation  in terms of random walk with piecewise geodesic paths }
\begin{lemma}\label{lemma-fin}Let $(M,g)$ be a smooth parallelizable Riemannian manifold of bounded geometry.

  For each $x\in M$, $t\geq 0$, $f\in C_0(M)$  set
 \begin{equation}\label{eqshift1-hm}
 (S(t)f)(x)=\frac{1}{2d}\sum_{k=1}^d\bigg(f\left(\gamma_{x,\sqrt{d }e_k}(\sqrt t)\right)+f\left(\gamma_{x,-\sqrt{d }e_k}(\sqrt t)\right)\bigg)
\end{equation}
where $\gamma _{x,v}$ denotes the geodesics starting at time 0 at the point $x\in M$ with initial velocity $v\in T_xM$. Further let $ L_0:C^\infty(m)\to C^\infty(M)$ be the differential operator 
$ L _0=\frac{1}{2}
\Delta_{LB}$ and let        $L_1:=L_0|_D$, where $D$ is given by \eqref{domainD}. \\
Then, with respect to the norm $\|f\|=\sup_{x\in M}|f(x)|$, the following holds:
\begin{itemize}
\item[(I)] for each $t\geq 0$ and $f\in C_0(M)$ we have $S(t)f\in C_0(M)$ and $\|S(t)f\|\leq  \|f\|$.

\item[(II)] for each $f\in D_k$, with $k\geq 3$,  we have  $\lim_{t\to+0}\|S(t)f-f-tL_1f\|/t=0$.

\item[(III)] if $t\to t_0$, $t_n\geq 0$ and $f\in C_0(M)$, then $\lim\limits_{t\to t_0}\|S(t)f- S(t_0)f\|=0$ for each $t_0\geq 0$.
 \end{itemize}

 \end{lemma}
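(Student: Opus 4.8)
The plan is to prove the three items by the same scheme already used in the proof of theorem \ref{teo3.1}, the only structural novelty being that the elementary curves are now geodesics $\gamma_{x,v}$ rather than integral curves of the $A_j$. Consequently the Taylor expansion of $\varphi$ along each curve produces the Riemannian \emph{Hessian} $\nabla^{(g)2}\varphi$ evaluated on the initial velocity, and the parallelizing identity $\sum_{k=1}^d e_k^i e_k^j = g^{ij}$ is what turns the resulting sum of Hessians into $\Delta_{LB}$. Note first that $(M,g)$ of bounded geometry is geodesically complete (Lemma \ref{lemmaC}), so each $\gamma_{x,\pm\sqrt d e_k}(\sqrt t)$ is defined for all $t\geq 0$ and the right-hand side of \eqref{eqshift1-hm} makes sense.

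For item (I), the contraction bound is immediate: the $2d$ coefficients $\tfrac1{2d}$ sum to $1$, so $(S(t)f)(x)$ is a convex combination of values of $f$ and $|(S(t)f)(x)|\leq\|f\|$. That $S(t)f\in C_0(M)$ I would establish exactly as in part 1 of the proof of theorem \ref{teo3.1}: continuity follows from the smooth dependence of geodesics on the initial point, and vanishing at infinity follows because each geodesic $\gamma_{x,\pm\sqrt d e_k}$ has constant $g$-speed $\|\sqrt d e_k\|_g=\sqrt d$, so $d_{(M,g)}(x,\gamma_{x,\pm\sqrt d e_k}(\sqrt t))\leq\sqrt{dt}$; given a compact $K_\varepsilon$ with $|f|<\varepsilon$ on $K_\varepsilon^c$, the closed $\sqrt{dt}$-enlargement of $K_\varepsilon$ is again compact by Hopf--Rinow, and for $x$ outside it every geodesic endpoint lands in $K_\varepsilon^c$, whence $|(S(t)f)(x)|<\varepsilon$. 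Item (III), the $\sup$-norm continuity of $t\mapsto S(t)f$, reduces as in part 2(b) of theorem \ref{teo3.1} to joint continuity of $(x,s)\mapsto f(\gamma_{x,\pm\sqrt d e_k}(s))$ plus uniform continuity of $f$ on compact enlargements; the only change is that the parameter entering the geodesic is $s=\sqrt t$, which is continuous in $t$ on $[0,\infty)$, so continuity at $t_0=0$ causes no trouble.

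The heart of the matter is item (II). Fix $\varphi\in D_k$ with $k\geq 3$ and $x\in M$, and set $v=\pm\sqrt d e_k$. Since $\gamma_{x,v}$ is a geodesic, $\nabla^{(g)}_{\dot\gamma}\dot\gamma=0$ forces the derivatives of $s\mapsto\varphi(\gamma_{x,v}(s))$ to be the iterated covariant derivatives contracted with the velocity: $\tfrac{d}{ds}\varphi=\langle d\varphi,\dot\gamma\rangle$, $\tfrac{d^2}{ds^2}\varphi=(\nabla^{(g)2}\varphi)(\dot\gamma,\dot\gamma)$ and $\tfrac{d^3}{ds^3}\varphi=(\nabla^{(g)3}\varphi)(\dot\gamma,\dot\gamma,\dot\gamma)$. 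Taylor-expanding to second order in $s=\sqrt t$, the two opposite directions $\pm\sqrt d e_k$ give identical quadratic terms (the Hessian is even in the velocity) while the linear terms cancel, so that
\begin{equation*}
\varphi(\gamma_{x,\sqrt d e_k}(\sqrt t))+\varphi(\gamma_{x,-\sqrt d e_k}(\sqrt t))=2\varphi(x)+t\,d\,(\nabla^{(g)2}\varphi)(e_k,e_k)+R_k(x,t).
\end{equation*}
Summing over $k$, dividing by $2d$, and invoking $\sum_{k=1}^d e_k^i e_k^j=g^{ij}$ collapses the sum of Hessians into the Laplace--Beltrami operator:
\begin{equation*}
(S(t)\varphi)(x)=\varphi(x)+\frac t2\sum_{i,j}g^{ij}\nabla^{(g)}_i\nabla^{(g)}_j\varphi(x)+\tilde R(x,t)=\varphi(x)+t\,(L_1\varphi)(x)+\tilde R(x,t),
\end{equation*}
since $L_1=\tfrac12\Delta_{LB}$.

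The main obstacle is to show the remainder $\tilde R(x,t)$ is $o(t)$ \emph{uniformly} in $x$, i.e. $\sup_{x\in M}|\tilde R(x,t)|=O(t^{3/2})$. Here the covariant formulation pays off and avoids any chart-confinement argument: by the Lagrange form of the remainder, $|R_k(x,t)|$ is controlled by $\tfrac{t^{3/2}}{6}\sup_{s}|(\nabla^{(g)3}\varphi)(\dot\gamma,\dot\gamma,\dot\gamma)|$, and since $\|\dot\gamma\|_g\equiv\sqrt d$ along the geodesic this is bounded by $\tfrac{t^{3/2}}{6}\,\|\,\|\nabla^{(g)3}\varphi\|_g\|_\infty\,d^{3/2}$. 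The supremum $\|\,\|\nabla^{(g)3}\varphi\|_g\|_\infty$ is finite because $\varphi\in C_b^k(M)$ with $k\geq 3$ (by the remark following the definition of $C^k$-boundedness, $C^k$-boundedness is equivalent to $\|\nabla^{(g)k}\varphi\|_\infty<\infty$). Hence $\sup_{x\in M,\,t\in[0,1]}|\tilde R(x,t)|/t^{3/2}<\infty$, which gives $\lim_{t\to+0}\|S(t)\varphi-\varphi-tL_1\varphi\|/t=0$. Together with items (I) and (III) this completes the verification of all three properties.
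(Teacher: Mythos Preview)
Your proof is correct and follows the same overall scheme as the paper: items (I) and (III) are reduced to the corresponding arguments in the proof of theorem \ref{teo3.1} (exactly as the paper does), and item (II) is handled by a second-order Taylor expansion of $\varphi\circ\gamma_{x,\pm\sqrt d e_k}$ in $s=\sqrt t$, the odd terms cancelling and the parallelizing identity $\sum_k e_k^i e_k^j=g^{ij}$ collapsing the Hessian sum to $\Delta_{LB}\varphi$.

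The one substantive difference is in how you control the cubic remainder. The paper works in local Riemannian coordinates: it writes out $\tfrac{d^3}{ds^3}\varphi(\gamma_{x,v}(s))$ explicitly using the geodesic equation $\ddot\gamma^k=-\Gamma^k_{ij}\dot\gamma^i\dot\gamma^j$, obtaining a polynomial expression in $\partial^\alpha\varphi$, $\Gamma^a_{bc}$, $\partial\Gamma^a_{bc}$ and $\dot\gamma^i$, and then invokes the bounded-geometry estimate \eqref{estimateg3} on the Christoffel symbols together with proposition \ref{teo2}(a) to bound the coordinate components $|\dot\gamma^i|$ by $k_1\|v\|_g$. You instead observe covariantly that along a geodesic $\tfrac{d^n}{ds^n}\varphi=(\nabla^{(g)n}\varphi)(\dot\gamma,\ldots,\dot\gamma)$, so the Lagrange remainder is bounded directly by $\tfrac{t^{3/2}}{6}\,d^{3/2}\,\|\,\|\nabla^{(g)3}\varphi\|_g\|_\infty$, the last factor being finite by the $C^k_b$-characterization in the remark following the definition of $C^k$-boundedness. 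Your route is shorter and avoids any chart-confinement or coordinate-to-covariant conversion; the paper's route makes the dependence on the bounded-geometry constants more explicit. Both are valid.
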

 
 \begin{proof}
 First of all we remark that under the stated assumptions the manifold is geodesically complete. Indeed, this follows from the  bounded geometry assumption and lemma \ref{lemmaC}.\\
 The proof of I) and III) is completely analogous to the proof of points 2., 3a. and 3b. of theorem \ref{teo3.1}. We can restrict ourselves to prove point II).\\
 for $t\downarrow 0$, we have
 $$f(\gamma _{x,v}(t))=f(x)+vf(x)t+\frac{1}{2}\frac{d^2}{ds^2}f(\gamma _{x,v}(s))_{|s=0}t^2+\frac{t^3}{3!}R(t,x),$$
 with $R(t,x)=\frac{d^3}{ds^3}f(\gamma _{x,v}(s))_{|s=u}$, $u\in [0,t]$.
 In particular,  by the geodesic equation 
 \begin{equation}\label{geod-eq}
     {\ddot\gamma}^k _{x,v}(t)=-\Gamma_{ij}^k\dot\gamma^i _{x,v}(t)\dot\gamma^j _{x,v}(t), 
 \end{equation}
  we obtain
 \begin{align*}
 \frac{d^2}{dt^2}f(\gamma _{x,v}(t))&=\sum_{i,j}\partial ^2_{ij}f(\gamma _{x,v}(t))\dot\gamma^i _{x,v}(t)\dot\gamma^j _{x,v}(t)+\sum_{i}\partial _{i}f(\gamma _{x,v}(t))\ddot\gamma^i _{x,v}(t),\\
 &=\sum_{i,j}\partial ^2_{ij}f(\gamma _{x,v}(t))\dot\gamma^i _{x,v}(t)\dot\gamma^j _{x,v}(t)-\sum_{i,j,k}\partial _{k}f(\gamma _{x,v}(t))\Gamma_{ij}^k\dot\gamma^i _{x,v}(t)\dot\gamma^j _{x,v}(t).\\
 \end{align*}
 Analogously, 
 \begin{equation}\label{trdDerivative}\frac{d^3}{dt^3}f(\gamma _{x,v}(t))=
\left((2\Gamma_{mj}^i\Gamma_{lk}^m-\partial _l\Gamma_{kj}^i)\partial _i f+\partial _{lkj}f+3\Gamma^i_{kl}\partial _{ij}f\right)
\dot\gamma^l _{x,v}(t)\dot\gamma^k _{x,v}(t)\dot\gamma^j _{x,v}(t),
 \end{equation}
 (where, for notational simplicity, we have used the convention on the sum over repeated indices). 
Hence, by using the identity $\sum_k e_k ^ie_k ^j=g(x)^{ij}$: 
 \begin{align*}
 S(t)f(x)&=
 f(x)+\frac{1}{2}\sum_{k=1}\left(\sum_{i,j}\partial ^2_{ij}f(x)e_k^ie_k ^j -\sum_{k,i,j}\partial _{k}f(x)\Gamma^k_{ij}e_k^ie_k ^j\right)t+t^3/2R(t,x)\\
 &=
 f(x)+L_1f(x)+t^3/2R(t,x),\\
 \end{align*}
 with 
 $$R(t,x)= \frac{1}{12d}\sum_{k=1}^d \left(\frac{d^3}{dt^3}f(\gamma _{x,\sqrt d e_k}(t))_{|t=u_k}+\frac{d^3}{dt^3}f(\gamma _{x,-\sqrt d e_k}(t))_{|t=u'_k}\right)$$
with $u_k, u'_k\in [0, \sqrt d]$, $k=1, \ldots , d$,  and $\frac{d^3}{dt^3}f(\gamma _{x,\sqrt d e_k}(t))$ is given by \eqref{trdDerivative}.\\
Let us take an $r_0\in (0, I_{(M,g))}]$ sufficiently small in such a way that the thesis of proposition \ref{teo2} holds and consider an atlas made of local normal Riemannian charts $(B^{(M,g)}_{r_0}(p), \exp^{-1}_{p,N})$. 
By the assumption that $(M,g)$ is of bounded geometry,  estimate \eqref{estimateg3}, the bound
$$|\dot \gamma_{x,v}^i(t) |\leq \sqrt{\sum_{i=1}^d |\dot  \gamma_{x,v}^i(t) |^2} \leq k_1 \|v\|_g$$
resulting from statement (a) of proposition \ref{teo2} and by the geodesic equation \eqref{geod-eq}, and the condition $f\in D_k$ with $k\geq 3$,
we obtain: $$\sup_{t\in [0,1], x \in M}|R(t,x)|<\infty,$$ which yields II.
 \end{proof}
 \begin{corollary}\label{corollaryheat}
 Under the assumptions of lemma \ref{lemma-fin} the closure in $C_0(M)$ of $L_1$ is the generator of a Feller semigroup $V$ and for any $f\in C_0(M)$ and $T>0$:
\begin{equation}\label{convergenceformulaheat}\lim_{n\to \infty}\sup _{t\in [0,T]}\|S(t/n)^nf-V(t)f\|=0\:.\end{equation}

 \end{corollary}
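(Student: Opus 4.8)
The plan is to read this corollary as the natural pairing of two results already in hand: the existence theorem for the heat semigroup stated at the opening of this section, which supplies the generator $L=\overline{L_1}$ and its Feller semigroup $V$, and Chernoff's theorem \ref{FormulaChernova}, whose four hypotheses are matched exactly by the three items (I)--(III) of lemma \ref{lemma-fin} together with the trivial identity $S(0)=I$. In this sense the corollary is essentially an assembly step: the genuine analytic content has already been carried out, so no new estimate should be needed.

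First I would dispose of the generator assertion. Since $(M,g)$ is of bounded geometry, $\Delta_{LB}$ is $C^\infty$-bounded, it is uniformly elliptic because its principal symbol is the inverse metric, and $\Delta_{LB}|_{C_c^\infty(M)}$ satisfies the positive maximum principle. These are precisely the ingredients exploited in proposition \ref{teoL} and theorem \ref{teoL2}, so that the existence theorem preceding this subsection applies verbatim to $L_0=\tfrac12\Delta_{LB}$ and yields that $L:=\overline{L_1}$, with $L_1=L_0|_{D_k}$, generates a Feller semigroup $V$ on $C_0(M)$, with both $L$ and $V$ independent of $k$.

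Next I would verify the four hypotheses of theorem \ref{FormulaChernova} for the map $S$ of \eqref{eqshift1-hm}. Hypothesis (1) follows from item (I) of lemma \ref{lemma-fin}, which gives $\|S(t)f\|\leq\|f\|$, hence $\|S(t)\|\leq e^{\omega t}$ with $\omega=0$. Hypothesis (2), strong continuity of $t\mapsto S(t)$, is item (III). Hypothesis (3), $S(0)=I$, is immediate from \eqref{eqshift1-hm} since $\gamma_{x,v}(0)=x$ forces $(S(0)f)(x)=f(x)$. Hypothesis (4) is supplied by item (II) with the choice $\mathcal{D}=D_k$, $k\geq 3$: it states exactly that $\lim_{t\to 0}\|S(t)f-f-tL_1f\|/t=0$ for every $f\in\mathcal{D}$. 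With all four hypotheses in force, theorem \ref{FormulaChernova} delivers \eqref{convergenceformulaheat}.

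The only point demanding a word of care, and the closest thing to an obstacle here, is the core requirement in hypothesis (4): Chernoff's theorem needs $\mathcal{D}\subset D(L)$ to be a core for $L$. I would discharge this by invoking the $k$-independence of the generator established in the existence theorem, which itself rests on lemma \ref{lemmaMN}: since $L=\overline{L_0|_{D_{k'}}}$ for every $k'$, in particular for $k'\geq 3$, the subspace $D_{k'}$ is by definition a core for $L$, so $\mathcal{D}=D_{k'}$ with $k'\geq 3$ is an admissible choice. Everything else is bookkeeping, the substantive work, namely the second-order Taylor expansion of $f$ along geodesics and the uniform control of the remainder via the bounded-geometry estimates \eqref{estimateg3} and proposition \ref{teo2}, having already been completed inside lemma \ref{lemma-fin}.
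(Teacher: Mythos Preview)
Your proposal is correct and follows exactly the intended route: the paper states the corollary without proof precisely because it is the immediate combination of the existence theorem for the heat semigroup (supplying the Feller generator $L=\overline{L_1}$ and a family of cores $D_k$) with lemma \ref{lemma-fin} (supplying the Chernoff hypotheses (1)--(4) via items (I)--(III) and $S(0)=I$). Your care in singling out the core requirement and resolving it through the $k$-independence of $L$ is the only nontrivial bookkeeping point, and you handle it correctly.
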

 The heat semigroup $V$  provides a solution of the heat equation on $M$ 
 \begin{equation}\label{CauchyProblemheat}
 \left\{ \begin{array}{l}
 \frac{\partial }{\partial t}u(t,x)=\frac{1}{2}\Delta_{LB}u(t,x)\\
 u(0,x)=u_0(x)
 \end{array}\right.\end{equation}
 in the sense that if $u_0\in D(L)$ then  $u(t):=V(t)u_0\in D(L)$ and $\frac{d}{dt}u(t)=Lu(t)$ in the strong sense.\par

 Analogously to the case of diffusion processes on manifolds, the approximation result stated in corollary \ref{corollaryheat} admits a probabilistic interpretation.
Indeed, we can still define a sequence of random walks on $M$ with steps given by geodesic arcs according to the following construction.
 
 For any $n\in \bN$,  let $X_n $ be a jump process defined as 
 $$ X_n(0)=x, \qquad X_n(t):=X_n(\lfloor nt\rfloor/n)=Y_n(\lfloor nt\rfloor),$$
 where $\{Y_n(m)\}_m$ is a Markov chain with transition probabilities
 \begin{multline} \bP(Y_n(m)\in I|Y_n(m-1)=y) =\frac{1}{2d}\sum_{k=1}^d\left(\delta_{\gamma_{y, \sqrt{d}e_k(y)}(\sqrt{1/n})}\left(I\right) + \delta_{\gamma_{y, -\sqrt{d}e_k(y)}(\sqrt{1/n})}\right)\left(I\right), \quad I\in \cB(M).\end{multline}
Analogously, let $(Z_n)$ the sequence of processes with continuous paths obtained by $X_n$ as geodesic interpolation, namely:
$$ Z_n(0)=x, \quad Z_n(m/n)=X_n(m/n), \quad Z_n(t)=\gamma _{X_n(m/n),X_n((m+1)/n)(t-m_n)}, \: t\in [m/n,(m+1)/n]$$
where $\gamma_{x,y}$ is the geodesic such that $\gamma_{x,y}(0)=x$ and  $\gamma_{x,y}(1/n)=y$.

Denoted with $X$ the diffusion process on $M$ associated to the semigroup generated by the operator $L=\bar L_1$ we have the following result
\begin{theorem}\label{teo43} Under the assumption of corollary \ref{corollaryheat},
for any $T>0$, $X_n$ converges weakly to $X$ in $D_M[0,T]$ and $Z_n$ converges weakly to $X$ in $C_M[0,T]$
\end{theorem}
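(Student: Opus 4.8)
The plan is to mirror exactly the two-step scheme already carried out for the diffusion attached to a general elliptic $L_0$, namely Theorem \ref{thconvD} for the jump chain and Theorem \ref{teoconvC} for its geodesic interpolation, now specialising the vector fields to the $g$-orthonormal frame directions $\{\pm\sqrt{d}\,e_k\}_{k=1,\ldots,d}$ and the operator to $L_0=\tfrac12\Delta_{LB}$. First I would record the semigroup identity that underlies the probabilistic reading of the Chernoff formula. By the definition of the jump chain $\{Y_n(m)\}_m$, its one-step transition operator acting on $f\in C_0(M)$ is
\[
\bE[f(Y_n(m))\mid Y_n(m-1)=y]
=\frac{1}{2d}\sum_{k=1}^d\Big(f\big(\gamma_{y,\sqrt{d}e_k}(\sqrt{1/n})\big)+f\big(\gamma_{y,-\sqrt{d}e_k}(\sqrt{1/n})\big)\Big)
=(S(1/n)f)(y),
\]
which is precisely the shift operator \eqref{eqshift1-hm} evaluated at $t=1/n$. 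Iterating and using $X_n(t)=Y_n(\lfloor nt\rfloor)$ with $X_n(0)=x$ yields $\bE^x[f(X_n(t))]=(S(1/n)^{\lfloor nt\rfloor}f)(x)$.

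Corollary \ref{corollaryheat}, rewritten in the form $V(t)f=\lim_{n\to\infty}S(1/n)^{\lfloor nt\rfloor}f$ (cf. \cite{EN1} Th.~5.2 Ch.~III and the reformulation \eqref{limit1}), then gives $\bE^x[f(X_n(t))]\to (V(t)f)(x)=\bE^x[f(X(t))]$ for every $f\in C_0(M)$ and $t\ge0$. Together with the Markov property this produces convergence of all finite-dimensional distributions, and the weak convergence of $X_n$ to $X$ in $D_M[0,T]$ follows from Theorem 2.6 Ch.~4 of \cite{EthKur} (equivalently Theorem 19.25 in \cite{Kal}), exactly as in the proof of Theorem \ref{thconvD}. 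This settles the first assertion.

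For the continuous interpolation $Z_n$ I would reproduce the tightness argument of Theorem \ref{teoconvC}. The essential geometric input is that a single step is a geodesic arc of $g$-length $\sqrt{d}\,\|e_k\|_g\,\sqrt{1/n}=\sqrt{d/n}\to0$, since the $e_k$ are $g$-orthonormal; for $n$ large enough $\sqrt{d/n}<I_{(M,g)}$, so the interpolating geodesic is the very arc traversed by the jump and $Z_n$ is a well-defined piecewise-geodesic path. Consequently, for $|s-t|<\delta$ with $1/n<\delta$, the quantity $d(\gamma_\omega(s),\gamma_\omega(t))$ is dominated by $3\max_m\{d(\gamma_\omega(m/n),\gamma_\omega(m'/n)):|m/n-m'/n|<\delta\}$, so that the law $\mu_n$ of $Z_n$ on $C_M[0,T]$ satisfies $\mu_n(\{w(\gamma,\delta)>\varepsilon\})\le \mu_{X_n}(\{w(\gamma,\delta)>\varepsilon/3\})$, the last measure being the law of $X_n$ on $D_M[0,T]$. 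Since $X_n$ converges weakly to $X$ in $D_M$ by the previous step and the diffusion $X$ has continuous trajectories, Lemma \ref{lemma4} gives $\lim_{\delta\downarrow0}\limsup_n \mu_n(\{\gamma:w(\gamma,\delta)>\varepsilon\})=0$ for every $\varepsilon>0$. As $Z_n(0)=x$ deterministically, the family $\{\mu_n\}$ is tight on $C_M[0,T]$ \cite{Bil}; since $Z_n$ and $X_n$ coincide at the times $m/n$ and $X$ is continuous, the law $\mu$ of $X$ is the unique limit point, which yields $Z_n\Rightarrow X$ in $C_M[0,T]$.

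Structurally the argument is thus a faithful repetition of the earlier results, and I do not expect a genuine obstacle: everything substantive has already been isolated in Corollary \ref{corollaryheat} (the Chernoff convergence, resting in turn on Lemma \ref{lemma-fin}) and in Lemma \ref{lemma4}. The only points demanding care are the uniform bound $\sqrt{d/n}$ on the step length and the well-posedness of the interpolating minimizing geodesics; both are guaranteed by the strictly positive injectivity radius of a manifold of bounded geometry, via Lemma \ref{lemmaC}.
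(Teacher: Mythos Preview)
Your proposal is correct and follows exactly the approach the paper indicates: the paper's proof simply states that the argument is ``completely similar to the proofs of theorems \ref{thconvD} and \ref{teoconvC},'' and you have faithfully spelled out that repetition. The additional care you take in noting the step length $\sqrt{d/n}$ and the well-posedness of the interpolating geodesics via the positive injectivity radius is a reasonable elaboration, not a departure.
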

The proof is completely similar to the proofs of theorems \ref{thconvD} and \ref{teoconvC}.
\subsection{An approximation  in terms of random walk with steps along integral curves of the parallelizing vector fields }
 In the case where  the parallelizing vector fields $e_1,\ldots,e_d$ of the manifold $(M,g)$ 
(simultaneously of bounded geometry and parallelizable) 
are $C^\infty$-bounded, we can 
view $\Delta_{LB}$ as a subcase of the operator $L_0$ discussed in Section \ref{sec3} and recast all the discussion therein using the paths constructed out of the integral 
lines of the fields $e_k$ instead of the geodesics. In fact, since $\sum_{i=1}^d e_i^a(x) e_i^b(x) = g^{ab}(x)$ and using the fact that 
$\nabla^{(g)}_k g^{ab} =0$, we can write
$$\Delta_{LB} = \sum_{a,b=1}^d g^{ij} \nabla_a^{(g)}\nabla_b^{(g)} = \sum_{a,b=1}^d \nabla_a^{(g)} g^{ab} \nabla_b^{(g)} 
=  \sum_{a,b=1}^d \nabla_a^{(g)} \sum_{i=1}^d e^a_i e^a_i \nabla_b^{(g)} =
 \sum_{i=1}^d  \sum_{a,b=1}^d \nabla_a^{(g)} e^a_i e^a_i \nabla_b^{(g)}  $$
$$= \sum_{i=1}^d  \sum_{a,b=1}^d e^a_i \nabla_a^{(g)}  e^a_i \nabla_b^{(g)} + \sum_{i=1}^d (\nabla^{(g)}\cdot e_i) e_i  $$
In other words $\Delta_{LB}$ is the operator $L_0$ in (\ref{Hoperator0}) generated by the vector fierlds $e_1,\ldots, e_d$, with a suitable choice for 
$e_0$ since,
 if $f \in C^\infty(M)$,
$$ (\Delta_{LB} f)(x) = \sum_{i=1}^d e_i(e_i f)(x)+ (e_0 f)(x) \quad 
\mbox{where}\quad e_0 := \sum_{i=1}^d (\nabla^{(g)}\cdot e_i) e_i\:.  $$

In this case theorem \ref{teo-conv-lastZn} holds yielding the Brownian motion on $M$, i.e. the diffusion process associated with the Laplace-Beltrami operator $\Delta_{LB}$, as the weak limit of a sequence of random walks $\{\tilde Z_n\}$ of the form \eqref{deftildeZ-n}, with steps constructed out of integral curve of the vector fields $\{e_k\}_{k=1,\ldots, d}$. This result can be rephrased in following form.
\begin{theorem}\label{teo44}
Let $(M,g)$ be a smooth parallelizable manifold of bounded geometry admitting a set of parallelizing vector fields $e_1,\ldots,e_d$ which are $C^\infty$-bounded. Then the Wiener measure $\mu$ on $(C([0,t], M), \cB(C([0,t], M))$, i.e. the law of the diffusion process associated to the Laplace Beltrami operator $\Delta _{LB}$ is the weak limit of the sequence of probability measures $\tilde\mu _n$ on $(C([0,t], M), \cB(C([0,t], M))$ induced by the random walks $\tilde Z_n$ defined by \eqref{deftildeZ-n} with $A_k=e_k$.
\end{theorem}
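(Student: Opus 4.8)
The plan is to recognize Theorem \ref{teo44} as a direct specialization of Theorem \ref{teo-conv-lastZn} to the case where the elliptic generator is one half of the Laplace--Beltrami operator and the underlying vector fields are the parallelizing frame $e_1,\ldots,e_d$. Concretely, I would start from the identity derived immediately before the statement, namely that for $f\in C^\infty(M)$
\begin{equation*}
(\Delta_{LB}f)(x)=\sum_{i=1}^d e_i(e_i f)(x)+(e_0 f)(x), \qquad e_0:=\sum_{i=1}^d(\nabla^{(g)}\cdot e_i)e_i,
\end{equation*}
so that $\frac{1}{2}\Delta_{LB}$ is exactly the operator $L_0$ of the form \eqref{Hoperator0} with $r=d$, $A_i=e_i$ for $i=1,\ldots,d$, and $A_0=\frac{1}{2}e_0=\frac{1}{2}\sum_{i=1}^d(\nabla^{(g)}\cdot e_i)e_i$. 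I would emphasize that this $A_0$ is precisely of the special form \eqref{linkA}, so the situation in fact falls under proposition \ref{teoL} (with $A_i=e_i$), and a fortiori under the hypotheses of Theorem \ref{teo3.1} and hence of Theorem \ref{teo-conv-lastZn}.

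Next I would verify that all the hypotheses of Theorem \ref{teo3.1} hold for this choice. Bounded geometry of $(M,g)$ and $C^\infty$-boundedness of the $e_i$ are assumed outright; the only nontrivial point is that $A_0=\frac{1}{2}e_0$ is itself $C^\infty$-bounded. This follows because each covariant divergence $\nabla^{(g)}\cdot e_i=\sum_j(\partial_j e_i^j+e_i^j\partial_j\log\sqrt{|g|})$ is $C^\infty$-bounded: the components $e_i^j$ and their derivatives are uniformly controlled by the $C^\infty$-boundedness of $e_i$, while the derivatives of $\log\sqrt{|g|}$ are uniformly bounded in every normal chart thanks to the bounded-geometry estimates \eqref{estimateg} and \eqref{estimateg3}. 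Uniform ellipticity is automatic and in fact trivial: since the frame is $g$-orthonormal one has $a^{ab}(x)=\sum_{i=1}^d e_i^a(x)e_i^b(x)=g^{ab}(x)$, so $\sum_{a,b}a^{ab}\xi_a\xi_b=\sum_{a,b}g^{ab}\xi_a\xi_b$ and the defining inequality holds with constant $C=1$. Finally $r=d\geq d$ is satisfied with equality.

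With these checks in place I would invoke Theorem \ref{teo-conv-lastZn} verbatim: the sequence of random walks $\tilde Z_n$ defined in \eqref{deftildeZ-n} with $A_k=e_k$, whose single steps run along the integral curves of the parallelizing fields, induces probability measures $\tilde\mu_n$ on $C([0,t],M)$ that converge weakly to the law $\mu$ of the diffusion $X$ associated with $L=\overline{\frac{1}{2}\Delta_{LB}|_{D_k}}$. The final step is purely one of identification: by definition the diffusion generated by $\frac{1}{2}\Delta_{LB}$ is the Brownian motion on $(M,g)$ and its law is the Wiener measure, so $\mu$ is exactly the object named in the statement. I expect the main (and essentially only) obstacle to be the $C^\infty$-boundedness of the drift $A_0$, i.e.\ of the divergence term $e_0$; everything else is either assumed, automatic (the ellipticity, via $\sum_i e_i^a e_i^b=g^{ab}$), or already supplied by Theorem \ref{teo-conv-lastZn} and proposition \ref{teo4.5}, so no new analytic estimate beyond those of Section \ref{sec3} is needed.
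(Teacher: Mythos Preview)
Your proposal is correct and follows exactly the route the paper takes: the paragraph preceding the statement already records the identity $\Delta_{LB}=\sum_i e_i e_i + e_0$ with $e_0=\sum_i(\nabla^{(g)}\cdot e_i)e_i$, and then simply says ``In this case theorem \ref{teo-conv-lastZn} holds,'' presenting Theorem \ref{teo44} as a rephrasing with no further proof. Your write-up is in fact more thorough than the paper's, since you explicitly check the $C^\infty$-boundedness of $A_0$ and the uniform ellipticity, points the paper leaves implicit.
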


\section{Acknowledgments}
We are grateful to Sergio Albeverio, Fernanda Florido-Calvo,  Christian G\'erard,  Simone Murro, and Andrea Pugliese for useful  discussions,
 suggestions, and for having pointed out relevant references to us. The financial support of CIRM  (Centro Internazionale per la Ricerca Matematica) - FBK (Fondazione Bruno Kessler), is gratefully acknowledged. Ivan Remizov's work is partially supported by Laboratory of Dynamical
Systems and Applications NRU HSE, of the Ministry of science and higher education of
the RF grant ag. No 075-15-2019-1931.
O.G. Smolyanov acknowledges the financial support of the grant "Fundamental problems of mechanics and mathematics" of Lomonosov Moscow State University and also the financial support
of Moscow Institute of Physics and Technology, within the state program to support the leading Russian Universities.
 
 \section{Proof of some technical propositions}
 
\noindent{\bf Proof of Lemma \ref{lemmaC}}. Suppose there is a maximal geodesic $\gamma : I\ni t  \to \gamma(t) \in M$,
where $t$ is the a length parameter along $\gamma$ used as its affine parameter, 
such that $\sup I = \omega <+\infty$ (the case $-\infty < \inf I$ is analogous). Let $\{t_n\}_{n\in \mathbb{N}} \subset I$ be an increasing  sequence such that $t_n \to \omega$ as $n\to +\infty$.
Consider an element $t_n$.
 If there were an open
ball $B_n\subset T_{\gamma(t_n)}M$ centered at the origin and of  radius $r > \omega- t_n$ where the exponential map $\exp_{\gamma(t_n)} T_{\gamma(t_n)}B_n 
\to M$ 
is a diffeomorphism onto its image, then
$B_n$ would include in particular the tangent vector of $\gamma$ at $\gamma(t_n)$ and also a longer parallel vector. As a consequence 
 $\gamma$ could  be extended  to a longer geodesics. Since this is not possible, we conclude that  $I_{(M,g)}(\gamma(t_n)) < \omega- t_n$. In turn,  it would imply $0\leq I_{(M,g)} \leq \inf_{n \in \mathbb{N}}I_{(M,g)}(\gamma(t_n))=0$, whereas $I_{(M,g)}>0$ by hypothesis. Hence all maximal geodesics must be   complete. The last statement immediately arises from 
Hopf-Rinow's theorem.
$\hfill \Box$\\

 \noindent{\bf Proof of Lemma \ref{CPVF}}.
Let $\gamma:(a,b)\to M$ be a maximal solution of \eqref{CPVF} and let us assume {\em ab absurdum} that $b <+\infty$.  Consider a $t_0\in (a,b)$ and let $f:(t_0,b)\to \bR$ be the continuous function defined as $$f(t):=d(\gamma(t),\gamma(t_0))\, ,$$
where $d:= d_{(M,g)}$ is the above defined distance induced by the Riemannian metric. Since we have assumed that $ b<\infty$, the function $f$ cannot be bounded on $[t_0,b)$. Indeed, $f$ were bounded, then there would exist an $R>0$ such that $\gamma (t)\in B_R(\gamma(t_0))$ for all $t\in [t_0, b)$, where $B_R(\gamma(t_0))$ denotes the closed ball with radius $R$ and center $\gamma(t_0)$. On the other hand, under the stated assumptions on $M$, 
Hopf-Rinow theorem assures the compactness of the closed metric balls. By a classical result (see, e.g., lemma 56, Ch. 1 in \cite{ONe}), if there exists a compact set $K$ such that  the maximal solution $\gamma:[t_0,b)\to M$
satisfies the condition $\gamma ([t_0,b))\subset K$, then $b=+\infty$.
Hence, since $f$ cannot be bounded,  there exists a monotonically increasing sequence $t_n\to b$ such that $d(\gamma(t_n),\gamma (t_0))\to \infty $. Let 
$s\colon [t_0,b)\to \bR$ be the curvilinear abscissa along the curve $\gamma$, namely:
\begin{equation}\label{st} s(t)=\int_{t_0}^t\sqrt{g(A(\gamma (u)), A(\gamma(u)))}du. \end{equation}
Clearly, for any $n\geq 1$, the following holds
$$\frac{d(\gamma(t_n), \gamma (t_0))}{t_n-t_0}\leq \frac{s(t_n)-s(t_0)}{t_n-t_0}.$$
the latter inequality,   the boundedness of the sequence $\{t_n-t_0\}$ and the fact that $\{d(\gamma(t_n), \gamma (t_0))\}$ is unbounded and strictly positive gives $$\lim_{n\to\infty}\frac{s(t_n)-s(t_0)}{t_n-t_0}= +\infty$$
On the other hand, by Lagrange's theorem applied to the (known to be differentiable) function $s: [t_0, b )\to \bR$ defined in \eqref{st}, for any $n$ there exist a $u_n\in (t_0, t_n)$ such that $$\sqrt{g(A(\gamma (u_n)), A(\gamma(u_n)))}=\frac{d s}{dt}(u_n)=\frac{s(t_n)-s(t_0)}{t_n-t_0}.$$ 
The left hand side of the above equality is bounded by the assumptions on $A$, while the right hand side is unbounded by the discussion above and we have obtained a contradiction. $\hfill \Box$\\

\noindent {\bf Proof of Proposition \ref{corollaryD}}.
Let us start with the following lemma.

\begin{lemma}\label{lemmaDEN} Let $M$ be a smooth manifold  and $f \in C_0(M)$. For every $\varepsilon >0$ there is $\psi \in C^\infty(M) \cap C_0(M)$ such that $||f- \psi||_\infty <\varepsilon$.
\end{lemma}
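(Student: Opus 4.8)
The plan is to prove the lemma in two stages: first replace $f$ by a continuous function of \emph{compact support} that is uniformly close to it, and then smooth the latter by mollification in finitely many coordinate charts. Reducing to the compactly supported case is what makes the vanishing-at-infinity requirement on $\psi$ automatic and lets me work with a \emph{finite} partition of unity, thereby sidestepping all questions of convergence and of uniform control of infinite sums.

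For the first stage I would use that $f \in C_0(M)$: given $\varepsilon > 0$ there is a compact set $K \subset M$ with $|f(x)| < \varepsilon/3$ for $x \in K^c$. Since $M$ is a smooth, Hausdorff, second countable (hence paracompact) manifold, it admits smooth bump functions and smooth partitions of unity \cite{Lee}, so there is $\chi \in C_c^\infty(M)$ with $0 \le \chi \le 1$ and $\chi \equiv 1$ on $K$. Setting $f_1 := \chi f$, which is continuous with $\mathrm{supp}\, f_1 \subset \mathrm{supp}\,\chi =: K'$ compact, one checks that $\|f - f_1\|_\infty = \|(1-\chi) f\|_\infty \le \sup_{K^c}|f| \le \varepsilon/3$, because $1-\chi$ vanishes on $K$ while $|1-\chi|\le 1$ everywhere.

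For the second stage I would cover the compact set $K'$ by finitely many coordinate charts $(U_1,\phi_1), \ldots, (U_m,\phi_m)$ and take smooth functions $\chi_1, \ldots, \chi_m \in C_c^\infty(M)$ with $\mathrm{supp}\,\chi_j \subset U_j$, $\chi_j \ge 0$, and $\sum_{j=1}^m \chi_j \equiv 1$ on a neighborhood of $K'$, so that $f_1 = \sum_{j=1}^m \chi_j f_1$. Each summand $\chi_j f_1$ is continuous with compact support inside $U_j$; transporting it to $\bR^d$ via $\phi_j$ and convolving with a standard mollifier $\rho_\delta$ produces, for $\delta$ small enough that the support stays inside $\phi_j(U_j)$, a function whose pull-back $g_j \in C_c^\infty(U_j) \subset C_c^\infty(M)$ satisfies $\|g_j - \chi_j f_1\|_\infty < \varepsilon/(3m)$. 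Here I would invoke the classical fact that the mollifications of a compactly supported continuous function on $\bR^d$ converge to it uniformly (by uniform continuity), together with the observation that the sup-norm is unchanged under pull-back by the chart diffeomorphism. Finally $\psi := \sum_{j=1}^m g_j$ is a \emph{finite} sum of smooth, compactly supported functions, hence $\psi \in C_c^\infty(M) \subset C^\infty(M) \cap C_0(M)$, and $\|\psi - f\|_\infty \le \|\psi - f_1\|_\infty + \|f_1 - f\|_\infty \le \sum_{j=1}^m \|g_j - \chi_j f_1\|_\infty + \varepsilon/3 \le \varepsilon/3 + \varepsilon/3 < \varepsilon$.

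The only genuinely technical point is the chart-wise mollification estimate of the second stage, but this is entirely classical $\bR^d$ analysis; the conceptual work is done by the first-stage reduction to compact support, which both supplies the $C_0$ property of $\psi$ for free (every compactly supported function lies in $C_0(M)$) and collapses the partition of unity to a finite one. I therefore expect no substantive obstacle beyond routine bookkeeping. As a byproduct the argument yields $\psi \in C_c^\infty(M)$, which is in fact the stronger statement needed for Proposition \ref{corollaryD}.
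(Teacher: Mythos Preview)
Your proof is correct and takes a genuinely different route from the paper's. The paper works directly with an \emph{infinite} locally finite partition of unity $\{\chi_n\}_{n\in\bN}$, approximates $f|_{V_n}$ in each chart by a polynomial $p^{(n,\varepsilon)}$ via Stone--Weierstrass (carefully arranged so that $0\le p^{(n,\varepsilon)}\le f|_{V_n}$ when $f\ge 0$), and then sets $\psi=\sum_n \chi_n p^{(n,\varepsilon/2^{n+1})}$; the pointwise inequality $0\le\psi\le f$ is what guarantees $\psi\in C_0(M)$, and the geometric weights $\varepsilon/2^{n+1}$ control the infinite sum of errors. Your two-stage argument --- first cut $f$ down to a compactly supported continuous function, then mollify through a \emph{finite} partition of unity --- is more economical: it replaces Stone--Weierstrass by standard mollification, eliminates the infinite-sum bookkeeping entirely, and delivers $\psi\in C_c^\infty(M)$ directly, so that Proposition~\ref{corollaryD} follows in one stroke rather than via the paper's two-step reduction (Lemma~\ref{lemmaDEN} followed by a separate smooth Urysohn argument to pass from $C^\infty(M)\cap C_0(M)$ to $C_c^\infty(M)$). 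The paper's approach does yield the extra information $0\le\psi\le f$ for $f\ge 0$, but that is not used anywhere.
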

\begin{proof} (There are different  ways to prove this density result and this is just a possibility).
It is sufficient to prove the thesis for
real functions and, in turn, for
$f\geq 0$. The general statement follows by decomposing $f = f_+ -f_-$ where $0\leq f_\pm = \frac{1}{2}(|f|\pm f) \in C_0(M)$. 
Let us therefore prove the thesis for $0\leq f \in C_0(M)$.\\
If $p\in M$, there is a local chart $(U,\psi)$ such that $p\in U$. We can always restrict $U$ to a smaller open neightborhood $V$ of $p$, such that 
$\overline{V} \subset U$ is a compact set. Since there is such a local chart for every $p\in M$ and the topology of $M$ is $2$nd countable, we can extract a  subcovering of $M$ made of charts $\{V_j, \psi_j\}_{j\in J}$ where $J$ is finite or countably infinite. Using paracompactness property of $M$, we can refine $\{V_j, \psi_j\}_{j\in J}$ to a locally finite  covering (equipped with corresponding coordinate maps $\psi_j$,  the restrictions of the original ones) still indicated with the same 
symbol $\{V_j, \psi_j\}_{j\in J}$. Finally, we can define a partition of the unit $\{\chi_j\}_{j\in J}$ subordered to the covering $\{V_j\}_{j\in J}$. Therefore
\begin{itemize}
\item[(i)] $\chi_j \in C_c^\infty(M)$, 
\item[(ii)] $0\leq \chi_j \leq 1$, 
\item[(iii)] $supp(\chi_j) \subset V_j$,

\item[(iv)] $\sum_{j\in J}\chi_j(x)=1$ where, due to locally finiteness property,  for every $x\in M$ there is an open set containing $x$ whose intesection with the $V_j$ is not empty only for a finite number of indices $j\in J$, hence the sum is always finite.
\end{itemize}
To go on we assume that $J=\bN$ (the case of $J$ finite is simpler).  If $f \in C_0(M)$, the function $f|_{V_n}\geq 0$ represented  in coordinates through the map $\psi_n$ turns out to be  the restriction of a  contiunuous function 
defined on a compact $\psi_n(\overline{V_n})\subset \bR^n$. Using Stone-Weierstrass theorem we conclude that, for every $\varepsilon>0$, there is a smooth function $p^{(n,\varepsilon)}$ defined on $V$ that, in coordinates is the restriction to $V$ of a polynomial defined in  the compact set $\psi_n(\overline{V_n})\subset \bR^n$, such that with obvious notation
\begin{equation}
||f|_{V_n} - p^{(n,\varepsilon)}||^{(V_n)}_\infty < \varepsilon\label{one}.
\end{equation}
It is always possible to choose 
\begin{equation}
0\leq p^{(n,\varepsilon)} \leq f|_{V_n}\label{two}.
\end{equation}
In fact, for $\mu>0$  define 
$g_\mu := f + \mu$. Using the same argument as above, there is a smooth function $ q^{(n,\mu)}$ (in coordinates the restriction to the compact 
$\psi_n(\overline{V_n})$ of a polynomial) such that the inequality holds $|| q^{(n,\mu)}  - g_\mu||_\infty < \mu/3$, that is if $x\in V_n$
$$-\mu/3 \leq    q^{(n,\mu)}(x)  - f(x) - \mu < \mu/3$$
which implies
$$2\mu/3 <  q^{(n,\mu)}(x)  - f(x) < 4\mu/3$$
so that 
$$0 < f(x) + 2\mu/3 < q^{(n,\mu)}(x) < f(x) + 4\mu/3$$
Defining $\varepsilon := 4\mu/3$ and $p^{(n,\varepsilon)}:= q^{(n,\mu)}$ we have that (\ref{one}) and (\ref{two}) are valid simultaneously.
In view of the definition of the functions  $\chi_n$, (\ref{one}) and (\ref{two}) immediately imply
\begin{equation}
||f \cdot \chi_n - p^{(n,\varepsilon)}\chi_n||_\infty < \varepsilon\label{one2}.
\end{equation}
and 
\begin{equation}
0\leq p^{(n,\varepsilon)} \cdot \chi_n \leq f\cdot \chi_n \label{two2}.
\end{equation}
Notice that the functions $ p^{(n,\varepsilon)} \cdot \chi_n$ and $f\cdot \chi_n $ are everywhere well defined on $M$ and belong to $C^\infty_c(M)$.
To conclude the proof, for $\varepsilon>0$ define
$$\psi := \sum_{n\in \bN} \chi_n \cdot  p^{(n,\varepsilon/2^{n+1})} $$
This function  is well-defined belongs to $C^\infty(M)$. Furthermore 
$$0\leq \psi = \sum_{n\in \bN} \chi_n \cdot  p^{(n,\varepsilon/2^{n+1})}  \leq  \sum_{n\in \bN} \chi_n \cdot  f = f $$
so that $\psi \in C^\infty(M) \cap C_0(M)$. Finally
$$||f-\psi||_\infty = \left|\left| \sum_{n\in \bN} \chi_n \cdot  p^{(n,\varepsilon/2^{n+1})}- \chi_n \cdot f \right|\right|_\infty \leq   \sum_{n\in \bN} || \chi_n \cdot  p^{(n,\varepsilon/2^{n+1})}- \chi_n \cdot f ||_\infty \leq \sum_{n\in \bN} \varepsilon 2^{n+1} = \varepsilon\:.$$
\end{proof}
In view of the lemma, in turn, it is sufficient to prove that $C_c^\infty(M)$ is dense in $C_0(M) \cap C^\infty(M)$.  If   $f\in C_0(M) \cap C^\infty(M)$ and $\varepsilon>0$, 
then there is a compact $K \subset M$ such that $|f(x)| <\varepsilon$ if $x \not \in K$. Let $A\supset K$ be an open set  whose closure is compact 
(It can be constructed as follows. Every $p\in K$ admits an open neighborhood which is relatively compact -- just work in a coordinate patch-- due compactness ,  $K$ is therefore covered by a finite class of those relatively-compact open sets. The union of those sets is the wanted $A$.) Define $B := M \setminus A$.
Since $K$ and $B$ are disjoint closed sets ($K$ is closed because $M$ is Hausdorff by hypothesis),   from the {\em smooth Urysohn lemma}, there exists $\chi \in C^\infty(M)$ such that $|\chi(x)| \leq 1$ for $x\in M$ and
$K\subset\chi^{-1}(\{1\})$, $B\subset \chi^{-1}(\{0\})$. Furthermore, from the construction, we see that  $supp(\chi) \subset A \cup \partial A= \overline{A}$. We conclude that 
$\chi\in C_c^\infty(M)$. The function $\psi := \chi \cdot f$ belongs to $C_c^\infty(M)$ as well and furthermore 
$$||f - \psi||_\infty \leq  ||f|_K -\psi|_K||^{(K)}_\infty +   ||f|_{M\setminus K} -\psi|_{M\setminus K}||^{(M\setminus K)}_\infty$$
$$ = ||f|_K -f|_K||^{(K)}||_\infty + ||f\cdot (1-\chi)|_{M\setminus K}||^{(M\setminus K)}_\infty \leq 0 +   ||f|_{M\setminus K}||^{(M\setminus K)}_\infty = \varepsilon\:.$$
The proof is over since we have proved that if $f\in C_0(M) \cap C^\infty(M)$ and $\varepsilon>0$, then there exists $\psi \in C_c^\infty(M)$ such that $||f-\psi||_\infty < \varepsilon$.
 $\hfill\Box$\\

\noindent{\bf Proof of Lemma \ref{propL}}.
Noticing that $C_c^\infty(M)$ is dense in $L^2(M, \mu_g)$, let us first establish that $L_0|_{C_c^\infty(M)}$  is symmetric  in $L^2(M, \mu_g)$ -- where from now on $\mu_g$ is the volume form (a positive Borel measure) associated to the metric $g$.   Furthermore we also prove that   $-L_0|_{C_c^\infty(M)} \geq 0$.
\begin{lemma}\label{lemma33} With the hypotheses of Lemma \ref{propL}, (\ref{linkA}) in particular,
 $L_0|_{C_c^\infty(M)}$ is symmetric
and 
$-\langle h, L_0 h \rangle \geq 0$
if $h\in C_c^\infty(M)$.
\end{lemma}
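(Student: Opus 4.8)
The plan is to exhibit $L_0|_{C_c^\infty(M)}$, with the particular zero-order/first-order combination fixed by (\ref{linkA}), as the manifestly symmetric and non-positive operator $-\tfrac12\sum_{k=1}^r A_k^\ast A_k$, where $A_k^\ast$ is the formal adjoint in $L^2(M,\mu_g)$ of the first-order operator $A_k$. The starting point is the integration-by-parts identity on $(M,g)$: for a single real smooth vector field $A$ and any $\phi \in C_c^\infty(M)$,
\[
\int_M (A\phi)\, d\mu_g = -\int_M \phi\,(\nabla^{(g)}\cdot A)\, d\mu_g,
\]
which is just the divergence theorem applied to the compactly supported field $\phi A$ relative to the Riemannian volume measure $\mu_g$ (no boundary term arises since $\phi$ has compact support), using the coordinate expression for $\nabla^{(g)}\cdot A$ recalled before the lemma.

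Applying this with $\phi = \overline{u}v$ for $u,v \in C_c^\infty(M)$, and exploiting that $A$ has real coefficients so that $A\overline{u} = \overline{Au}$, a rearrangement yields the formal-adjoint relation on $C_c^\infty(M)$,
\[
\langle u, A v\rangle = \langle (-A - (\nabla^{(g)}\cdot A))\,u,\, v\rangle, \qquad \text{i.e.}\quad A^\ast = -A - (\nabla^{(g)}\cdot A),
\]
where $\nabla^{(g)}\cdot A$ acts as a multiplication operator. The key algebraic observation is then that, writing $\beta_k := \nabla^{(g)}\cdot A_k$, one has
\[
-A_k^\ast A_k = (A_k + \beta_k)A_k = A_kA_k + \beta_k A_k .
\]
Summing over $k$ and halving reproduces $L_0$ precisely when $A_0 = \tfrac12\sum_{k=1}^r \beta_k A_k$, which is exactly the prescription (\ref{linkA}); hence on $C_c^\infty(M)$ we obtain the factorization $L_0 = -\tfrac12\sum_{k=1}^r A_k^\ast A_k$.

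With this factorization both claims are immediate. Using the adjoint relation twice, for $u,v \in C_c^\infty(M)$ one computes
\[
\langle u, L_0 v\rangle = -\tfrac12\sum_{k=1}^r \langle A_k u, A_k v\rangle = \langle L_0 u, v\rangle,
\]
which is the symmetry of $L_0|_{C_c^\infty(M)}$, while setting $u=v=h$ gives $-\langle h, L_0 h\rangle = \tfrac12\sum_{k=1}^r \|A_k h\|^2 \ge 0$. The only point that genuinely requires care — and the step I expect to be the main, though modest, obstacle — is the rigorous justification of the divergence identity and the adjoint computation: one must check that $\overline{u}v A_k$ is indeed smooth and compactly supported so that the divergence theorem applies with vanishing boundary term, and that the coordinate formula for $\nabla^{(g)}\cdot A_k$ is the correct dual of $A_k$ with respect to $\mu_g$. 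Once this is in place, the remainder is the purely algebraic rearrangement displayed above.
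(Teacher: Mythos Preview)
Your proof is correct and follows essentially the same route as the paper: both arguments rest on the integration-by-parts identity $\langle u, A v\rangle = -\langle Au, v\rangle - \langle u, (\nabla^{(g)}\cdot A)v\rangle$ and the specific form (\ref{linkA}) of $A_0$. Your packaging via the factorization $L_0 = -\tfrac12\sum_k A_k^\ast A_k$ is a bit cleaner than the paper's direct term-by-term expansion, but the underlying computation is identical and both arrive at $-\langle h, L_0 h\rangle = \tfrac12\sum_k \|A_k h\|^2$.
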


\begin{proof}If $A$ is a vector field viewed as differential operator, taking advantage of a partition of the unit,  exploiting $Af = \nabla^{(g)}_Af = \sum_k A^j\nabla^{(g)}_j f$ and the fact that $\nabla^{(g)}_j|_{C_c^\infty(M)}$ is symmetric in $L^2(M, \mu_g)$, one immediately sees that, if $h,h' \in C_c^\infty(M)$,
$$\langle h', Ah \rangle = -\langle Ah', h \rangle -   \langle h',  (\nabla^{(g)} \cdot A) h \rangle \:,$$
where $ \nabla^{(g)} \cdot A$ acts as multiplicative operator.
Exploiting the fact that $C_c^\infty(M)$ is invariant under the action of $A_0$ and $A_i$ we find
$$\langle L_0 h', h\rangle =\langle h',   L_0h \rangle - 2\langle h', A_0h \rangle  + \sum_{i=1}^r\langle h', (\nabla^{(g)} \cdot A_i) A_ih \rangle
- \langle h', \nabla^{(g)} \cdot A_0   h\rangle $$
$$ +  \frac{1}{2}\sum_{i=1}^r \langle h', \left(\nabla^{(g)} \cdot (\nabla^{(g)} \cdot A_i) A_i\right) h\rangle  = \langle h', L_0 h \rangle$$
where we have used (\ref{linkA}) in the last passage. We have proved that $L_0|_{C^\infty_c(M)}$ is symmetric because $C^\infty_c(M)$ is dense and $\langle L_0 h', h\rangle =\langle  h', L_0h\rangle $ for all
$h,h' \in C_c^\infty(M)$.\\
 Regarding positivity, we have for $h\in C_0^\infty(M)$,
$$-\langle h, L_0h\rangle = -\frac{1}{2}\sum_{i=1}^r\int_M \overline{h}A_iA_i hd\mu_g - \int_M \overline{h} A_0h d\mu_g$$ 
$$=
\frac{1}{2}\sum_{i=1}^r\langle A_ih, A_ih\rangle+ \frac{1}{2}\sum_{i=1}^r\int_M (\overline{h}  \nabla^{(g)}\cdot A_i) A_i h d\mu_g
 - \int_M \overline{h} A_0h d\mu_g =  \frac{1}{2}\sum_{i=1}^r\langle A_ih, A_ih\rangle \geq 0$$
where we have used again (\ref{linkA}) in the last passage \end{proof}

Let us pass to prove that there is a solution $f\in C^\infty(M)$ of (\ref{EQR2})  when $h\in C_c^\infty(M)$.
Since $L_0|_{C_c^\infty(M)}$ is symmetric (``formally selfadjoint" in Shubin's terminology),  uniformly elliptic, and $C^\infty$-bounded, 
 Corollary 4.2 in \cite{shubin} implies that 
 $L_0|_{C_c^\infty(M)}$ is essentially selfadjoint in $L^2(M, \mu_g)$ and we will denote 
by $L'$ the unique selfadjoint extension of   $L_0|_{C_c^\infty(M)}$ (i.e., the closure of the latter in the Hilbert space $L^2(M, \mu_g)$).  Let us focus  on the equation for the unknown $f\in D(L')$
\begin{equation} L' f - \lambda f = h\:,\label{EQR}\end{equation}
when $h \in C_c^\infty(M)\subset L^2(M, \mu_g)$ and $\lambda >0$ are given. 
By  multiplying both sides with a test function $h'\in C_0^\infty(M)$ and integrating the result, using the fact that $L'$
is a selfadjoint extension of $L_0|_{C_c^\infty(M)}$, we find that an $f$ satisfying  (\ref{EQR}), if any,  must also satisfy (\ref{EQR2})
(where $L_0$ appears instead of $L'$!)
in {\em distributional sense},  since  $f \in D(L') \subset L^2(M, \mu_g) \subset {\cal D}'(M)$. 
 Elliptic regularity (Theorem 8.3.1 and Corollary 8.3.2 in \cite{hormander}) applied to the elliptic operator $A= L_0-\lambda I$ implies 
that, if $f$ exists, $f$ has  to belong to  $C^\infty(M)$ and also  satisfies (\ref{EQR2}) in classical sense.
As a matter of fact,  $f$  solving (\ref{EQR}) exists because every $\lambda>0$ belongs to the resolvent set of $L'$.
Indeed,   $-L'\geq 0$ (that is true because $-L'$ is the Hilbert-space closure of $-L_0|_{C_c^\infty(M)}$ which is positive for the lemma above)
 entails $\sigma(-L') \subset [0,+\infty)$.
A   solution of  (\ref{EQR}) (which also solves (\ref{EQR2}) and is smooth) therefore exists:
\begin{equation} f = R_\lambda(L')h\label{EQR3}\end{equation}
where $R_\lambda(L'):  L^2(M, \mu_g) \to L^2(M, \mu_g)$ is the resolvent operator of $L'$. \\
Let us pass to prove that $f \in C_0(M) \cap C_b^\infty(M)$ 
when $M$ is not compact (otherwise there is nothing to prove).  We henceforth assume that $M$ is non-compact.
We can say much more about $f$ in (\ref{EQR3}). First of all we observe that  the map ${\cal D}(M) = C_c^\infty(M) \ni h \mapsto R_\lambda(L') =f \in L^2(M; \mu_g)  \subset {\cal D}'(M)$ is sequentially continuous with respect to the natural topologies \cite{hormander} of $C_c^\infty(M)$ and ${\cal D}'(M)$ because $R_\lambda(L')$ is bounded in $L^2(M, \mu_g)$. Therefore we can apply Schwartz' kernel theorem \cite{hormander} that  establishes that there exists a distribution $G \in {\cal D}'(M\times M)$ such that, for every pair $h,h' \in C_c^\infty(M)$,
\begin{equation}\int_M  h'(x)\left(R_\lambda(L')h\right)(x) d\mu_g(x) = \int_{M \times M}   G(x,y) \:h'(x)h(y) \:d\mu_g(x)\otimes d\mu_g(y)\:.\label{intdist}\end{equation}
The integral on the left-hand side is a standard integral, the one on the right-hand side is  just a formal expression accounting for the action of a distribution.
However, Theorem 2.2  in \cite{shubin} (in the case $p=2$) proves that \\
(a) the distribution 
  $G$ is smooth  outside the diagonal, i.e., $G\in C^{\infty}(M\times M\setminus \Delta)$, where $\Delta = \{(x,x) \:|\: x \in M\}$, \\
(b)
there exists $\eta>0$ such that
 for every $\delta >0$ and every pair of multiindices $\alpha,\beta$, there exists $C_{\alpha, \beta, \delta}>0$  with
\begin{equation}
|\partial_x^\alpha \partial_y^\beta G(x,y)| \leq C_{\alpha, \beta, \delta} e^{-\eta d_g(x,y)}\quad \mbox{if $d_g(x,y) \geq\delta$,}\label{stima1}
\end{equation}
where 
$d_g$ is the geodesical distance on $(M,g)$ which is well defined since $M$ is connectedand the derivatives $\partial_x$ and $\partial_y$ are computed in a pair of Riemannian  charts (possibly the same).
Let us 
take $x_0 \not \in supp(h)$ and consider an open neighborhood $U$ of $x_0$ such that $\overline{U}$ is compact and $\overline{U}\cap supp(h) =\emptyset$. Since $U\times supp(h) \ni (x,y) \not \in \Delta$,
if $h' \in C_c^\infty(M)$ is supported in $U$  item (a) above permits us to  intepret litterally the integral on the right-hand side of (\ref{intdist}). Taking advantage of the Fubini theorem, we can rearrange (\ref{intdist})  to 
$$\int_M h'(x) \left(f(x) - \int_M G(x,y) h(y) d\mu_g(y)\right) d\mu_g(x) =0\:.$$
Since $C_c^\infty(U)$ is dense in $L^2(U, d\mu_g )$ and $x_0$ and  $U$ as above  are arbitrary, we can conclude that
\begin{equation}f(x) =  \int_M G(x,y) h(y) d\mu_g(y)\quad \mbox{almost everywhere if $x\not \in supp(h)$.}\label{fintG}\end{equation}
This result can be made even stronger observing that the function $\overline{U}\times supp(h) \ni (x,y) \mapsto G(x,y)h(y)$ is smooth due (a) and thus continuous and bounded. Hence,  a direct use of dominated convergence theorem proves that $$U\ni x \mapsto   \int_M G(x,y) h(y) d\mu_g(y)$$ is continuous as well. Since the left-hand side of (\ref{fintG}) is also continuous, we have  proved that
\begin{equation}f(x) =  \int_M G(x,y) h(y) d\mu_g(y)\quad \mbox{if $x \in M \setminus supp(h)$.}\label{fintG2}\end{equation}
Let us conclude the proof by establishing  that $f$ vanishes at infinity and $||A_kf||_\infty < +\infty$ for $k=0,1,\ldots,r$.
Since $supp(h)$ is compact and the open geodesical balls are a basis of the topology of $M$, there is a finite covering
$\{B_{r_n}(x_n)\}_{n=1,\ldots, N}$ of $supp(h)$ made 
 of closed geodesical balls with finite radius. As a consequence there exist a sufficiently large closed ball $B_R(x_0)$ including $supp(f)$. (It is sufficient to enlarge the radius  $r_0$ of 
$B_{r_0}(x_0)$, to $R:= D+P$ where $D:= \max\{d_g(x_0,x_n)\:|\:  n=0,1, \ldots, N\}$ and $P= \max\{r_n\:|\: n=0,1, \ldots, N\}$.) Notice that  for every closed ball $B_R(x_0)$, with arbitary large $R>0$,  it must hold
$M \setminus B_R(x_0) \neq \emptyset$ necessarily, otherwise $M$ would be compact due to Lemma \ref{lemmaC}
since $M$ is of bounded geometry, and $M$ is not compact by hypothesis. With $\eta>0$ as in (b), choose $\delta>0$ and define  another closed ball $B_{R'}(x_0)$  with $R' > \delta + R$. If $y \in B_R(x_0)$ and $x \in M \setminus B_{R'}(x_0)$ we have
$d_g(x,y) \geq d_g(x,x_0) - R > R'-R > \delta +R -R > \delta$ so that we can use the inequality (\ref{stima1}) with $\alpha=\beta=0$, finding
\begin{equation} |f(x)| \leq  \int_M |G(x,y)|  |h(y)| d\mu_g(y) \leq vol_g(B_R(x_0)) C_\delta ||h||_\infty e^{\eta R} e^{-\eta d_g(x,x_0)}\quad \mbox{if $x \in M \setminus B_{R'}(x_0)$}\label{stima3}\end{equation}
where, for $x \in M \setminus B_{R'}(x_0)$ and $y\in B_R(x_0)$, we took advantage of
$$R+ d_g(x,y) \geq d_g(x,x_0)$$
so that
$$-\eta d_g(x,y) \leq -\eta d_g(x,x_0) + \eta R$$
which implies (\ref{stima3}) through (\ref{stima1}).
To conclude,
with $h, x_0,\eta,\delta, R, R', C_\delta$ fixed as above  and if $||h||_\infty >0$ (otherwise there is nothing to prove since $f=0$), 
for every  $\varepsilon>0$ define
$$R_\varepsilon :=   - \frac{1}{\eta}\log \left( \frac{\varepsilon}{vol_g(B_R(x_0)) C_\delta ||h||_\infty e^{\eta R}}  \right).$$
For every  $\varepsilon>0$ (such small that $R_\varepsilon > R'$),
consider the closed ball $B_{R_\varepsilon}(x_0)$ which is compact in view of Lemma \ref{lemmaC}.
Here,  (\ref{stima3}) yields 
\begin{equation}|f(x)|\leq  vol_g(B_R(x_0)) C_\delta ||h||_\infty e^{\eta R} e^{-\eta R_\varepsilon}    =  \varepsilon\quad  \mbox{if $x \in M \setminus B_{R_\varepsilon}(x_0)$.}\label{stima3333}\end{equation} We have proved that $f \in C_0(M)$.
With a procedure similar to the we used to prove (\ref{fintG2}) based on Lagrange theorem and dominated convergence theorem  proves that in every Riemannian coordinate patch,
\begin{equation}\partial_{x}^\alpha f(x) =  \int_M \partial_{x}^\alpha G(x,y) h(y) d\mu_g(y)\quad \mbox{if $x \in M \setminus supp(h)$.}\label{fintG22}\end{equation}
Every $\partial_{x}^\alpha f$ is necessarily bounded on a finite covering of Riemannian charts of a compact ball $B_{R_\epsilon}$ including $supp(h)$.  Outside $B_{R_\epsilon}$, a procedure similar to that followed to prove (\ref{stima3333}) and relying on (\ref{stima1}) for $\beta=0$  proves that there is a constant  $H_\alpha<+\infty$ such that, in every local  Riemannian coordinate patch on $M$ and for $i=1,\ldots,d$,
\begin{equation}
|\partial_{x}^\alpha f(x)| < H_\alpha\:.\label{fintG23}
\end{equation} We have established that $f \in C_b^\infty(M)$ concluding the proof.  $\hfill \Box$\\

\noindent{\bf Proof of Lemma \ref{lemmaMN}}.
Let us consider $u \in D(M)$ and the map $u(t) := e^{tM}u$ for $t\in [0,+\infty)$. Due to Proposition \ref{ACPsol} (i.e. Proposition 6.2 in \cite{EN1}) $u(t) \in D(M)$ and this map is the unique classical solution of the Cauchy problem associated to $M$ with initial datum $u$. In particular it is continuously differentiable and satisfies $\frac{du}{dt} = Mu(t)$.
Since $M\subset N$, it also satisfies $\frac{du}{dt} = Nu(t)$ and thus, again for Proposition \ref{ACPsol}, it is also the unique solution of the Cauchy problem associated to $N$ with initial datum $u$. That is  $u(t) = e^{tN}u$. We have in particular found that, if $u\in D(M)$, then $e^{tN}u \in D(M)$ for $t\in [0,+\infty)$, so that $D(M)$ is invariant under the semigroup generated by $N$. Proposition 6.2 in \cite{EN1} implies that $D(M)$ is a core for $N$. Since $M\subset N$
and both operators are closed, then $M=N$. $\hfill \Box$\\

\noindent{\bf Proof of Lemma \ref{propL2}}.  Let us denote by  $L''$  the {\em Hilbert-space} closure $\overline{L_0|_{C_c^\infty(M)}}$. We remark that $L_0|_{C_c^\infty(M)}$ is closable since its adjoint has a dense domain, as one can easely prove by a integration-by-parts argument. We write $L''$ in place of $L'$, to stress that the differential  operator $L_0$ 
whose $L''$ is the Hilbert space closure over the domain $C_c^\infty(M)$ now
 includes the perturbation $B$.  The proof, except for a point, is  identical to that of  proposition \ref{teoL} 
using Proposition 4.1  in place of its Corollary 4.2 in \cite{shubin}, observing that elliptic regularity
 works also for $-L''$ since this property  depends only 
on the second order part of $L_0$,  and noticing that the properties  of $G$ established in 
Theorem 2.2 of \cite{shubin}, (\ref{stima1}) in particular, are valid also if $L_0|_{C_c^\infty(M)}$ is not symmetric. The only  new item to prove separately is that
there is a $\lambda>0$ in the resolvent set of $-L''$, which, differently from $-L'$,  is no longer positive and
  selfadjoint  due to the presence of the term $B$. With this result the proof of the thesis concludes.
 Let us prove the existence of such $\lambda >0$ by establishing that $L''$ is the generator of a strongly
continuous semigroup 
in $L^2(M, \mu_g)$ under the hypotheseis (\ref{dominance}): in this case, the standard spectral bound  of generators of strongly continuos 
semigroups  (Corollary 1.13 in \cite{EN1})  implies  that $Re(\sigma(L''))$ has finite upper
 bound so that $\rho(L'') \cap (0,+\infty) \neq \emptyset$ and the requested $\lambda>0$ exists.   In the rest of the proof $-L'$
 will denote again the positive selfadjoint operator 
used in the proof of proposition \ref{teoL}, which is the Hilbert-space closure of    $L_0|_{C_c^\infty(M)}$, where $A_0$ does {\em not}
 contain the perturbation 
$B$. As is known from Proposition 4.1 in \cite{shubin}, $D(L'') = D(L')= W^2_2(M)$ (see \cite{shubin} for the definition of those
 Sobolev spaces on smooth Riemannian manifolds of  bounded geometry).  The operator $B|_{C_c^\infty(M)}$ 
is $L^2(M,\mu_g)$-closable since its adjoint has dense domain (it including $C_c^\infty(M)$) and the closure of $B|_{C_c^\infty(M)}$ has  domain that evidently  includes
$W^2_2(M)$ because $C^\infty_c(M)$ is dense in $W^2_1(M) \supset W^2_2(M)$ \cite{shubin}. We intend to prove that, defining $L'+ \overline{B|_{C_c^\infty(M)}}$ on the domain $W^2_2(M)$ of the first addend, 
then $L'+ \overline{B|_{C_c^\infty(M)}}$ is (i) closed and (ii) it is the generator of a strongly continuous semigroup. Notice that, in this case 
$L'+ \overline{B|_{C_c^\infty(M)}}= L''$ since $L'' \subset L'+ \overline{B|_{C_c^\infty(M)}}$ by construction ($L''$ is the closure of $L_0|_{C_0^\infty}$
whereas the right-hand side is a closed extension of that)
 and the two sides of the 
inclusion have the same domain $W^2_2(M)$. Hence (i) and (ii) imply that $L''$ itself is the generator of a strongly continuous
 semigroup as wanted. To conclude the proof, we prove that (i) and (ii) are true if (\ref{dominance}) holds. 
 Since $\sigma(L')\subset (-\infty,0]$
and $L'$ is selfadjoint, $\{e^{tL'}\}_{t \in [0,+\infty)}$ is an analytic semigroup in $L^2(M, \mu_g)$. To prove (i) and (ii), according to 
Theorem X.54 in \cite{RS2}, it is sufficient to demonstrate that for every $a>0$, there is a corresponding $b>0$ such that (the norm is that of 
$L_2(M, \mu_g)$)
$$||\overline{B|_{C_c^\infty(M)}} \psi || \leq a|| L' \psi|| + b||\psi||\quad \mbox{for all $\psi \in W_2^2(M)$.}$$
Observe that, since $C_c^\infty(M)$ is a core for $L'$(it is essentially selfadjoint thereon) 
 and $\overline{B|_{C_c^\infty(M)}}$ is closed, the condition above is equivalent to
$$||B \psi || \leq a|| L' \psi|| + b||\psi||\quad \mbox{for all $\psi \in C_c^\infty(M)$.}$$
In turn, according to the remark on the condition (iii) on p. 162 of \cite{RS2}, the condition above is equivalent to the next statement:
For every $a>0$ there is $b>0$ such that 
 \begin{equation} ||B \psi ||^2 \leq a|| L' \psi||^2 + b||\psi||^2\quad \mbox{for all $\psi \in C_c^\infty(M)$}\label{lastCC}
\end{equation}
(where these $a,b$ are generally different from those in the previous inequality).
To conclude we prove that (\ref{lastCC}) is consequence of (\ref{dominance}). From the latter, replacing $\xi_k$ with $\nabla^{(g)}_k\psi$, 
if $\psi \in C^\infty_c(M)$, we have
$$\int_M \overline{(B\psi)(x)} (B\psi)(x) d\mu_g(x) \leq c\int_M
 \sum_{i=1}^r \sum_{a,b=1}^d\overline{(A^a_i \nabla^{(g)}_a \psi)(x)}
 (A^b_i \nabla^{(g)}_b \psi)(x) d\mu_g(x)$$ $$=-2c\int_M \overline{\psi(x)}(L'\psi)(x) d\mu_g(x)\:. $$
Namely, if $\langle\cdot,\cdot\rangle$ is the scalar product in $L^2(M,\mu_g)$, standard results of spectral theory
\cite{Moretti,Schm} yield
$$||B\psi||^2 \leq 2c\langle \psi, -L'\psi \rangle =2c\int_{\bR^+} \lambda d\nu_{\psi}(\lambda) $$
where $\nu_\psi(E) = \langle \psi, P^{(-L')}(E)\psi\rangle$,  with $P^{(-L')}$ being is the {\em spectral measure}  of the selfadjoint positive operator 
$-L'$ and $E\subset \bR$ any Borel set. Here observe that, since $c>0$, for every $a>0$ there is $b>0$ such that 
$$2c\lambda \leq a\lambda^2 + b\quad \mbox{for all  $\lambda\geq 0$.}$$
It is in fact sufficient to use $b= c^2/a$.
Therefore, again from standard results of spectral theory,
$$||B\psi||^2 \leq 2c\int_{\bR^+} \lambda d\nu_{\psi}(\lambda)  \leq a\int_{\bR^+} \lambda^2 d\nu_{\psi}(\lambda) + b\int_{\bR^+}1\: d\nu_{\psi}(\lambda)=
 a||-L'\psi||^2 + b||\psi||^2\:. $$
In summary, for every $a>0$, there is $b>0$ such that (\ref{lastCC}) holds
$$ ||B \psi ||^2 \leq a|| L' \psi||^2 + b||\psi||^2\quad \mbox{for all $\psi \in C_c^\infty(M)$,}$$
concluding the proof.
$\hfill \Box$\\

\noindent {\bf Proof of Proposition \ref{teo2}}.

(a) Let us start with a given $r \in (0, I_{(M,g)})$ and
consider a Riemannian system of coordinates in the ball $B^{(M,g)}_r(p)$.
Expanding $g_{ab}(y)$ around $0$ up to the first order with the usual Taylor expansion,  we have
$$g_{ab}(y)=\delta_{ab} + 0 + R^{(2)}_{ab}(y)$$
where, for some $\xi \in B_r(0)$,
$$ R^{(2)}_{ab}(y)= \frac{1}{2!} \sum_{i,j}\frac{\partial^2g_{ab}}{\partial y^i\partial y^j}|_\xi y^iy^j \quad  y \in B_r(0), \quad i,j=1,
\ldots, d\:.$$
Taking  the second bound in (\ref{estimateg}) into account for $k=2$ and using $|y^k|\leq r$ we have
$$\left|||A(y(q))||^2 - ||A(y(q))||^2_g \right|= \left|\sum_{a,b=1}^d A^a(y) g_{ab}(y) A^b(y) -A^a(y)\delta_{ab} A^b(y)\right|  = \left|\sum_{a,b=1}^dA^a(y)  A^b(y) R^{(2)}_{ab}(y)\right|$$
$$\leq \sum_{a,b=1}^d |A^a(y)|  |A^b(y)| \frac{1}{2}C^{(r)}_2d^2r^2\leq \frac{C^{(r)}_2 d^2r^2}{2}\sum_{i,j=1}^d \|A(y)\|  \|A(y)\|  = \frac{C^{(r)}_2d^4 r^2}{2}||A(y)||^2\:.$$
In particular 
$$||A(y(q))||^2 - ||A(y(q))||^2_g  \leq \frac{C^{(r)}_2d^4 r^2}{2}||A(y(q))||^2$$
namely, if $||y||<r$, we have,
$$ \left(1-\frac{C^{(r)}_2d^4 r^2}{2}\right)||A(y)||^2 \leq ||A(y(q))||^2_g \:.$$
Restricting $r$ to $r_0>0$ such that\footnote{It is always possible to find such $r_0$ since the functions $r\mapsto C_k^{(r)}$ are monotone not-decreasing.} $(1-d^4 r_0^2C^{(r_0)}_2/2) >0$ and defining $k_1 :=  (1-d^4 r_0^2C^{(r)}_2/2) ^{-1}$, we conclude that (a) is valid for 
$y \in B_{r_0}(0)$, i.e., $q \in B^{(M,g)}_{r_0}(p)$.\\
(b) Let us first show that, if $r_0>0$ is suitably small, then 
 \begin{equation}\label{quasib} ||T(y(q))||^2 \leq k_2 ||T(q)||^2_g\:, \quad\mbox{for all}\:\:  q \in B^{(M,g)}_{r_0}(p)\end{equation}
for some $k_2 \geq 0$ independent of $T$ and $p$, for every smooth tensor field $T$ of order $(1,1)$. 
The proof is strictly analogous to that of (a), observing that
\begin{equation}\label{quasib2} ||T(y(q))||^2 - ||T(y(q))||_g^2 = \sum_{a,b,i,j=1}^d T_a^i(y) \left(\delta^{ab}\delta_{ij} 
- g^{ab}(y)g_{ij}(y)\right)T_b^j(y)\end{equation}
and
$$ g^{ab}(y)g_{ij}(y) =  \delta^{ab}\delta_{ij}+0 + R^{(2)ab}_{ij}(y)$$
where, for some $\xi \in B_r(0)$,
$$R^{(2)ab}_{ij}(y)= \frac{1}{2!} \sum_{i,j}\frac{\partial^2 g^{ab}g_{ij}}{\partial y^i\partial y^j}|_\xi y^iy^j \quad  y \in B_r(0), \quad i,j=1,
\ldots, d\:,$$
Using in (\ref{quasib2}) both the second bound in (\ref{estimateg}) and (\ref{estimateg2}) for $k=0,1$ as we did  in the proof  (a) we obtain (\ref{quasib}).
To conclude the proof of (b), observe that, if $y \in B_{r_0}(0)$,
$$\partial_{y^a}A^i = (\nabla^{(g)}_a A)^i - \sum_{c=1}^d \Gamma^i_{ac} A^c$$
so that, using  (\ref{estimateg3}) toghether with rough estimates $|A^i| \leq ||A||$, $|\nabla^{(g)}_a A^i| \leq \|\nabla^{(g)} A\|$, we have
$$\|\nabla A\|^2 \leq \| \nabla^{(g)} A\|^2 + 2d^3  J_{0}^{(r_0)} \|A\| \|\nabla^{(g)} A\| + d^4 (J_{0}^{(r_0)})^2 \|A\|^{2}.$$
 Finally observe that (a) and (\ref{quasib}) respectively imply
$$\|A\| \leq k_1\|A\|_g\quad \mbox{and}\quad  \|\nabla^{(g)} A\| \leq \sqrt{k_2}  \| \nabla^{(g)} A \|_g$$
which, inserted in the previous inequality, yield
$$\|\nabla A(y(q))\|^2 \leq k_2\| \nabla^{(g)} A(q)\|_g^2 + 2d^3  J_{0}^{(r_0)} k_1\sqrt {k_2} \|A(q)\|_g \|\nabla^{(g)} A(q)\|_g + d^4 (J_{0}^{(r_0)})^2
 k_1^2\|A(q)\|^2_g$$
which must hold if $q \in B^{(M,g)}_{r_0}(p)$. By construction, the constants, $k_1$, $k_2$, $k_3 := d^4(J_{0}^{(r_0)})^2k_1^2$, and  $k_4 := 2d^3 J_{0}^{(r_0)} k_1\sqrt {k_2}$ do not depend on $A$ and the estimate is valid for every $p\in M$ provided $q \in B^{(M,g)}_{r_0}(p)$.  $\hfill \Box$\\

\end{document}